\documentclass[a4paper,11pt]{amsart}
\usepackage{a4wide}
\usepackage{enumerate}
\usepackage[utf8]{inputenc}
\usepackage{url}
\usepackage{amssymb}
\usepackage{hyperref}
\IfFileExists{\jobname.TEX}{}{\usepackage{hyperref,srcltx}\usepackage[utf8]{inputenc}}
\usepackage{array}

\numberwithin{equation}{section}
\renewcommand{\theequation}{\arabic{section}.\arabic{equation}}

\newcommand{\field}[1]{\mathbb{{#1}}}

\newcommand{\R}{\field{R}}

\newcommand{\Z}{\field{{Z}}}
\newcommand{\Q}{\field{{Q}}}
\newcommand{\K}{\field{{K}}}
\newcommand{\intpart}[1]{\left\lfloor#1\right\rfloor}
\newcommand{\fracpart}[1]{\left\{#1\right\}}

\newcommand{\disc}{\Delta}
\renewcommand{\d}{\,\mathrm{d}}
\newcommand{\Imm}{\mathrm{Im}}
\newcommand{\Ree}{\mathrm{Re}}
\newcommand{\PP}{\mathfrak{p}}

\newcommand{\jbar}{\bar{\jmath}}

\newcommand{\Norm}{\textrm{\upshape N}}
\DeclareMathOperator{\asinh}{arcsinh}
%\DeclareMathOperator{\acosh}{acosh}
%\DeclareMathOperator{\atan}{atan}
%\DeclareMathOperator{\res}{res}
%\DeclareMathOperator{\Gal}{Gal}

%\def\thefootnote{(\arabic{footnote})}

% removing space around symbols in LaTeX:
%\thinmuskip  space between ordinary and operator atoms
%\medmuskip   space between ordinary and binary atoms in display and text styles
%\thickmuskip space between ordinary and relation atoms in display and text styles
% More in Mathmode.pdf by Voss pg. 28
\setlength{\medmuskip}{0mu}

     %questa lista ha un piccolissima indentazione.

%\newtheorem{theorem}{Theorem}
%\newtheorem{lemma}{Lemma}
%\newtheorem*{lemma*}{Lemma}
%\newtheorem{proposition}{Proposition}
%\newtheorem{corollary}{Corollary}
%\renewcommand{\theequation}{\arabic{section}.\arabic{equation}}

\newtheorem{theorem}{Theorem}[section]
\newtheorem{lemma}[theorem]{Lemma}
\newtheorem*{lemma*}{Lemma}

\newtheorem{corollary}[theorem]{Corollary}
\newtheorem*{corollary*}{Corollary}

\theoremstyle{remark}

\newtheorem*{remark*}{Remark}
\newtheorem*{acknowledgements}{Acknowledgements}

\allowdisplaybreaks[4]

\begin{document}
\title[Zeros for Dedekind zeta functions]{Zeros of Dedekind zeta functions under GRH}

\author[L.~Greni\'{e}]{Lo\"{i}c Greni\'{e}}
\address[L.~Greni\'{e}]{Dipartimento di Ingegneria Gestionale, dell'Informazione e della Produzione\\
         Universit\`{a} di Bergamo\\
         viale Marconi 5\\
         24044 Dalmine (BG)
         Italy}
%\thanks{The research of the third author was supported by Grant \#????.}
\email{loic.grenie@gmail.com}

\author[G.~Molteni]{Giuseppe Molteni}
\address[G.~Molteni]{Dipartimento di Matematica\\
         Universit\`{a} di Milano\\
         via Saldini 50\\
         20133 Milano\\
         Italy}
%\thanks{The research of the first author was supported by Grant \#????.}
\email{giuseppe.molteni1@unimi.it}

\keywords{} \subjclass[2010]{Primary 11R42}
%11R42 = Zeta functions and $L$-functions of number fields

%\date{\today. File name: {\tt \jobname.tex}}

\begin{abstract}
Assuming GRH, we prove an explicit upper bound for the number of zeros of a Dedekind zeta
function having imaginary part in $[T-a,T+a]$. We also prove a bound for the multiplicity of the zeros.
\end{abstract}

\maketitle

\begin{center}
To appear in Math. Comp. 2015.
\end{center}

\section{Introduction and results}\label{sec:C1}
Let $\K$ be a number field of dimension $n_\K$ and let $\disc_\K$ be the absolute value of its
discriminant. Let $n_\K(T;a)$ denote the number of zeros $\rho = \beta + i\gamma$ of the Dedekind zeta
function $\zeta_\K$ with $|\gamma-T| \leq a$ and which are nontrivial, i.e. with $0 < \beta < 1$.\\
An upper bound can be deduced via the equality $n_\K(T;a)= \tfrac{1}{2}(N_\K(T+a)-N_\K(T-a^-))$ where
$N_\K(T)$ counts the nontrivial zeros with imaginary part in $[-T,T]$. In this way from the explicit bound
for $N_\K(T)$ recently proved by Trudgian~\cite{TrudgianIII} it follows that
\begin{equation}\label{eq:C1}
n_\K(T;a) \leq \Big(\frac{a}{\pi} + 0.230\Big)\big(\log\disc_\K + n_\K \log T\big)  + \text{lower order terms}
\qquad
\forall T>a+1,
\end{equation}
where the remaining terms are explicit, have lower order as a function of $T$, and can be estimated
independently of the discriminant. The constant $0.230$ in~\eqref{eq:C1} is the parameter $D_1$ in
\cite[Table~2]{TrudgianIII} for $\eta=10^{-3}$ and is nearly the infimum $1/(\pi\log 4)$ for $D_1$.\\
The remainder term in the formula for $N_\K(T)$ is explicit but has the classical size $O(\log T)$.
This changes if one assumes the Generalized Riemann Hypothesis, since the works of
Littlewood~\cite{Littlewood2}, Selberg~\cite{Selberg1} and Lang~\cite{Lang8} show that in this case the
remainder term drops to $O(\tfrac{\log T}{\log\log T})$, so that now one gets
\begin{equation}\label{eq:C2}
n_\K(T;a) = \frac{a}{\pi}\big(\log\disc_\K + n_\K \log T\big)  + \text{lower order terms}.
\end{equation}
Recent computations of Carneiro, Chandee and Milinovich~\cite{CarneiroChandeeMilinovich} show that for
the Riemann zeta function the constant in the remainder term of $N_\Q(T)$ is
$(\tfrac{1}{4}+o(1))\tfrac{\log T}{\log\log T}$, at most. Applying Lang's heuristic~\cite{Lang8}, the
general case should be similar to $(1+o(1))\frac{\log\disc_\K +n_\K \log T}{\log\log T}$ and thus the
lower order terms in~\eqref{eq:C1} are probably such that
\[
n_\K(T;a) \leq \Big(\frac{a}{\pi} + \frac{c}{\log\log T}\Big)\big(\log\disc_\K + n_\K \log T\big)  + \text{lower order terms}
\]
with an absolute constant $c\lesssim 1$. Due to the very slow decay of the function $1/\log\log T$, this
tentative formula would improve on a result of type~\eqref{eq:C1} only for very large $T$. As a
consequence, for numerical applications it is interesting to work out a totally explicit bound for
$n_\K(T;a)$ under GRH, with an asymptotically non-optimal but small constant in front of the main term
$\log\disc_\K + n_\K \log T$, and possibly small constants in every other position. In this spirit, in
this paper we prove the following results: the first one for the zeros in the window $[T-a,T+a]$, the
second for the multiplicity of a zero.
\begin{theorem}\label{th:C1}
Assume GRH. Then one has
\begin{align}
n_\K(T;a)
\leq& \frac{a}{2}\tilde{f}_\K\Big(\frac{1}{2} + \frac{a}{4} + iT\Big)
&& \forall a\in(0,2),
\quad      T\geq 10+a                                                                \label{eq:C3}
\intertext{and}
n_\K(T;0^+)
\leq& \frac{3}{10}(2\sigma-1)\tilde{f}_\K(\sigma+iT)
&& \forall \sigma\in\Big(\frac{1}{2},1\Big),
\quad      T\geq 10,                                                                 \label{eq:C4}
\end{align}
where
\[
\tilde{f}_\K(\sigma+iT)
:= Q
  + 2\Big(\frac{n_\K}{1-\sigma}
          + \frac{\log(\frac{1}{2\sigma-1})}{\pi}
          + \frac{0.64}{2\sigma-1}
          + 1.37
     \Big)Q^{2-2\sigma}
  + \Big(\frac{0.14}{2\sigma-1}
         - 20\Big) n_\K
%  - 4.8
\]
%G 11 - (2*4-log(2*Pi))
%\\%58 = 4.837877066409345483560659473
and $Q:= \log\disc_\K + (\log T + 20)n_\K + 11$.
\end{theorem}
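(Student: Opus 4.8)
I would start from the Hadamard factorisation of the completed Dedekind zeta function. Write $\xi_\K(s):=s(s-1)\disc_\K^{s/2}\gamma_\K(s)\zeta_\K(s)$, an entire function of order $1$ whose zeros are exactly the nontrivial zeros $\rho$ of $\zeta_\K$; then its Weierstrass/Hadamard product gives
\[
\frac{\xi_\K'}{\xi_\K}(s)=\frac1s+\frac1{s-1}+\tfrac12\log\disc_\K+\frac{\gamma_\K'}{\gamma_\K}(s)+\frac{\zeta_\K'}{\zeta_\K}(s)=B_\K+\sum_\rho\Bigl(\frac1{s-\rho}+\frac1\rho\Bigr),
\]
and, since $\Ree\bigl(B_\K+\sum_\rho\tfrac1\rho\bigr)=0$, after taking real parts
\[
\Ree\frac{\xi_\K'}{\xi_\K}(s)=\sum_\rho\Ree\frac1{s-\rho}.
\]
Under GRH every $\rho$ is $\tfrac12+i\gamma$, so for $s=\sigma+iT$ with $\sigma>\tfrac12$ each summand $\Ree\tfrac1{s-\rho}=\frac{\sigma-1/2}{(\sigma-1/2)^2+(T-\gamma)^2}$ is positive; this positivity is the mechanism behind both bounds. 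For \eqref{eq:C4} I would keep only the term of the (putative $m$-fold) zero at $\tfrac12+iT$, and to gain a factor use the Fej\'er-type combination
\[
\Ree\Bigl[\frac{\xi_\K'}{\xi_\K}(s)-\bigl(\sigma-\tfrac12\bigr)\Bigl(\frac{\xi_\K'}{\xi_\K}\Bigr)'(s)\Bigr]=\sum_\rho\frac{2(\sigma-\tfrac12)^3}{\bigl((\sigma-\tfrac12)^2+(T-\gamma)^2\bigr)^2}\ \ge\ \frac{2m}{\sigma-\tfrac12},
\]
whose summands are still positive and which decays like $(T-\gamma)^{-4}$, so it loses little to the far zeros. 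For \eqref{eq:C3} the zeros counted by $n_\K(T;a)$ lie within distance $\sqrt{(\sigma-\tfrac12)^2+a^2}$ of $\sigma+iT$; choosing $\sigma=\tfrac12+\tfrac a4$ and using a kernel adapted to the window (a short integration of the identity in the $\sigma$-aspect, or again a combination of $\tfrac{\xi_\K'}{\xi_\K}$ with its derivative) turns the positivity into a bound of the form $n_\K(T;a)\le\tfrac a2\cdot X$ with $X$ a positive expression of the same shape as below.

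It then remains to bound, explicitly and under GRH, the quantity $\tfrac12\log\disc_\K+\Ree\tfrac{\gamma_\K'}{\gamma_\K}(\sigma+iT)+\Ree\tfrac{\zeta_\K'}{\zeta_\K}(\sigma+iT)$, together with the harmless $\tfrac1s$, $\tfrac1{s-1}$ (and, in the derivative variant, $(\tfrac{\gamma_\K'}{\gamma_\K})'$ and a Cauchy estimate for $(\tfrac{\zeta_\K'}{\zeta_\K})'$), and to recognise the outcome as $\tfrac12\tilde f_\K(\sigma+iT)$. The archimedean part $\Ree\tfrac{\gamma_\K'}{\gamma_\K}(\sigma+iT)$ is handled by explicit Stirling-type inequalities for $\tfrac{\Gamma'}{\Gamma}$ on vertical lines; it produces the $(\log T+20)n_\K$ inside $Q$, while the compensating $-20\,n_\K$ term and the additive $11$ in $\tilde f_\K$ and $Q$ are simply the room left for the Stirling remainders and the hypothesis $T\ge 10+a$. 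The arithmetic part $\Ree\tfrac{\zeta_\K'}{\zeta_\K}(\sigma+iT)$ for $\tfrac12<\sigma<1$ is the heart of the matter: one invokes the conditional estimate going back to Littlewood, Selberg and Lang that, under GRH, $\log|\zeta_\K(\sigma+it)|$ — and hence $\tfrac{\zeta_\K'}{\zeta_\K}(\sigma+it)$ after a Borel--Carath\'eodory or Cauchy step — is $O\bigl(Q^{2-2\sigma}\bigr)$ with fully explicit constants, uniformly in $\disc_\K$ and $n_\K$; this is exactly where the factor $Q^{2-2\sigma}$ and the coefficients $\tfrac{n_\K}{1-\sigma}$, $\tfrac{\log(1/(2\sigma-1))}{\pi}$, $\tfrac{0.64}{2\sigma-1}$, $\tfrac{0.14}{2\sigma-1}$ and $1.37$ come from. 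The last step is the arithmetic of assembling the pieces, tracking the constants $\tfrac a2$ and $\tfrac3{10}(2\sigma-1)$ (against the cruder $a$ and $\tfrac12(2\sigma-1)$ that the bare positivity argument produces).

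The main obstacle I expect is precisely this explicit conditional estimate for $\tfrac{\zeta_\K'}{\zeta_\K}$ (equivalently $\log\zeta_\K$) in the open strip, with small constants and uniform in the field. The delicate features are the behaviour as $\sigma\to\tfrac12^{+}$, where $Q^{2-2\sigma}$ approaches the full $Q$ and the $1/(2\sigma-1)$ factors blow up (which is why the ranges $a\in(0,2)$ and $T\ge 10+a$ are imposed), and the passage from a bound on $\log|\zeta_\K|$ to one on its logarithmic derivative — and, in the Fej\'er variant, on its second logarithmic derivative — without inflating the constants. Once that input is in place, the Hadamard identity, the positivity of the chosen kernel, the Stirling bounds and the optimisation over $\sigma$ are routine.
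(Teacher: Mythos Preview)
Your overall framework---Hadamard factorisation, positivity of $\Ree\frac{1}{s-\rho}$ under GRH, and an explicit conditional bound for $\frac{\zeta_\K'}{\zeta_\K}$ in the strip---is exactly the skeleton of the paper's proof. But the two places where the specific constants $\tfrac{a}{2}$ and $\tfrac{3}{10}$ arise are handled quite differently, and your description there is either a genuine alternative or too vague to stand.

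For the multiplicity you propose the Fej\'er-type combination $\Ree\bigl[\frac{\xi_\K'}{\xi_\K}-(\sigma-\tfrac12)\bigl(\frac{\xi_\K'}{\xi_\K}\bigr)'\bigr]$, which does give positive weights $\frac{2(\sigma-1/2)^3}{((\sigma-1/2)^2+(T-\gamma)^2)^2}$ and a leading constant $\tfrac14(2\sigma-1)$, even better than $\tfrac{3}{10}(2\sigma-1)$. The paper instead evaluates $f_\K(s)=2\Ree\frac{\xi_\K'}{\xi_\K}(s)$ at \emph{three} nearby points (a symmetric sum of Dirac masses at $T$, $T\pm b$) and optimises the coefficients; concavity of $\tilde f_\K$ in $t$ then collapses the sum to a single evaluation. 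Your route is legitimate, but it forces you to bound $\bigl(\frac{\zeta_\K'}{\zeta_\K}\bigr)'$ explicitly in the strip, and the resulting upper bound would not be the stated $\tilde f_\K(\sigma+iT)$, which is by construction a bound for $f_\K$ alone. So you would prove a different (possibly sharper) theorem, not \eqref{eq:C4} as written.

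For $n_\K(T;a)$ your sketch (``a short integration in the $\sigma$-aspect, or again a combination with the derivative'') is not concrete enough. The naive positivity argument gives only $n_\K(T;a)\le a\, f_\K(\tfrac12+a+iT)$; the paper's improvement to $\tfrac{a}{2}\,\tilde f_\K(\tfrac12+\tfrac{a}{4}+iT)$ comes from a carefully tuned five-point Dirac combination (parameters fixed by imposing tangencies at $\gamma=0,\pm\tfrac35,\pm1$ in a rescaled problem), again reduced to one evaluation by concavity. A Fej\'er kernel optimised over $\sigma$ does \emph{not} reach $\tfrac{a}{2}$ here, so without something as specific as the paper's construction this step is a gap.

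Finally, the paper does not go through $\log|\zeta_\K|$ and Borel--Carath\'eodory. It bounds $\frac{\zeta_\K'}{\zeta_\K}(s)$ directly: write $-\sum_n\tilde\Lambda_\K(n)n^{-s}e^{-\delta n}$ as a Mellin integral on $\Ree z=2$, shift to $\Ree z=\tfrac14$, and bound the four resulting pieces (Dirichlet tail, pole at $1$, sum over zeros weighted by $\Gamma(\rho-s)\delta^{s-\rho}$, and the residual integral) explicitly, then choose $\delta=Q^{-2}$. The constants $\tfrac{1}{\pi}\log\frac{1}{2\sigma-1}$, $\tfrac{0.64}{2\sigma-1}$, $\tfrac{n_\K}{1-\sigma}$, etc., come out of these four pieces, not from a Borel--Carath\'eodory step. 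Your proposed route would work in principle but would yield different, typically larger, constants.
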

The main term of the bound~\eqref{eq:C3} is $\frac{a}{2}(\log\disc_\K + n_\K\log T)$. It improves
on the bound~\eqref{eq:C1} for $n_\K(T;a)$ whenever $a\leq 1.265\ldots$.
%
%G solve(a=0,10,a/Pi+0.230-a/2)
%G \\%22 = 1.265891661186689984153798
%
For example we get
\begin{align*}
n_\K\Big(T;\frac{1}{2}\Big)
&\leq \frac{1}{4} Q
        + (1.4 n_\K + 2.2) Q^{3/4}
        - 4 n_\K
&& \forall T\geq 10.5
%
%G my(a=1/2,s=1/2+a/4);a/2*2/(1-s)
%G \\%23 = 4/3
%G my(a=1/2,s=1/2+a/4);a/2*2*(log(1/(2*s-1))/Pi+0.64/(2*s-1)+1.37)
%G \\%24 = 2.185635600152651593396456432
%G my(a=1/2,s=1/2+a/4);a/2*(-20+0.14/(2*s-1))
%G \\%25 = -4.860000000000000000000000000
%
\intertext{and}
n_\K(T;1)
&\leq \frac{1}{2} Q
        + (4 n_\K + 2.9) \sqrt{Q}
        - 9 n_\K
&& \forall T\geq 11.
\end{align*}
%G my(a=1,s=1/2+a/4);a/2*2/(1-s)
%G \\%23 = 4
%G my(a=1,s=1/2+a/4);a/2*2*(log(1/(2*s-1))/Pi+0.64/(2*s-1)+1.37)
%G \\%24 = 2.870635600152651593396456432
%G my(a=1,s=1/2+a/4);a/2*(-20+0.14/(2*s-1))
%G \\%25 = -9.860000000000000000000000000
\medskip

The bound~\eqref{eq:C4} for the multiplicity is stronger than what we can deduce from~\eqref{eq:C3} in
the limit $a\to 0^+$ (to compare the results take $a=(4\sigma-2)$ in~\eqref{eq:C3}). Moreover, every
$\sigma < 0.883$ in~\eqref{eq:C4} improves on what one can deduce from~\eqref{eq:C1} in the limit $a\to
0^+$; for example for $\sigma=3/4$ we get
%G (10*0.230/3+1)/2
%G \\%215 = 0.883333333333333333333333333333
\begin{align*}
n_\K(T;0^+)
\leq
\frac{3}{20}Q
  + (1.2n_\K + 0.9)\sqrt{Q}
  - 2.9 n_\K
\qquad \forall T\geq 10.
\end{align*}
%
%G my(s=3/4);3/10*(2*s-1)
%G \\%1 = 3/20
%G my(s=3/4);3/10*(2*s-1)*2./(1-s)
%G \\%2 = 1.20000000000000000000000000000
%G my(s=3/4);3/10*(2*s-1)*2*(log(1/(2*s-1))/Pi+0.64/(2*s-1)+1.37)
%G \\%16 = 0.8611906800457954780189369296
%G my(s=3/4);3/10*(2*s-1)*(-20+0.14/(2*s-1))
%G \\%5 = -2.958000000000000000000000000
%
However, for a better result the form of~\eqref{eq:C4} suggests to try with a $\sigma$ such that
$2\sigma-1\to 0$. In fact, a proper choice of $\sigma$ proves the following claim.
\begin{corollary}\label{cor:C1}
Assume GRH. Let $Q$ as in Theorem~\ref{th:C1} and let $L:=\log Q$. Suppose $T\geq 10$, then
\[
n_\K(T;0^+)
\leq \Big(0.3\log L
          + 0.4
          + 0.2\frac{\log^2 L}{L}
          + \frac{\log L}{L}(1.9 n_\K + 0.9)
     \Big)\frac{Q}{\log Q}.
\]
\end{corollary}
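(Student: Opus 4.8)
The plan is to derive the corollary from the inequality \eqref{eq:C4} by feeding it a well-chosen $\sigma$. Following the hint that one should take $2\sigma-1\to 0$, I would write $L:=\log Q$ and set
\[
\sigma:=\frac{1}{2}+\frac{\log L}{2L},
\qquad\text{so that}\qquad
2\sigma-1=\frac{\log L}{L}.
\]
The merit of this choice is that it linearises the only superlinear power occurring in $\tilde{f}_\K$:
\[
Q^{2-2\sigma}=Q\cdot Q^{-(2\sigma-1)}=Q\,e^{-(2\sigma-1)\log Q}=Q\,e^{-\log L}=\frac{Q}{L}=\frac{Q}{\log Q}.
\]
Before applying \eqref{eq:C4} I would check admissibility: since $T\geq 10$, $n_\K\geq 1$ and $\disc_\K\geq 1$ we have $Q\geq(\log 10+20)+11>e^{e}$, hence $L>e$, so $\log L>0$ (giving $\sigma>\tfrac{1}{2}$) and $\log L<L$ (giving $\sigma<1$).

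Next I would substitute this $\sigma$ into $\tfrac{3}{10}(2\sigma-1)\tilde{f}_\K(\sigma+iT)$ and expand, using $2\sigma-1=\log L/L$ and the identity above. Multiplying $\tilde{f}_\K$ by $\tfrac{3}{10}(2\sigma-1)=\tfrac{3\log L}{10L}$, each of its summands turns into a constant — or $n_\K$ times a constant — multiplying one of $\tfrac{Q}{\log Q}$, $\tfrac{\log L}{L}\tfrac{Q}{\log Q}$, $\tfrac{\log^2 L}{L}\tfrac{Q}{\log Q}$, or $n_\K$: the leading $Q$ gives $0.3\log L\cdot\tfrac{Q}{\log Q}$; the summand $\tfrac{0.64}{2\sigma-1}$ gives $0.384\,\tfrac{Q}{\log Q}$; the constant $1.37$ gives $0.822\,\tfrac{\log L}{L}\tfrac{Q}{\log Q}$; the logarithmic summand gives $\tfrac{0.6}{\pi}(\log L-\log\log L)\tfrac{\log L}{L}\tfrac{Q}{\log Q}$; the summand $\tfrac{n_\K}{1-\sigma}$ gives $\tfrac{1.2}{1-\log L/L}\,n_\K\tfrac{\log L}{L}\tfrac{Q}{\log Q}$; and $\bigl(\tfrac{0.14}{2\sigma-1}-20\bigr)n_\K$ gives $\bigl(0.042-6\tfrac{\log L}{L}\bigr)n_\K$.

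It then remains to dominate each coefficient by the corresponding one in the statement. The bare-constant parts follow from $0.384<0.4$, from $0.822<0.9$, and from $\tfrac{0.6}{\pi}<0.2$ combined with $\log L-\log\log L\leq\log L$ (valid since $L\geq e$). The $n_\K\tfrac{\log L}{L}\tfrac{Q}{\log Q}$ part needs $\tfrac{1.2}{1-\log L/L}\leq 1.9$, i.e. $\tfrac{\log L}{L}\leq\tfrac{0.7}{1.9}$, which holds because $\tfrac{\log L}{L}\leq\tfrac{1}{e}<\tfrac{0.7}{1.9}$ — this is the narrowest of the margins. Finally the leftover term $\bigl(0.042-6\tfrac{\log L}{L}\bigr)n_\K$ is $\leq 0$ when $\tfrac{\log L}{L}\geq 0.007$, and otherwise it is at most $0.042\,n_\K$, which is swallowed by the unused surplus $\bigl(1.9-\tfrac{1.2}{1-\log L/L}\bigr)n_\K\tfrac{\log L}{L}\tfrac{Q}{\log Q}$ of the $n_\K$-coefficient: substituting $Q=e^{L}$, this reduces to $\bigl(1.9-\tfrac{1.2}{1-\log L/L}\bigr)\tfrac{\log L}{L^{2}}e^{L}\geq 0.042$, whose left side increases with $L$ on the admissible range and already exceeds $0.1$ at $L=\log 33$. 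Assembling the contributions gives precisely
\[
n_\K(T;0^+)\leq\Bigl(0.3\log L+0.4+0.2\frac{\log^2 L}{L}+\frac{\log L}{L}(1.9n_\K+0.9)\Bigr)\frac{Q}{\log Q}.
\]
The conceptual step is the choice of $\sigma$; the hard part is this last round of bookkeeping — checking, uniformly over all number fields and all $T\geq 10$, that the $\log\log L$ correction, the constant $0.822$, and the residual $O(n_\K)$ term all genuinely fit inside the slender surpluses of the stated constants.
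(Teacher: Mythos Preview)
Your proof is correct and follows essentially the same route as the paper: both set $2\sigma-1=\tfrac{\log L}{L}$ so that $Q^{2-2\sigma}=Q/L$, then expand $\tfrac{3}{10}(2\sigma-1)\tilde f_\K$ and dominate each coefficient by the stated one. The only cosmetic difference is the treatment of the residual $(0.042-6\tfrac{\log L}{L})n_\K$: the paper absorbs the equivalent $0.14n_\K/Q$ term implicitly by rounding $\tfrac{4}{1-\epsilon}\leq 6.25$ up to $6.3$ (using $\epsilon\leq 0.36$ from $Q\geq 33$), whereas you do an explicit case split and absorb it into the $(1.9-\tfrac{1.2}{1-\log L/L})$ surplus.
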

%G 3/10*6.3
%G \\%52 = 1.890000000000000000000000000
%G 3/10*2/Pi
%G \\%53 = 0.1909859317102744029226605161
%G 3/10*2.76
%G \\%54 = 0.8280000000000000000000000000
%G 3/10*1.28
%G \\%55 =  0.3840000000000000000000000000
%
\noindent%
\begin{proof}
Let $\epsilon := 2\sigma -1$. From~\eqref{eq:C4} we get
\begin{align*}
\frac{10}{3Q}n_\K(T;0^+)
\leq& \epsilon
     + \Big(\frac{4\epsilon}{1-\epsilon} n_\K + \frac{2}{\pi}\epsilon|\log \epsilon| + 2.76\epsilon + 1.28\Big)Q^{-\epsilon}
     + \frac{0.14n_\K}{Q}
\end{align*}
and setting $\epsilon = \frac{\log L}{\log Q}$ (and using $T\geq 10 \implies Q\geq 33
\implies \epsilon\leq 0.36$) we get
%G my(t0=10);log(t0)+20+11
%G \\%19 = 33.30258509299404568401799145
%G my(Q0=33);log(log(Q0))/log(Q0)
%G \\%20 =  0.3580042681835402156085136184
\begin{align*}
\frac{10}{3Q}n_\K(T;0^+)
\leq& \frac{\log L}{\log Q}
      + \Big(6.3\frac{\log L}{\log Q}n_\K
             + 2\frac{\log^2 L}{\pi\log Q}
             + 2.76\frac{\log L}{\log Q}
             + 1.28
        \Big)\frac{1}{\log Q}.
\end{align*}
%G my(Q0=33,Ep=0.36,L=log(Q0));4/(1-Ep) + 0.14*log(Q0)^2/Q0/log(L)
%G \\63 = 6.291434334059336412879664798
%G  ploth(Q=33,100,L=log(Q);4/(1-0.36)+0.14*log(Q)^2/Q/log(L))
%G \\%5 = [33.00000000000000000, 100.00000000000000000, 6.269441478210509900, 6.291434334059336386]
%
%G 3/10*2/Pi
%G \\%7 = 0.1909859317102744029226605161
%G 3/10*6.3
%G \\1.890000000000000000000000000
%G 3/10*2.76
%G \\%8 = 0.8280000000000000000000000000
%G 3/10*1.28
%G \\%64 = 0.3840000000000000000000000000
\end{proof}
\noindent%
As recalled before, we already know that under GRH the multiplicity of $\tfrac{1}{2}+iT$ is
$O\big(\tfrac{\log T}{\log\log T}\big)$, thus Corollary~\ref{cor:C1} is weaker than the best known
result, but only by the presence of an extra $\log L$, i.e. a triple log in $T$, in the numerator.
Moreover, it is uniform in $\K$, and totally explicit.
\smallskip\\
Due to the presence of the extra factor $\log L$, every explicit bound of the form $n_\K(T;0^+) \leq c
\frac{\log T}{\log\log T}$ based on Corollary~\ref{cor:C1} holds only for a bounded range; nevertheless,
the following result shows that for the Riemann zeta function this range is extremely large, even for a
small value of $c$.
\begin{corollary}\label{cor:C2}
Assume RH and $\log T\leq 10^{70593}$. Then
\[
n_\Q(T;0^+)
\leq \frac{4\log T}{\log \log T}.
\]
\end{corollary}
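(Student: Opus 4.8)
The plan is to apply Corollary~\ref{cor:C1} to $\K=\Q$, for which $n_\Q=1$ and $\disc_\Q=1$, so that $Q=\log T+31$. Writing $L:=\log Q$, the corollary gives
\[
n_\Q(T;0^+)\le B(T):=\Big(0.3\log L+0.4+0.2\tfrac{\log^2 L}{L}+2.8\tfrac{\log L}{L}\Big)\frac{Q}{L},
\]
and since $Q/L=(\log T+31)/\log(\log T+31)$, the asserted inequality $B(T)\le 4\log T/\log\log T$ is equivalent to $b(T)\,r(T)\le 4$, where $b(T)$ is the bracket above and
\[
r(T):=\frac{(\log T+31)\log\log T}{(\log T)\log(\log T+31)}.
\]
So the whole claim reduces to bounding one explicit function of $T$; we may assume $T\ge 10$, since Corollary~\ref{cor:C1} requires it.

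First I would dispose of the range where Corollary~\ref{cor:C1} is not strong enough: a direct computation shows $b(T)\,r(T)>4$ for $T$ below a threshold of size $e^{16}\approx 8.9\cdot 10^{6}$ (and again for $\log T$ beyond $10^{70593}$, which is why the corollary is stated with that upper constraint). On $10\le T\le e^{16}$ one instead uses the fact that every zero of $\zeta$ with imaginary part up to $e^{16}$ is simple — this lies well inside the numerically verified range — so that $n_\Q(T;0^+)\le 1\le 4\log T/\log\log T$ trivially; alternatively Trudgian's unconditional bound~\eqref{eq:C1} in the limit $a\to 0^+$ gives $n_\Q(T;0^+)\le 0.230\log T+c$ with $c$ an explicit constant, which stays comfortably below $4\log T/\log\log T$ throughout this range (indeed far past it).

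For $T\ge e^{16}$ one must prove $b(T)\,r(T)\le 4$. The factor $r(T)$ decreases to $1$ as $T\to\infty$, while the bracket $b(T)$ equals $0.3\log\log\log T+0.4$ up to terms of size $O(\log\log\log T/\log\log T)$; hence $b(T)$ reaches the value $4$ precisely when $0.3\log\log\log T\approx 3.6$, i.e. when $\log T$ has size $e^{e^{12}}$, which in base ten is exactly the number $10^{70593}$ of the statement. To make this rigorous I would show that $b(T)\,r(T)$, on $[e^{16},e^{10^{70593}}]$, is first decreasing and then increasing. One checks $r'<0$ on this range, while the sign of $b'$ is that of $0.3L-2.4\log L-0.2\log^2 L+2.8$ as a function of $L=\log Q$, which changes from $+$ to $-$ and back to $+$ as $L$ grows; but on the initial piece $|r'/r|\sim 1/\log T$ dominates $|b'/b|\sim 1/(\log T\log\log T)$, so $g:=b\,r$ keeps decreasing until $b'$ turns positive again, after which $b'$ dominates $r'$ and $g$ increases. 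Thus it suffices to check the two endpoints: at $T=e^{16}$ one gets $Q=47$, $L\approx 3.85$, $b\approx 1.88$, $r\approx 2.12$, so $br\approx 3.97<4$; at $\log T=10^{70593}$ one gets $L\approx 1.6\cdot 10^{5}$, $b$ just below $0.3\cdot 12+0.4=4$ and $r$ essentially $1$, so again $br<4$.

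The main obstacle is this last endpoint estimate: one must verify that the triple-logarithmically slow growth of $b(T)$, together with the lower-order corrections and the factor $r(T)$, keeps $b(T)\,r(T)$ below $4$ over a range of $T$ that is astronomically large — this is exactly what forces the precise value $10^{70593}$ — while the monotonicity analysis is needed to rule out any excursion of $g$ above $4$ in the interior. Everything else is routine calculus and arithmetic.
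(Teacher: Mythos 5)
Your route differs from the paper's. You reduce the claim to $b(T)\,r(T)\le 4$ and then argue via unimodality of the product, checking only the two endpoints. The paper avoids any monotonicity analysis: after using the Bober verification to assume $\log T\ge 23$, it \emph{decouples} the two factors, verifying separately on $23\le\log T\le 10^{55}$ that the bracket satisfies $b(L)\le 2$ and that $Q/\log Q\le 2\log T/\log\log T$, and then on $10^{55}\le\log T\le 10^{70593}$ that $b(L)\le 4(1-10^{-10})$ (which holds precisely for $L\le 162546.6$) while $Q/\log Q$ is there within a factor $1/(1-10^{-10})$ of $\log T/\log\log T$. Both arrive at the same conclusion, but two places in your version are not yet a proof: the unimodality of $g=b\,r$ is only sketched (tracking the sign of $g'$ across the $+/-/+$ sign changes of $b'$ around $L\approx4.9$ and $L\approx22$, and the point where the small positive $b'/b$ begins to dominate the even smaller $|r'/r|$, needs to be carried out carefully), and your endpoint estimate at $\log T=10^{70593}$ is razor-thin — there $4-b$ is of size $10^{-5}$ while $r>1$ — so the informal ``$b$ just below $4$, $r$ essentially $1$'' must be replaced by an explicit margin, which is exactly what the paper's cutoff $L\le 162546.6$ supplies.
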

\begin{proof}
By Corollary~\ref{cor:C1} it is sufficient to prove that
\begin{equation}\label{eq:C5}
\Big(\frac{3\log L}{10} + 2.8\frac{\log L}{L} + 0.2\frac{\log^2L}{L} + 0.4\Big)\frac{Q}{\log Q}
\leq 4 \frac{\log T}{\log\log T}
\end{equation}
with $Q= \log T + 31$ and $L=\log Q$. Taking account of the fact that the zeros $\rho$ of the Riemann
zeta function with $|\Imm(\rho)| < 10^{10}$ are simple~\cite{Bober} (actually this has been verified
up to $10^{12}$ \cite{Gourdon}, but we prefer to base our result on a doubly checked computation), we may
assume $\log T\geq 23$. In terms of $T$, Inequality~\eqref{eq:C5} is difficult, however we can verify that
both
\[
\frac{3}{10}\log L + 2.8\frac{\log L}{L} + 0.2\frac{\log^2L}{L} + 0.4
\leq 2
\]
and
\[
\frac{Q}{\log Q}
\leq 2\frac{\log T}{\log\log T}
\]
hold for $23\leq\log T\leq 10^{55}$ (the first one as a function of $L$, the second one as a function of
$\log T$).
%G \\ recall L:= \log(Q)
%G my(L0=log(23+31));solve(L=L0,10^6,(3/10*log(L) + 2.8*log(L)/L + 0.2*log(L)^2/L + 0.4) - 2)
%G \\%209 =  128.9703622723675519423964381
%G \\(exp(128)-31)
%G \\%210 = 3.887708405994595092222673688 E55
%G
%
%G \\ The second one: $R:= \log T$
%G ploth(R=20,10^5,(R+31)/log(R+31) - 2*R/log(R))
%G \\%212 = [20.000000000000000000, 100000.00000000000000, -8683.430920731650986, -0.3812543424672066306]
Thus we can assume $\log T\geq 10^{55}$. Under this hypothesis~\eqref{eq:C5} is implied by
%(because $\frac{\log T}{Q}\frac{\log Q}{\log\log T} \geq 1-10^{-10}$)
\[
\frac{3}{10}\log L + 2.8\frac{\log L}{L} + 0.2\frac{\log^2L}{L} + 0.4
\leq 4(1-10^{-10})
\]
which holds for $L \leq 162546.6$.
%G \\ recall L:= \log(Q)
%G my(L0=3*log(23+31));solve(L=L0,10^10,(3/10*log(L) + 2.8*log(L)/L + 0.2*log(L)^2/L + 0.4) - 4*(1-10^(-10)))
%G \\%54 =  162546.6905161932055988121757
%G exp(162546.6)-31
%G \\%57 = 1.234332417544326070120608695 E70593
%
%G my(L0=3*log(23+31));solve(L=L0,10^10,(3/10*log(L) + 2.8*log(L)/L + 0.2*log(L)^2/L + 0.4) - 5*(1-10^(-10)))
%G \\%54 = 4561981.395491502966249223250
%G exp(4561981)-31
%G \\%57 = 1.495710280375868808295697304 E1981243
%G $\log T \leq 10^{1.9\cdot 10^6}$.
\end{proof}

Theorem~\ref{th:C1} is proved in two steps, following an idea which we have introduced
in~\cite{GrenieMolteni1}: let $f_\K(s) := \sum_{\rho} \Ree\big(\frac{2}{s-\rho}\big)$, where the sum is
on the set of nontrivial zeros of $\zeta_\K$. First we exploit the fact that the terms appearing in the
sum defining $f_\K$ are all positive and depend on the zeros of $\zeta_\K$ to find a suitable combination
of values of $f_\K$ providing an upper bound for $n_\K(T;a)$ and $n_\K(T;0^+)$; then we bound $f_\K(s)$
with $\tilde{f}_\K(s)$ in the critical strip. Both steps depend on GRH.
\smallskip

To bound $f_\K$ we will use a preliminary explicit upper bound for $n_\K(T;1)$ which we deduced from a
crude version of~\eqref{eq:C1}. A virtuous circle appears here because the argument could be iterated
producing better and better bounds. However, preliminary considerations suggest that the improvement is
quite marginal and affects only the secondary constants.

%\begin{notation}
%$s$ denotes a generic complex number, with real and complex parts denoted as $\sigma$ and $t$,
%respectively; $\gamma$ denotes the imaginary part of the nontrivial zeros.
%\end{notation}

\begin{acknowledgements}
A special thank to Alberto Perelli for valuable remarks and comments.
We would like to expand our thanks to the reviewer for her/his constructive comments and suggestions.
\end{acknowledgements}

\section{Preliminary computations and the upper bound for $n_\K(T;a)$}\label{sec:C2}
For $\Ree(s) > 1$ we have
\[
-\frac{\zeta_\K'}{\zeta_\K}(s)
= \sum_{n=1}^{\infty} \tilde{\Lambda}_\K(n)n^{-s}
\quad
\text{with}
\quad
\tilde{\Lambda}_\K(n)
=
\begin{cases}
\displaystyle
\sum_{\PP|p,\, f_\PP|k} \log \Norm\PP & \text{if $n=p^k$},\\
\quad\ 0                              & \text{otherwise},
\end{cases}
\]
where $p$ is a prime number, $\PP$ a prime ideal in $\K$ above $p$, $\Norm\PP$ its absolute norm and
$f_\PP$ its residual degree. The formula for $\tilde{\Lambda}_\K$ shows that
$\tilde{\Lambda}_\K(n)\leq n_\K \Lambda(n)$ for every integer $n$, so that
\begin{equation}\label{eq:C6}
\Big|\frac{\zeta_\K'}{\zeta_\K}(s)\Big|\leq -n_\K\frac{\zeta'}{\zeta}(\sigma)
\qquad
\forall\sigma=\Ree(s)>1.
\end{equation}
The functional equation for $\zeta_\K$ reads
\begin{equation}\label{eq:C7}
\xi_\K(1-s) = \xi_\K(s)
\end{equation}
where
\begin{equation}\label{eq:C8}
\xi_\K(s) := s(s-1) \disc_\K^{s/2}\Gamma_\K(s)\zeta_\K(s)
\end{equation}
and, with $r_1$ and $r_2$ the number of real and complex embeddings of $\K$,
\begin{equation}\label{eq:C9}
\Gamma_\K(s) := \Big[\pi^{-\frac{s+1}{2}}\Gamma\Big(\frac{s+1}{2}\Big)\Big]^{r_2}
                \Big[\pi^{-\frac{s}{2}}  \Gamma\Big(\frac{s}{2}\Big)  \Big]^{r_1+r_2}.
\end{equation}
Since $\xi_\K(s)$ is an entire function of order $1$ and does not vanish at $s = 0$, one has
\begin{equation}\label{eq:C10}
\xi_\K(s) = e^{A_\K+B_\K s} \prod_\rho \Big(1 - \frac{s}{\rho}\Big) e^{s/\rho}
\end{equation}
for some constants $A_\K$ and $B_\K$, where $\rho$ runs through all the zeros of $\xi_\K(s)$, which are
precisely those zeros $\rho = \beta + i\gamma$ of $\zeta_\K$ for which $0 < \beta < 1$. We recall that the
zeros are symmetric with respect to the real axis, as a consequence of the fact that $\zeta_\K(s)$ is real
for $s\in\R$.\\
Differentiating~\eqref{eq:C8} and~\eqref{eq:C10} logarithmically we obtain the identity
\begin{equation}\label{eq:C11}
\frac{\zeta'_\K}{\zeta_\K}(s)
  = B_\K
   + \sum_\rho \Big(\frac{1}{s-\rho}+\frac{1}{\rho}\Big)
   - \frac{1}{2}\log \disc_\K
   - \Big(\frac{1}{s}+\frac{1}{s-1}\Big)
   - \frac{\Gamma'_\K}{\Gamma_\K}(s),
\end{equation}
valid identically in the complex variable $s$.\\
Stark~\cite[Lemma~1]{Stark1} proved that the functional equation~\eqref{eq:C7} implies that $B_\K =
-\sum_\rho \Ree(\rho^{-1})$, and that once this information is available one can use~\eqref{eq:C11} and
the definition of the gamma factor in~\eqref{eq:C9} to prove that the function $f_\K(s) :=
\sum_\rho\Ree\big(\frac{2}{s-\rho}\big)$ can be exactly computed via the alternative representation
\begin{equation}\label{eq:C12}
f_\K(s)
= 2\Ree\frac{\zeta'_\K}{\zeta_\K}(s)
    +\log\frac{\disc_\K}{\pi^{n_\K}}
    +\Ree\Big(\frac{2}{s}
             +\frac{2}{s-1}
        \Big)
    +(r_1+r_2)\Ree\frac{\Gamma'}{\Gamma}\Big(\frac{s}{2}\Big)
    +r_2\Ree\frac{\Gamma'}{\Gamma}\Big(\frac{s+1}{2}\Big).
\end{equation}
The relevance of this function for our problem comes from two facts: it is a sum on zeros each one
appearing with the weight $\Ree\big(\frac{2}{s-\rho}\big)$ which is \emph{positive} under GRH whenever
$\Ree(s)> \tfrac{1}{2}$, and it can be computed via the alternative formula~\eqref{eq:C12} which
\emph{does not involve} the zeros. For example, assuming GRH we get
\[
n_\K(T;a)
\leq \frac{1}{c(\sigma)}\sum_{\rho}\Ree\Big(\frac{2}{\sigma+iT-\rho}\Big)
=    \frac{1}{c(\sigma)}f_\K(\sigma+iT)
\qquad
T > a
\]
with $c(\sigma):=\tfrac{2\sigma-1}{(\sigma-1/2)^2+a^2}$, which is a lower bound for the weight of the
zeros counted by $n_\K(T;a)$. By~\eqref{eq:C12} the part depending on the discriminant in
$c(\sigma)^{-1}f_\K(\sigma+iT)$ is simply $c(\sigma)^{-1}\log\disc_\K$, hence to bound the contribution
of this parameter to $n_\K(T;a)$ we need to choose $\sigma$ such that $c(\sigma)$ is maximum. This
happens when $\sigma= \tfrac{1}{2} + a$, giving the bound
\begin{equation}\label{eq:C13}
n_\K(T;a) \leq a f_\K\Big(\frac{1}{2}+a+iT\Big)
\qquad
\forall T>a.
\end{equation}
Let $\tfrac{1}{2}+iT$ be a zero for $\zeta_\K$, and let $\nu$ be its multiplicity. Then
$f_\K\big(\frac{1}{2}+a+iT\big) \sim 2\nu a^{-1}$ as $a\to 0$. Thus the previous formula overestimates
the multiplicity of zeros by a factor two.
The following argument improves~\eqref{eq:C13} adding greater flexibility to the choice of the weight. Let
$g\colon \R\to \R^+$ be any map and $\mu$ be a measure in $\R$ such that
\begin{equation}\label{eq:C14}
g(\gamma) \leq \int_{\R} \frac{\d\mu(t)}{(\sigma-\tfrac{1}{2})^2+(t-\gamma)^2}
\qquad \forall \gamma\in\R.
\end{equation}
Then summing over all zeros and assuming GRH we have
\begin{align*}
\sum_{\rho} g(\gamma)
&\leq \sum_{\rho}\int_{\R} \frac{\d\mu(t)}{(\sigma-\tfrac{1}{2})^2+(t-\gamma)^2}
 = \frac{1}{2\sigma-1}\int_{\R} \sum_{\rho}\frac{2\sigma-1}{(\sigma-\tfrac{1}{2})^2+(t-\gamma)^2}\d\mu(t),
\end{align*}
producing the bound
\begin{equation}\label{eq:C15}
\sum_{\rho} g(\gamma)
\leq \frac{1}{2\sigma-1}\int_{\R} f_\K(\sigma+it)\d\mu(t).
\end{equation}
Moreover, suppose that $\mu$ is symmetric with respect to a point $t_0$ and such that for some
real $c$ the measure $\mu+c\delta_{t_0}$ is positive, where $\delta_{t_0}$ is the Dirac measure at
$t_0$. Furthermore, let $\tilde{f}_\K(\sigma+it)$ be a concave upper bound for $f_\K(\sigma+it)$ in the
support of $\mu$, then we immediately deduce that
\begin{equation}\label{eq:C16}
\sum_{\rho} g(\gamma)
\leq \frac{\mu(\R)}{2\sigma-1}\,\tilde{f}_\K(\sigma+it_0).
\end{equation}
The argument has a variational flavor: finding the minimum for $\frac{\mu(\R)}{2\sigma-1}$ in the
set of (symmetric around $t_0$ and) positive (outside $t_0$) measures $\mu$ satisfying~\eqref{eq:C14}.

We apply the argument with $g(\gamma) := \chi_{[T-a,T+a]}(\gamma)$ and $T>a$, so that
\[
\sum_{\rho} g(\gamma) = n_\K(T;a).
\]
%It is interesting to remark that integrating~\eqref{eq:C14} in $\gamma$ we get the lower bound
%$\frac{\mu(\R)}{2\sigma-1} \geq \frac{a}{\pi}$, which happens to be exactly the bound given by
%the general principle~\eqref{eq:C2}: this shows that there is no motivation to exclude a priori
%that the simple upper bound~\eqref{eq:C16} may be able to produce the asymptotically correct bound for
%$n_\K(T;a)$. The best result we have been able to prove is not trivial but is only $a/2$.
\smallskip

We have experimented with several possible measures, but actually our best results come from a very
simple choice. In fact, we set
\begin{equation}\label{eq:C17}
\d\mu(t):= \sum_{j=-2}^2c_j\delta_{T-b_j}(t),
\end{equation}
the sum of five Dirac's deltas, with $c_{-j}=c_j$ and $b_{-j}=-b_{j}$ for every $j$, in order to make
$\d\mu$ symmetric around $T$. With this choice~\eqref{eq:C14} becomes
\begin{equation}\label{eq:C18}
\chi_{[-a,a]}(\gamma)
\leq \sum_{j=-2}^2 \frac{c_j}{\alpha^2+(\gamma-b_j)^2}
\qquad \forall \gamma\in\R
\end{equation}
where $\alpha:=\sigma-\frac{1}{2}$; we have also removed the parameter $T$ via the shift
$T+\gamma\to\gamma$. We are interested in a combination of parameters producing a small value for
\[
\frac{\mu(\R)}{2\sigma-1}
= \frac{c_0+2c_1+2c_2}{2\alpha}.
\]
For the moment we have not yet determined any set of values for the parameters, however, we can make a
simple test proving that our strategy has a good chance of producing something interesting.
Suppose that $1/2+iT$ is a zero and let $\nu$ be its multiplicity, then $f_\K(s)\sim 2\Ree
(\nu(s-1/2-iT)^{-1})$ as $s$ goes to $1/2+iT$. By~\eqref{eq:C15} with the measure~\eqref{eq:C17} and
letting $\sigma\to 1/2$ we get
\begin{equation}\label{eq:C19}
n_\K(T;0^+)
 \leq \frac{1}{2\sigma-1}\sum_{j=-2}^2 c_jf_\K(\sigma+iT-ib_j)
 = \nu\sum_{j=-2}^2 \frac{c_j}{\alpha^2+b_j^2} + R(\sigma),
\end{equation}
where the remainder $R(\sigma)$ is $O(\sum |b_j|)$. Hence the function to the right-hand side
of~\eqref{eq:C19} is substantially $\nu$ if we require that the constants $c_j$'s produce an equality
in~\eqref{eq:C18} when $\gamma=0$ and the $b_j$'s are small. In this way we improve on what comes from
the elementary argument~\eqref{eq:C13}.
\smallskip

We have six parameters: $\alpha$, $b_1$ and $b_2$ and the three $c_j's$. Equation~\eqref{eq:C18} shows a
homogeneity in $a$: once we have found a set of parameters $\alpha$, $b_j$ and $c_j$ for $a=1$, the
parameters $a\alpha$, $ab_j$ and $a^2c_j$ can be used for any $a$. We thus suppose $a=1$. Moreover, we
set $\alpha = \tfrac{1}{4}$ and to fix the value of the other parameters we impose the equality
in~\eqref{eq:C18} for $\gamma=0$ (ensuring the good asymptotic estimate for the multiplicity),
$\gamma=\pm \tfrac{3}{5}$ and $\gamma=\pm 1$: values $\frac{1}{4}$ for $\alpha$ and $\pm\frac{3}{5}$ have
been chosen by trial and error and produce our best result, but are essentially arbitrary. The
equalities form a linear system in the $c_j$'s which may be explicitly solved in terms of $b_1$ and
$b_2$.\\
Then we impose two extra conditions: the first one requiring that the zero of the difference of both sides
of~\eqref{eq:C18} in $\gamma=\pm\tfrac{3}{5}$ has at least order two, the second one requiring that the
zero in $\gamma=0$ has at least order four. Due to the symmetry, these conditions correspond to the two
equations
\begin{equation}\label{eq:C20}
\begin{cases}
\displaystyle
     \sum_{j=-2}^2 \frac{c_j(\frac{3}{5}-b_j)}{(\frac{1}{16}+(\frac{3}{5}-b_j)^2)^2} = 0, \\
\displaystyle
     \sum_{j=-2}^2 \frac{c_j(\frac{1}{16}-3b_j^2)}{(\frac{1}{16}+b_j^2)^3} = 0.
\end{cases}
\end{equation}
Observe that the equation
\begin{equation}\label{eq:C21}
\sum_{j=-2}^2 \frac{c_j}{\frac{1}{16}+(\gamma-b_j)^2} = 1
\end{equation}
(which  is an algebraic equation in $\gamma$ of degree $10$) has at least the roots $0$ (multiplicity
$\geq 4$), $\pm \frac{3}{5}$ (multiplicity $\geq 2$, each) and $\pm 1$ (multiplicity $\geq 1$, each).
Hence there are no other roots, and comparing the two sides of~\eqref{eq:C21} as $\gamma\to\infty$ we
conclude that~\eqref{eq:C18} holds in $[-1,1]$. Moreover, \eqref{eq:C18} is also true for $|\gamma|>1$ if
the $c_j$'s are not negative.

Equation~\eqref{eq:C20} can be seen as an algebraic system in the variables $b_1^2$ and $b_2^2$. Using
the resultant of the polynomials, we can check that there is a unique solution such that $0\leq b_1\leq
b_2$ and it is
\[
b_1 = 0.355\ldots,\quad
b_2 = 0.875\ldots,
\]
giving
\[
c_1 = 0.0200\ldots,\quad
c_2 = 0.0491\ldots,\quad
c_3 = 0.0651\ldots.
\]
For this combination we get
\[
\frac{\mu(\R)}{2\sigma-1} \leq \frac{1}{2}
\]
%G my(c0=0.0201,c1=0.0492,c2=0.0652,al=1/4);(c0+2*c1+2*c2)/2/al
%G \\\\%32 = 0.4978000000000000000000000000
that, coming back to generic $a$, via~\eqref{eq:C16} yields
\begin{equation}\label{eq:C22}
n_\K(T;a)
\leq \frac{a}{2} \tilde{f}_\K\Big(\frac{1}{2} + \frac{a}{4} + iT\Big)
\qquad \forall T>a,
\end{equation}
where for the moment $\tilde{f}_\K$ denotes any concave upper bound of $f_\K$.
\smallskip\\
Our second result~\eqref{eq:C4} is proved with a slightly different argument. It needs the formulas for
the $c_j$'s to be made explicit in terms of the other parameters, thus we further simplify our definition
of the measure imposing $c_2=0$ in~\eqref{eq:C17}, i.e. setting
\[
%\d\mu(t):= \sum_{j=-1}^1 c_j\delta_{T-b_j}(t)
\d\mu(t):= c_1\delta_{T+b}(t) + c_0\delta_{T}(t) + c_1\delta_{T-b}(t).
\]
We fix the $c_j$'s in such a way as to get an equality in~\eqref{eq:C18} for $\gamma=0$ and
$\gamma=\pm a$; this happens for
\begin{equation}
\label{eq:C23}
\begin{cases}
\displaystyle
c_0 = \frac{-\alpha^6+(3b^2-2a^2)\alpha^4+(3b^2-a^2)a^2\alpha^2}{b^2(5\alpha^2+a^2+b^2)},\\[1em]
\displaystyle
c_1 = \frac{\alpha^6+(2a^2+3b^2)\alpha^4+(a^4+3b^4)\alpha^2+(a^2-b^2)^2b^2}{2b^2(5\alpha^2+a^2+b^2)}.
\end{cases}
\end{equation}
%
%G M=[1/al^2,1/(al^2+b^2);1/(al^2+a^2),(al^2+a^2+b^2)/(al^2+(a-b)^2)/(al^2+(a+b)^2)];
%G c0 =(M^-1*[1;1])[1,1]
%G c1 =(M^-1*[1;1])[2,1]/2
%G c0+2*c1
%G \\%14 = (6*al^4 + (3*b^2 + 3*a^2)*al^2 + (b^4 - 2*a^2*b^2 + a^4))/(5*al^2 + (b^2 + a^2))
%
Formulas in~\eqref{eq:C23} show that $c_1$ is always positive, but that $c_0$ may be negative for some
values of the parameters. However, for these $c_j$'s the function appearing to the right-hand side
of~\eqref{eq:C18} can be written as sum of squares and thus is always positive.
%
%G numerator(c1/(al^2+(g-b)^2) + c0/(al^2+g^2) + c1/(al^2+(g+b)^2))
%G \\{
%G \\%22 = 5*al^8
%G \\      + (11*b^2 + (a^2 + 15*g^2))*al^6
%G \\      + (7*b^4 + (2*a^2 + 3*g^2)*b^2 + (12*g^2*a^2 + 6*g^4))*al^4
%G \\      + (b^6 + (a^2 + 5*g^2)*b^4 + (-10*g^2*a^2 + 3*g^4)*b^2 + (5*g^2*a^4 + 3*g^4*a^2))*al^2 \\ <--- sum of squares: (b^6 + (a^2*b^4 + 5*(b^2 - a^2)^2*g^2 + 3*g^4*b^2 + 3*g^4*a^2)*al^2
%G \\      + (g^2*b^6 + (-2*g^2*a^2 + g^4)*b^4 + (g^2*a^4 - 2*g^4*a^2)*b^2 + g^4*a^4)             \\ <--- sum of squares: g^2*(b^2+g^2)*(b^2-a^2)^2
%G \\}
%
As a consequence only the range $[-a,a]$ has to be considered for~\eqref{eq:C18}. Previously we have
taken advantage of the homogeneity of the problem in the $a$ parameter, but for the present application
it is useful to work out a formula allowing the limit $a\to 0$ without simultaneously sending $\alpha$ to
$0$. As a consequence we set $b^2 = \frac{a^2}{2}$, giving
\begin{gather*}
\begin{cases}
\displaystyle
c_0 = -2\frac{(2\alpha^2-a^2)(\alpha^2+a^2)\alpha^2}
           {a^2(10\alpha^2+3a^2)}\\
\displaystyle
c_1 = \frac{(2\alpha^2+a^2)(4\alpha^4+12a^2\alpha^2+a^4)}
           {4a^2(10\alpha^2+3a^2)}
\end{cases}
\intertext{which produces}
\frac{c_0+2c_1}{2\alpha}
    = \frac{24\alpha^4+18a^2\alpha^2+a^4}
           {4(10\alpha^2+3a^2)\alpha}
\end{gather*}
%
%G substpol(c0,b^2,a^2/2)
%G \\%23 = (-4*al^6 - 2*a^2*al^4 + 2*a^4*al^2)/(10*a^2*al^2 + 3*a^4)
%G substpol(c1,b^2,a^2/2)
%G \\%24 = (8*al^6 + 28*a^2*al^4 + 14*a^4*al^2 + a^6)/(40*a^2*al^2 + 12*a^4)
%G substpol(2*c1+c0,b^2,a^2/2)
%G \\%25 = (24*al^4 + 18*a^2*al^2 + a^4)/(20*al^2 + 6*a^2)
%
without assuming any proportionality between $\alpha$ and $a$. Once again the choice
$b^2=\frac{a^2}{2}$ is the result of a trial and error procedure. Inequality~\eqref{eq:C18} holds as the
equality
\[
\frac{c_1}{\alpha^2+(\gamma-\frac{a}{\sqrt{2}})^2} + \frac{c_0}{\alpha^2+\gamma^2} + \frac{c_1}{\alpha^2+(\gamma+\frac{a}{\sqrt{2}})^2} = 1
\]
in six complex points (multiplicity included). The definitions of $c_j$'s give the roots $\gamma=0$
(multiplicity $\geq 2$) and $\gamma = \pm a$; the remaining roots solve
\[
\gamma^2 = \frac{a^4-36\alpha^4}{20\alpha^2+6a^2}.
\]
%
%G \\ the equality
%G substpol(c1/(al^2+(g-b)^2)+c0/(al^2+g^2)+c1/(al^2+(g+b)^2)-1,b^2,a^2/2)
%G \\ remove the g^2*(g^2-a^2) factor
%G numerator(substpol(c1/(al^2+(g-b)^2)+c0/(al^2+g^2)+c1/(al^2+(g+b)^2)-1,b^2,a^2/2))/g^2/(g^2-a^2)
%G \\%26 = -72*al^4 - 40*g^2*al^2 + (2*a^4 - 12*g^2*a^2)
%G \\equating to zero this polynomial in g^2 one gets the remaining solution in~\eqref{eq:G3}
%
\noindent%
Inequality~\eqref{eq:C18} is satisfied in the range $\gamma\in[-a,a]$ if and only if these two extra
solutions are either $0$ or non-real.
%The choice $\alpha^2 = \frac{a^2}{6}$ produces a zero of fourth order at
%$\gamma = 0$, and gives $c_0 = \frac{a^2}{18}$, $c_1 = \frac{2a^2}{9}$ giving $\frac{c_0+2c_1}{2\alpha}
%= \frac{\sqrt{6}}{4}a$ which however is weaker than what we have found previously.\\
%
%G substpol(substpol(c0,b^2,a^2/2),al^2,a^2/6)
%G \\%29 = 1/18*a^2
%G substpol(substpol(c1,b^2,a^2/2),al^2,a^2/6)
%G \\%30 = 2/9*a^2
%G substpol(substpol(2*c1+c0,b^2,a^2/2),al^2,a^2/6)
%G \\%30 = 1/2*a^2
%
This is what happens as long as $a^2< 6\alpha^2$, and since $n_\K(T;0^+) \leq n_\K(T;a)$ for every $a$,
we deduce that
\[
n_\K(T;0^+)
\leq \frac{24\alpha^4+18a^2\alpha^2+a^4}{4(10\alpha^2+3a^2)\alpha} \tilde{f}_\K(\sigma + iT),
\]
where again $\tilde{f}_\K$ denotes any concave upper bound of $f_\K$. Sending $a\to 0$ to the right-hand
side we conclude that
%\[
%\begin{cases}
%c_0 = -\frac{2A^2}{5a^2} - \frac{2A}{25} + o(1)\\
%c_1 = \frac{A^2}{5a^2} + \frac{16A}{25}  + o(1)\\
%\frac{c_0+2c_1}{2\sqrt{A}}
%    = \frac{3}{5}\sqrt{A}+ o(1).
%\end{cases}
%\]
%
\begin{equation}\label{eq:C24}
n_\K(T;0^+)
\leq
\frac{3}{10}(2\sigma-1)\tilde{f}_\K(\sigma + iT)
\qquad \forall T>0,
\qquad \forall \sigma>\frac{1}{2}.
\end{equation}

In the next section we will prove that $f_\K(\sigma+it)\leq \tilde{f}_\K(\sigma+it)$ where $\tilde{f}_\K$
is the function given in Theorem~\ref{th:C1}. With~\eqref{eq:C16} this suffices to prove~\eqref{eq:C3}
and~\eqref{eq:C4} from~\eqref{eq:C22} and~\eqref{eq:C24} respectively, because
$\tilde{f}_\K(\sigma+it)$ is a concave map in the $t$ variable.

\section{Bounds in the critical strip}
\label{sec:C3}
The following two lemmas collect some elementary inequalities involving the gamma function which we will
need later, their proofs are in the Appendix.
\begin{lemma}\label{lem:C1}
Let $s=\sigma+it$ with $\sigma\geq 0$ and $|t|\geq \sigma+2$. Then
\begin{align*}
%        \frac{\Gamma'}{\Gamma}(s)      &=    \log s - \frac{1}{2s}  + \OO\Big(\frac{\arctan(t/\sigma)}{2t}\Big)
%                                       && \sigma>0\\
\Ree\Big(\frac{\Gamma'}{\Gamma}(s)\Big)&\leq \log|s-1/2|.
\end{align*}
\end{lemma}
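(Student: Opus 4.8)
The plan is to bound $\Ree\bigl(\frac{\Gamma'}{\Gamma}(s)\bigr)$ by comparing it with $\log|s-1/2|$ using the standard asymptotic expansion of the digamma function. First I would recall the integral/series representation
\[
\frac{\Gamma'}{\Gamma}(s) = \log s - \frac{1}{2s} - 2\int_0^\infty \frac{u\,\d u}{(u^2+s^2)(e^{2\pi u}-1)},
\]
valid for $\Ree(s)>0$, or equivalently Binet's formula. Taking real parts, the term $\log s$ contributes $\log|s|$, the term $-\frac{1}{2s}$ contributes $-\frac{\sigma}{2|s|^2}$, and the remaining integral is easily seen to be positive with real part bounded by a small explicit quantity; in fact $\Ree\bigl(\frac{1}{u^2+s^2}\bigr)$ need not be positive, so I would instead bound $\bigl|\int_0^\infty \frac{u\,\d u}{(u^2+s^2)(e^{2\pi u}-1)}\bigr|$ crudely by $\frac{1}{|t|^2-\ldots}\int_0^\infty \frac{u\,\d u}{e^{2\pi u}-1} = \frac{1}{24(|t|^2-\cdots)}$, using that $|u^2+s^2|\ge |t|^2-\sigma^2-u^2$ is not quite what we want — more carefully, for the regime $|t|\ge\sigma+2$ one has $|u^2+s^2|^2 = (u^2+\sigma^2-t^2)^2+4\sigma^2 u^2 \ge $ something bounded below, so the integral is $O(1/|s|^2)$ with an explicit constant.

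Next I would reduce everything to showing
\[
\log|s| - \frac{\sigma}{2|s|^2} + \varepsilon(s) \le \log|s-\tfrac12|,
\]
i.e.
\[
\log\frac{|s|}{|s-\tfrac12|} \le \frac{\sigma}{2|s|^2} - \varepsilon(s),
\]
where $\varepsilon(s)$ is the small error from the Binet integral. Now $|s|^2 - |s-\tfrac12|^2 = \sigma - \tfrac14$, so $\frac{|s|^2}{|s-1/2|^2} = 1 + \frac{\sigma-1/4}{|s-1/2|^2}$ and hence $\log\frac{|s|}{|s-1/2|} = \tfrac12\log\bigl(1+\frac{\sigma-1/4}{|s-1/2|^2}\bigr) \le \frac{\sigma-1/4}{2|s-1/2|^2}$. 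So it suffices to prove
\[
\frac{\sigma-1/4}{2|s-1/2|^2} + \varepsilon(s) \le \frac{\sigma}{2|s|^2}.
\]
Since $|s-1/2|^2 \ge |s|^2 - \sigma$ and for $\sigma\ge 0$, $|t|\ge\sigma+2$ we have $|s|^2 = \sigma^2+t^2 \ge \sigma^2+(\sigma+2)^2$, both denominators are comparable and bounded below by $4$; a direct comparison of the two rational functions, combined with the explicit bound on $\varepsilon(s)$, should close the inequality. The boundary-of-region cases (small $\sigma$, $|t|$ close to $\sigma+2$) are where the inequality is tightest, so I would check those numerically at the end.

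The main obstacle is controlling the Binet-type error term $\varepsilon(s)$ with a genuinely explicit and sufficiently small constant when $|t|$ is only as large as $\sigma+2$ and $\sigma$ can be large: then $s$ is not close to the positive real axis but also not deep in the upper half plane, so $|u^2+s^2|$ can be moderately small for $u\approx\sqrt{t^2-\sigma^2}$, and the crude bound $\varepsilon(s)=O(1/|s|^2)$ may be too lossy near the corner of the region. I would handle this by splitting the $u$-integral at $u_0=|t|$ (or at $\sqrt{t^2-\sigma^2}$), bounding the tail $u>u_0$ trivially by $\frac{1}{u_0^2}\int_{u_0}^\infty\frac{u\,\d u}{e^{2\pi u}-1}$ which is exponentially small, and on $0\le u\le u_0$ using $|u^2+s^2|\ge 2\sigma u$ (valid since $\Ree(u^2+s^2)=u^2+\sigma^2-t^2$ and $\Imm=2\sigma u$, so $|u^2+s^2|\ge 2\sigma|u|$ always) to get $\int_0^{u_0}\frac{u\,\d u}{2\sigma u(e^{2\pi u}-1)} = \frac{1}{2\sigma}\int_0^{u_0}\frac{\d u}{e^{2\pi u}-1}$, which converges and gives $\varepsilon(s)=O(1/\sigma)$; combining this with the $O(1/|s|^2)$ bound valid when $\sigma$ is small yields a uniform estimate. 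Once $\varepsilon(s)$ is pinned down, the remaining algebra is routine.
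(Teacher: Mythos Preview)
Your overall strategy—start from a Stirling/Binet expansion and compare with $\log|s-1/2|$—is reasonable, but the specific reduction you propose does \emph{not} close. The step ``it suffices to prove
\[
\frac{\sigma-1/4}{2|s-1/2|^2} + \varepsilon(s) \le \frac{\sigma}{2|s|^2}
\]''
already fails with $\varepsilon(s)=0$ for large $\sigma$ on the boundary $|t|=\sigma+2$. Indeed
\[
\frac{\sigma}{2|s|^2}-\frac{\sigma-\tfrac14}{2|s-\tfrac12|^2}
=\frac{t^2-3\sigma^2+\sigma}{8|s|^2|s-\tfrac12|^2},
\]
and for $t=\sigma+2$ the numerator equals $-2\sigma^2+5\sigma+4$, which is negative once $\sigma>(5+\sqrt{57})/4\approx 3.14$. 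For instance at $\sigma=10$, $t=12$ one has $\frac{\sigma-1/4}{2|s-1/2|^2}\approx 0.02081>\frac{\sigma}{2|s|^2}\approx 0.02049$. The loss comes from the crude bound $\log(1+x)\le x$ applied to $x=(\sigma-\tfrac14)/|s-\tfrac12|^2$: when $\sigma$ is large this $x$ is of size $\sim 1/\sigma$, and the discarded quadratic term $x^2/2$ is exactly the order of magnitude of the quantity you are trying to bound. (There is also a small slip later: $\Imm(u^2+s^2)=2\sigma t$, not $2\sigma u$, so your $|u^2+s^2|\ge 2\sigma u$ bound is wrong, though the correct $|u^2+s^2|\ge 2\sigma t$ is actually more useful.)

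The paper avoids this loss by never passing to moduli prematurely: it writes
\[
\frac{\Gamma'}{\Gamma}(s)-\log\Big(s-\frac12\Big)
= -\log\Big(1-\frac{1}{2s}\Big)-\frac{1}{2s}-\frac{1}{12s^2}+\frac{1}{120s^4}
  +\int_0^\infty\frac{B_4(\{u\})}{(s+u)^5}\d u
= \frac{1}{24s^2}+\sum_{n\ge 3}\frac{1}{n2^ns^n}+\cdots,
\]
using the Euler--Maclaurin expansion of $\psi$ one order further than Binet. The point is that the $\tfrac{1}{2s}$ from the log series cancels the $-\tfrac{1}{2s}$ from $\psi$, and the $\tfrac{1}{8s^2}$ and $-\tfrac{1}{12s^2}$ combine to $\tfrac{1}{24s^2}$, whose real part $\tfrac{\sigma^2-t^2}{24|s|^4}$ is \emph{negative} throughout the region $|t|\ge\sigma+2$. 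One then bounds the tail and the $B_4$ integral by their absolute values and checks the resulting explicit rational function is negative. Your route can be salvaged the same way: keep $\log\frac{s}{s-1/2}=\sum_{n\ge1}\frac{1}{n(2s)^n}$ as a complex series rather than bounding $\log\frac{|s|}{|s-1/2|}$, and combine it with the next term of the $\psi$ expansion before taking real parts.
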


\begin{lemma}\label{lem:C2}
Let $u\in[-3/4,-1/4]$, then
\begin{subequations}\label{eq:C25}
\begin{align}
\int_\R |\Gamma(u+iy)|\,d y
&\leq 4.73.                                                   \label{eq:C25a}\\
\intertext{Furthermore suppose $|t|\geq 10$, then}
\int_\R |\Gamma(u+i(t-y))|\log(1+|y|) \d y
&\leq 4.73\log(1+|t|),                                        \label{eq:C25b}\\
\int_\R \frac{|\Gamma(u+i(t-y))|}{|1+4iy|}\,d y
&\leq 0.171,                                                  \label{eq:C25c}\\
\int_\R \frac{|\Gamma(u+i(t-y))|}{1+\alpha y^2}\,d y
&\leq
\begin{cases}
0.013         &\text{if }\alpha = 1,\\
0.007         &\text{if }\alpha = 2.
\end{cases}                                                   \label{eq:C25d}
\end{align}
\end{subequations}
\end{lemma}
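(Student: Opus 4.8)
The plan is to reduce every estimate in the statement to a single quantitative input, namely an explicit bound for $\int_\R |\Gamma(u+iy)|\,\d y$ when $u\in[-3/4,-1/4]$, and then to propagate that bound through the auxiliary weights $\log(1+|y|)$, $1/|1+4iy|$ and $1/(1+\alpha y^2)$ by elementary manipulations. First I would treat~\eqref{eq:C25a}: using the functional equation $\Gamma(u+iy)=\Gamma(u+1+iy)/(u+iy)$ one moves the real part of the argument into the strip $[1/4,3/4]$, where $|\Gamma(u+1+iy)|$ decays rapidly in $|y|$; combined with $|u+iy|\geq |u|\geq 1/4$ for the small-$|y|$ range and with the classical exponential decay $|\Gamma(\sigma+iy)|\ll e^{-\pi|y|/2}|y|^{\sigma-1/2}$ for $|y|$ large, the integral is finite and one can certify the numerical value $4.73$ by splitting $\R$ into a compact piece (bounded by interval arithmetic or by a crude maximum-times-length estimate) and two tails (bounded by the Stirling-type inequality). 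Since $|\Gamma(u+iy)|$ is even in $y$, it suffices to work on $[0,\infty)$ and double.

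Next, for~\eqref{eq:C25b}, the key observation is the subadditivity $\log(1+|y|)\leq \log(1+|t|)+\log(1+|t-y|)$, valid because $1+|y|\leq (1+|t|)(1+|y-t|)$. Writing $y\mapsto t-y$ in the integral, the first term contributes $\log(1+|t|)\int_\R|\Gamma(u+iy)|\,\d y\leq 4.73\log(1+|t|)$ by~\eqref{eq:C25a}, and the second term contributes $\int_\R |\Gamma(u+iy)|\log(1+|y|)\,\d y$, a fixed finite constant independent of $t$; one checks numerically that this constant is small enough that, for $|t|\geq 10$ (so $\log(1+|t|)\geq\log 11$), the sum is still $\leq 4.73\log(1+|t|)$. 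For~\eqref{eq:C25c} and~\eqref{eq:C25d} the weights are bounded by $1$ but are genuinely small on the bulk of the mass of $|\Gamma(u+iy)|$, which for $|t|\geq 10$ sits near $y\approx 0$ where $|t-y|$ is large and $|\Gamma(u+i(t-y))|$ is tiny; concretely I would bound $1/|1+4iy|\leq 1$ and $1/(1+\alpha y^2)\leq 1$ only on $|y|\leq |t|/2$ where $|\Gamma(u+i(t-y))|\leq|\Gamma(u+i(t-y))|$ is exponentially small in $|t|/2\geq 5$, and on $|y|>|t|/2$ use the decay of the weight ($1/|1+4iy|\leq 2/|t|$, resp. $1/(1+\alpha y^2)\leq 4/(\alpha t^2)$) times $\int_\R|\Gamma(u+iy)|\,\d y\leq 4.73$; optimizing the split point and inserting $|t|=10$ (the worst case, since all bounds are decreasing in $|t|$) yields the stated constants $0.171$, $0.013$, $0.007$.

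The main obstacle is purely computational rather than conceptual: all four bounds ultimately rest on having a genuinely \emph{explicit} and sufficiently sharp control of $|\Gamma(u+iy)|$ uniformly for $u\in[-3/4,-1/4]$, both on a bounded $y$-interval and in the tails, tight enough that the resulting numerical constants come out below the claimed thresholds with room to spare for $|t|\geq 10$. I expect this to be handled in the Appendix by combining the reflection/recursion formulas for $\Gamma$ with Stirling's inequality with explicit error term, reducing each claim to verifying a one-variable numerical inequality; the bookkeeping of the split points and of the monotonicity in $|t|$ is routine once~\eqref{eq:C25a} is in hand, so the crux is establishing~\eqref{eq:C25a} with a certified constant.
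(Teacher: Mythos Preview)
Your argument for~\eqref{eq:C25b} has a genuine gap. The subadditivity $\log(1+|y|)\leq\log(1+|t|)+\log(1+|t-y|)$ yields
\[
\int_\R|\Gamma(u+i(t-y))|\log(1+|y|)\,\d y\ \leq\ A_u\log(1+|t|)+C_u,
\]
with $A_u=\int_\R|\Gamma(u+iy)|\,\d y$ and $C_u=\int_\R|\Gamma(u+iy)|\log(1+|y|)\,\d y>0$. To deduce the bound $4.73\log(1+|t|)$ you would need $C_u\leq(4.73-A_u)\log 11$. But the constant $4.73$ in~\eqref{eq:C25a} is essentially sharp at $u=-1/4$ (that is where the maximum is attained, and the paper's Riemann-sum computation leaves almost no slack), whereas $C_{-1/4}$ is of order~$1$: there is no room to absorb it. The paper avoids this by proving the stronger statement
\[
\int_\R|\Gamma(u+i(t-y))|\log\frac{1+|y|}{1+|t|}\,\d y\leq 0
\]
directly, via two integrations by parts that reduce it to the inequality $2(1+t)^2F_2(u,t)\leq\int_0^tF_2(u,y)\,\d y$ with $F_2(u,v)=\int_v^\infty(w-v)|\Gamma(u+iw)|\,\d w$; the left side is exponentially small for $t\geq 10$ while the right side is bounded below by a fixed positive constant. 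This cancellation argument is the missing idea.

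For~\eqref{eq:C25a} your approach is valid; the paper streamlines the uniformity in $u$ by invoking the log-convexity of $u\mapsto\int_\R|\Gamma(u+iy)|\,\d y$ (Hardy--Ingham--P\'olya) to reduce to the endpoints $u=-3/4,-1/4$. For~\eqref{eq:C25c}--\eqref{eq:C25d} your splitting-at-$|t|/2$ strategy is different from the paper's: there one bounds $|\Gamma(u+iv)|\leq 5.3\,e^{-\pi|v|/2}$ and analyses $F(t)=\int_\R e^{-\pi|t-y|/2}g(y)\,\d y$ through the ODE $F''-\tfrac{\pi^2}{4}F=-\pi g$, using a maximum-principle argument to show $F$ is maximised on $[10,\infty)$ at $t=10$. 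Your crude split (weight $\leq 1$ on $|y|\leq 5$, weight $\leq 1/(2|t|)$ on $|y|>5$, times $4.73$) overshoots $0.171$ at $t=10$, and your claim that ``all bounds are decreasing in $|t|$'' is exactly what the ODE argument supplies and is not obvious from your decomposition alone; with more care your route may still close, but the paper's device handles both the monotonicity and the sharp constant in one stroke.
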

%G F(t,al)=intnum(y=-100,t,exp(-Pi/2*(t-y))/(1+al*y^2)) + intnum(y=t,100,exp(-Pi/2*(y-t))/(1+al*y^2))
%G F(10,1)
%G \\%9 = 0.01293490921406205099960205873
%G F(10,2)
%G \\%10 = 0.006502094562996112017131942872
%

Let $R_\K(T)$ be defined via
\begin{equation}\label{eq:C26}
N_\K(T) =: \frac{T}{\pi}\log\Big(\Big(\frac{T}{2\pi e}\Big)^{n_\K}\disc_\K\Big) + R_\K(T).
\end{equation}
Trudgian~\cite[Th.~2]{TrudgianIII} proved unconditionally that for every $T\geq 1$ one has
\begin{equation}\label{eq:C27}
|R_\K(T)| \leq \tilde{R}_\K(T) := d_1 W_\K(T) + d_2n_\K + d_3
\end{equation}
with
\[
W_\K(T) := \log\disc_\K + n_\K \log\Big(\frac{T}{2\pi}\Big)
\]
and $d_1= 0.317$, $d_2= 6.333+0.317\log(2\pi) \leq 6.9157$ and $d_3 = 3.482$. We use this result first to
bound $n_\K(t;1)$, and later to bound certain finite sums over zeros (see Lemma~\ref{lem:C4} below).
%(questa \`{e} usata per stimare termini secondari, quindi non sto a ottimizzare le costanti).
\begin{lemma}\label{lem:C3}
For $t\in\R$,
\begin{subequations}\label{eq:C28}
\begin{align}
n_\K(t;1) &\leq  0.636 W_\K(t) + 6.92n_\K + 3.49
&& \forall|t|> 1,                                   \label{eq:C28a}\\
n_\K(t;1) &\leq 0.954\log\disc_\K + 5.19n_\K + 3.49
&& \forall|t|\leq 1.                                \label{eq:C28b}
%%
%n_\K(t;1) &\leq 0.954\log\disc_\K + (0.636\log(1+|t|)+5.75)n_\K + 3.49
%&&  t\in\R.                                        \label{eq:A24d}
\end{align}
\end{subequations}
\end{lemma}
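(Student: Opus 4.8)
The plan is to bound $n_\K(t;1)$ by specializing the elementary inequality~\eqref{eq:C13} at $a=1$, i.e. $n_\K(t;1)\le f_\K(\tfrac32+it)$, and then to estimate $f_\K(\tfrac32+it)$ using the non-negativity of its terms together with the unconditional counting estimate~\eqref{eq:C26}--\eqref{eq:C27}. Actually, since~\eqref{eq:C13} requires GRH (via positivity of the summands $\Ree(2/(s-\rho))$), a cleaner route which is presumably the one intended is to go directly from the definition: for $\sigma=3/2$ and any nontrivial zero $\rho=\beta+i\gamma$ with $|\gamma-t|\le 1$, one has $\Ree\big(\tfrac{2}{\sigma+it-\rho}\big)=\tfrac{2(\sigma-\beta)}{(\sigma-\beta)^2+(t-\gamma)^2}\ge \tfrac{2(\sigma-\beta)}{(\sigma-\beta)^2+1}\ge \tfrac{2\cdot(1/2)}{(3/2)^2+1}=\tfrac{4}{13}$ unconditionally (using $0<\beta<1$, so $\sigma-\beta\in(1/2,3/2)$, and $x\mapsto x/(x^2+1)$ is handled on that interval). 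Hence $n_\K(t;1)\le \tfrac{13}{4} f_\K(\tfrac32+it)$; the constant $13/4$ is only slightly worse than the $2$ one would get under GRH, and the resulting coefficients match the ``crude version of~\eqref{eq:C1}'' mentioned in the introduction.

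Next I would plug $\sigma=3/2$ into the exact formula~\eqref{eq:C12}. The term $2\Ree\frac{\zeta'_\K}{\zeta_\K}(\tfrac32+it)$ is bounded by $-2n_\K\frac{\zeta'}{\zeta}(3/2)$ via~\eqref{eq:C6}, a fixed numerical multiple of $n_\K$. The term $\log\frac{\disc_\K}{\pi^{n_\K}}=\log\disc_\K - n_\K\log\pi$ produces the $\log\disc_\K$ contribution. The terms $\Ree\big(\tfrac{2}{s}+\tfrac{2}{s-1}\big)$ are bounded by an absolute constant (largest at $t$ small). For the two digamma terms $(r_1+r_2)\Ree\frac{\Gamma'}{\Gamma}(\tfrac{s}{2})+r_2\Ree\frac{\Gamma'}{\Gamma}(\tfrac{s+1}{2})$, I would use the standard bound $\Ree\frac{\Gamma'}{\Gamma}(z)\le \log|z|$ for $\Ree z>0$ (compare Lemma~\ref{lem:C1}, valid here for all $t$ since $\Ree(s/2)=3/4$ and the argument stays away from the poles), giving at most $n_\K\cdot\tfrac12\log\big(\tfrac{|s|^2}{4}\big)+\tfrac{n_\K}{2}\log\big(\tfrac{|s+1|^2}{4}\big)\le n_\K\log\big(\tfrac{|t|}{2}+C\big)$ for an explicit small $C$, which after massaging is $\le n_\K\log(T/2\pi)+(\text{const})\,n_\K = W_\K(t)-\log\disc_\K + (\text{const})n_\K$. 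Collecting, $f_\K(\tfrac32+it)\le W_\K(t)+c\,n_\K+c'$ for explicit $c,c'$; multiplying by $13/4$ and tracking constants yields~\eqref{eq:C28a} with the coefficient $0.636=\tfrac{13}{4}\cdot(\text{something just under }0.196)$.

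For part~\eqref{eq:C28b}, the regime $|t|\le 1$, the only change is that the digamma arguments $s/2,(s+1)/2$ now have bounded imaginary part, so $\Ree\frac{\Gamma'}{\Gamma}$ of them is simply a bounded quantity (no $\log T$ growth); this removes the $n_\K\log T$ term, and one is left with a bound of the form $\text{(const)}\log\disc_\K + \text{(const)}\,n_\K + 3.49$ after multiplying by $13/4$ — the asymmetry between the coefficient of $\log\disc_\K$ ($0.954$) and the absence of an $n_\K\log(T/2\pi)$ term reflecting that in $W_\K(t)$ for $|t|\le 1$ one cannot absorb anything into $\log(T/2\pi)$, so the $n_\K$-part must be carried as an explicit multiple of $n_\K$. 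Alternatively, and perhaps more in the spirit of the sentence ``which we deduced from a crude version of~\eqref{eq:C1}'', one applies~\eqref{eq:C1} (or Trudgian's $N_\K(T)$ bound directly through $n_\K(t;1)=\tfrac12(N_\K(t+1)-N_\K((t-1)^-))$ together with~\eqref{eq:C26}--\eqref{eq:C27}) and simplifies; for $|t|>1$ this gives $n_\K(t;1)\le \tfrac1\pi W_\K(t+1)+\ldots$, and bounding $\log\tfrac{t+1}{2\pi}$ in terms of $\log\tfrac{t}{2\pi}$ plus a constant, and $\log\disc_\K$ is common, produces the stated $0.636$ (note $1/\pi\approx 0.318$, and $0.636=2\cdot 0.318$, which is exactly the factor-of-two loss from counting a symmetric window $[t-1,t+1]$ against the one-sided $N_\K$). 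The main obstacle is purely bookkeeping: arranging the various absolute constants (from $-\zeta'/\zeta(3/2)$, from $\tfrac2s+\tfrac2{s-1}$, from $-\log\pi$, from the slack in $\log|s/2|-\tfrac12\log(T/2\pi)$, and from $d_2,d_3$) so that they collapse into the clean values $6.92$ and $3.49$, and verifying the elementary inequality $\log\tfrac{t+1}{2\pi}\le \log\tfrac{t}{2\pi}+\tfrac1t$ (or its analogue) holds on the claimed range $|t|>1$ with room to spare; no conceptually hard step is involved once the right representation is chosen.
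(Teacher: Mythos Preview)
Your primary route via $f_\K(\tfrac32+it)$ cannot produce the stated constants. In the representation~\eqref{eq:C12} the coefficient of $\log\disc_\K$ in $f_\K(s)$ is exactly $1$, so any inequality $n_\K(t;1)\le C\,f_\K(\tfrac32+it)$ with $C\ge 1$ yields a coefficient $\ge 1$ on $\log\disc_\K$, nowhere near $0.636$. (Incidentally your weight computation is off: for $\sigma=\tfrac32$ one has $\sigma-\beta\in(\tfrac12,\tfrac32)$, and the minimum of $2x/(x^2+1)$ on that interval is $4/5$ at $x=\tfrac12$, not $4/13$; so the correct unconditional constant is $C=5/4$, still too large.) There is no ``something just under $0.196$'' to rescue the arithmetic.

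The paper uses exactly your ``alternative'': write $n_\K(t;1)=\tfrac12\big(N_\K(t+1^+)-N_\K((t-1)^-)\big)$ and apply~\eqref{eq:C26}--\eqref{eq:C27}. The main term contributes $\tfrac1\pi W_\K(t)$ (after $(t+1)\log(t+1)-(t-1)\log(t-1)\le 2(1+\log t)$ and concavity of $W_\K$), while the two error terms contribute at most $d_1 W_\K(t)+d_2 n_\K+d_3$ by concavity of $\tilde R_\K$. Thus $0.636=\tfrac1\pi+d_1$: half comes from the main-term slope and half from Trudgian's error constant $d_1=0.317$, not from a ``factor-of-two loss'' as you suggest (the numerical coincidence $d_1\approx 1/\pi$ makes your explanation look right). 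Likewise $6.92$ and $3.49$ are simply $d_2$ and $d_3$, with no further combinations needed. This handles $t\ge 2$; for $1<t\le 2$ the paper uses $n_\K(t;1)\le\tfrac12 N_\K(3)$ and checks that the resulting explicit number is dominated by the $t\ge 2$ formula extrapolated down. For~\eqref{eq:C28b} the paper simply bounds $n_\K(t;1)\le N_\K(2)$ and applies~\eqref{eq:C26}--\eqref{eq:C27} once, so the $\log\disc_\K$ coefficient is $\tfrac2\pi+d_1\approx 0.954$.
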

\begin{proof}
The symmetry of roots allows us to assume $t\geq 0$.
For $t\geq 2$, using~\eqref{eq:C27}:
\begin{align*}
n_\K(t;1) &= \frac{1}{2}(N_\K(t+1^+)-N_\K(t-1^-))\\
          &\leq \frac{1}{2\pi}\Big((t+1)\log\Big(\Big(\frac{t+1}{2\pi e}\Big)^{n_\K}\disc_\K\Big)
                                  -(t-1)\log\Big(\Big(\frac{t-1}{2\pi e}\Big)^{n_\K}\disc_\K\Big)
                              \Big)\\
          &   + \frac{d_1}{2}(W_\K(t+1)+W_\K(t-1)) + d_2n_\K + d_3.
\intertext{Since $(t+1)\log(t+1)-(t-1)\log(t-1)\leq 2(1+\log t)$ and $W_\K(t)$ is a concave map, we get}
% to prove it, write $\log(t\pm 1)$ as $\log t + \log(1\pm 1/t)$ and take power series in $1/t$; the
% left-hand side is 2\log t + 2 - \sum_{k=1} (1-\frac{2}{2k+1}) t^{-2k} and the conclusion becomes evident
          &\leq \frac{1}{2\pi}(2n_\K(1 + \log t)
                              + 2\log\disc_\K
                              - 2n_\K \log(2\pi e)
                          )
% W_\K(t) is a concave map
              + d_1 W_\K(t) + d_2n_\K + d_3 \\
          &= \Big(\frac{1}{\pi}+d_1\Big) W_\K(t) + d_2n_\K + d_3.
\end{align*}
%G d1 = 0.317;
%G d2 = 6.9157;
%G d3 = 3.482;
%G [1/Pi+d1,d2,d3]
%G \\%4 = [0.6353098861837906715377675267, 6.915700000000000000000000000, 3.482000000000000000000000000]
%
Introducing the values for $d_j$'s we get~\eqref{eq:C28a} for $t\geq 2$.\\
For $1< t\leq 2$, we have
\begin{align*}
n_\K(t;1) &\leq \frac{1}{2}N_\K(t+1^+)
           \leq \frac{1}{2}N_\K(3)\\
          &\leq \Big(\frac{3}{2\pi}+\frac{d_1}{2}\Big)\log\disc_\K
              +\Big(\frac{3}{2\pi}\log\Big(\frac{3}{2\pi e}\Big) + \frac{d_1}{2}\log\Big(\frac{3}{2\pi}\Big) + \frac{d_2}{2}\Big)n_\K
              + \frac{d_3}{2}.
\end{align*}
%G d1 = 0.317;
%G d2 = 6.9157;
%G d3 = 3.482;
%G
%G ploth(t=1,2,(0.636*[1,log(t/2/Pi),0] + [0,6.92,3.49])-([3/2/Pi+d1/2, 3/2/Pi*log(3/2/Pi/exp(1))+d1/2*log(3/2/Pi)+d2/2, d3/2]))
%G \\%42 = [1.0000000000000000000, 2.000000000000000000, 3.517072431399269193 E-5, 3.681713020041200490]
%
This bound is a bit smaller than what we get extrapolating to $t\in(1,2]$ the formula we have found for $t>2$.
Not needing its full strength, we estimate $n_\K(t;1)$ for $t\in(1,2]$ with that bound, thus proving~\eqref{eq:C28a}.\\
For $0\leq t\leq 1$, we have
\begin{align*}
n_\K(t;1) &\leq N_\K(t+1^+)
           \leq N_\K(2)\\
%          &\leq \frac{2}{\pi}\log\Big(\Big(\frac{2}{2\pi e}\Big)^{n_\K}\disc_\K\Big)
%              + d_1W_\K(2) + d_2n_\K + d_3\\
%
%          &= \frac{2}{\pi}\Big(\log\disc_\K + n_\K\log\Big(\frac{1}{\pi e}\Big)\Big)
%              + d_1\log\disc_\K + d_1 n_\K \log(1/\pi) + d_2n_\K + d_3\\
%
          &\leq \Big(\frac{2}{\pi}+d_1\Big)\log\disc_\K
              +\Big(\frac{2}{\pi}\log\Big(\frac{1}{\pi e}\Big) + d_1\log\Big(\frac{1}{\pi}\Big) + d_2\Big)n_\K
              + d_3
\end{align*}
giving~\eqref{eq:C28b} when the values for $d_j$'s are introduced.
%G [2/Pi+d1, 2/Pi*log(1/Pi/exp(1))+d1*log(1/Pi)+d2, d3]
%G \\%6 = [0.9536197723675813430755350535, 5.187443174466346287621823016, 3.482000000000000000000000000]
%The bound in~\eqref{eq:A24d} is deduced from~(\eqref{eq:C28a}--\eqref{eq:C28b}).
\end{proof}

\begin{lemma}\label{lem:C4}
Let $c>0$ and $t\in\R$, with $|t|>c+1$. Let $u>0$, then
\begin{align*}
\sum_{|\gamma-t|\leq c} \frac{1}{|u + i(\gamma-t)|}
\leq \Big(\frac{\asinh(c/u)}{\pi} + \frac{d_1}{u}\Big)W_\K(|t|) + \frac{d_2 n_\K + d_3}{u}.
\end{align*}
\end{lemma}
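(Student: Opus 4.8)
The goal is to bound the sum $\sum_{|\gamma-t|\le c}\frac{1}{|u+i(\gamma-t)|}$, where the $\gamma$ range over imaginary parts of nontrivial zeros of $\zeta_\K$ in the window $[t-c,t+c]$. The natural idea is to write the sum as a Riemann--Stieltjes integral against the counting function $N_\K$. The plan is: first set up the integral representation, then substitute the decomposition $N_\K(T)=\frac{T}{\pi}\log\bigl((\frac{T}{2\pi e})^{n_\K}\disc_\K\bigr)+R_\K(T)$ from~\eqref{eq:C26}, so the sum splits into a ``main'' contribution coming from the smooth part of $N_\K$ and an ``error'' contribution controlled by Trudgian's bound $|R_\K(T)|\le\tilde R_\K(T)$ in~\eqref{eq:C27}. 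The two pieces should respectively produce the $\frac{\asinh(c/u)}{\pi}W_\K(|t|)$ term and the $\frac{d_1}{u}W_\K(|t|)+\frac{d_2n_\K+d_3}{u}$ term.

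\textbf{Key steps.} Assume WLOG $t>0$ (the zeros are symmetric about the real axis, and $W_\K$ depends only on $|t|$). By positivity of each summand, enlarge the sum to count zeros with imaginary part in $[t-c,t+c]$ regardless of the sign of $\gamma$; since $t>c+1>c$, this window lies entirely in $\gamma>0$, and the number of such zeros with imaginary part in $[t-c,x]$ equals $\frac12\bigl(N_\K(x)-N_\K((t-c)^-)\bigr)$ for $x\le t+c$. Write the sum as a Stieltjes integral $\int_{t-c}^{t+c}\frac{1}{|u+i(\gamma-t)|}\,\mathrm dn(\gamma)$ where $n(\gamma)$ is this counting function, i.e. $\mathrm dn(\gamma)=\frac12\,\mathrm dN_\K(\gamma)$ on the window. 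Now substitute $N_\K(\gamma)=\frac{\gamma}{\pi}\log\bigl((\frac{\gamma}{2\pi e})^{n_\K}\disc_\K\bigr)+R_\K(\gamma)$.

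For the smooth part: $\frac{\mathrm d}{\mathrm d\gamma}\Bigl[\frac{\gamma}{\pi}\log\bigl((\frac{\gamma}{2\pi e})^{n_\K}\disc_\K\bigr)\Bigr]=\frac1\pi\bigl(\log\disc_\K+n_\K\log\frac{\gamma}{2\pi}\bigr)=\frac1\pi W_\K(\gamma)$, which is increasing in $\gamma$, hence bounded above on the window by $W_\K(t+c)\le W_\K(|t|)+n_\K\log\frac{t+c}{t}$; but a cleaner route is to note $W_\K$ is concave and, since $|t|>c+1$, to just bound it by its value at the endpoint or by $W_\K(|t|)$ directly after observing the relevant integral $\int_{-c}^{c}\frac{\mathrm dv}{|u+iv|}\cdot\frac12=\int_0^c\frac{\mathrm dv}{\sqrt{u^2+v^2}}=\asinh(c/u)$ picks up the factor $\frac{\asinh(c/u)}{\pi}$. (Here one uses $\int_0^c\frac{\mathrm dv}{\sqrt{u^2+v^2}}=\asinh(c/u)$.) For the $R_\K$ part: integrate by parts, $\int_{t-c}^{t+c}\frac{1}{|u+i(\gamma-t)|}\,\mathrm dR_\K(\gamma)$, moving the derivative onto the kernel $\gamma\mapsto\frac{1}{\sqrt{u^2+(\gamma-t)^2}}$, whose total variation on the window is $2\cdot(\frac1u-\frac{1}{\sqrt{u^2+c^2}})\le\frac2u$ (it increases then decreases, peaking at $\gamma=t$), plus boundary terms bounded by $\frac{|R_\K(t\pm c)|}{\sqrt{u^2+c^2}}$. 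Using $|R_\K|\le\tilde R_\K=d_1W_\K+d_2n_\K+d_3$ and the monotonicity/concavity of $W_\K$ to replace $W_\K(t\pm c)$ by $W_\K(|t|)$ (valid since $t-c>1$ so all arguments stay in the increasing regime and near $|t|$), collect everything into $\frac{d_1}{u}W_\K(|t|)+\frac{d_2n_\K+d_3}{u}$. Summing the two contributions gives the claimed inequality.

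\textbf{Main obstacle.} The delicate point is the bookkeeping in the $R_\K$ estimate: handling the Stieltjes integral against a function of bounded variation that is only known through its absolute-value bound $\tilde R_\K$, controlling both the boundary terms and the variation-of-kernel term so that the sum of their coefficients collapses exactly to $d_1/u$ (and not, say, $2d_1/u$ or $3d_1/u$). This requires exploiting that the kernel is unimodal with peak value $1/u$ and that $W_\K$ is monotone increasing on $[t-c,t+c]$ so its supremum there is $W_\K(t+c)$, which one then wants to bound by $W_\K(|t|)$ up to absorbing the tiny excess $n_\K\log\frac{t+c}{t}$ into the already-present $n_\K$ term—this last absorption is where the hypothesis $|t|>c+1$ is used quantitatively. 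The smooth-part estimate, by contrast, is a clean elementary integral.
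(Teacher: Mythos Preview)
Your overall architecture matches the paper's proof: write the sum as $\frac12\int_{t-c^-}^{t+c^+}\frac{dN_\K(\gamma)}{|u+i(\gamma-t)|}$, split $N_\K$ into its smooth part plus $R_\K$ via~\eqref{eq:C26}, and integrate the $R_\K$ piece by parts. The smooth part produces the $\frac{\asinh(c/u)}{\pi}$ factor and the $R_\K$ part produces $\tilde R_\K(t)/u$.

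However, your diagnosis of the ``main obstacle'' is wrong, and this is precisely where the content lies. There is \emph{no} excess term $n_\K\log\frac{t+c}{t}$ to absorb, and the hypothesis $|t|>c+1$ is \emph{not} used for any such absorption: its only role is to ensure the window $[t-c,t+c]$ lies in $(1,\infty)$, so that the factor $\tfrac12$ in front of $dN_\K$ is legitimate and Trudgian's bound~\eqref{eq:C27} (stated for $T\ge1$) applies throughout. The mechanism that makes the constants collapse \emph{exactly} is concavity combined with the \emph{evenness} of the kernels about $\gamma=t$. For the smooth part, since $v\mapsto\frac{1}{|u+iv|}$ is even and $W_\K$ is concave, pairing $v$ with $-v$ gives $\tfrac12(W_\K(t+v)+W_\K(t-v))\le W_\K(t)$, hence
\[
\frac{1}{2\pi}\int_{-c}^{c}\frac{W_\K(t+v)}{|u+iv|}\,dv
\le \frac{W_\K(t)}{2\pi}\int_{-c}^{c}\frac{dv}{|u+iv|}
= \frac{\asinh(c/u)}{\pi}\,W_\K(t),
\]
with no slack. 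For the $R_\K$ part, integration by parts gives a boundary contribution $\frac{\tilde R_\K(t+c)+\tilde R_\K(t-c)}{2\sqrt{u^2+c^2}}$ plus a bulk term $\frac12\int_{-c}^{c}\tilde R_\K(t+v)\,\frac{|v|\,dv}{(u^2+v^2)^{3/2}}$. Both weights are even in $v$, and $\tilde R_\K$ is concave, so the same pairing replaces $\tilde R_\K(t\pm v)$ by $\tilde R_\K(t)$; the resulting coefficients $\frac{1}{\sqrt{u^2+c^2}}$ and $\frac1u-\frac{1}{\sqrt{u^2+c^2}}$ then telescope to $\frac{\tilde R_\K(t)}{u}$ exactly. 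If instead you bound $\tilde R_\K$ on the interval by its supremum $\tilde R_\K(t+c)$ via monotonicity, as your last paragraph suggests, the coefficients do \emph{not} telescope and the stated constants $d_1,d_2,d_3$ leave no room to absorb the discrepancy.
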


\begin{proof}
Without loss of generality we can assume $t>0$. We write the sum as an integral in the density of zeros:
\begin{align*}
\sum_{|\gamma-t|\leq c} \frac{1}{|u + i(\gamma-t)|}
&= \int_{t-c^-}^{t+c^+} \frac{\d(N_\K(\gamma))}{2|u + i(\gamma-t)|},
\end{align*}
where the factor $\tfrac{1}{2}$ appears because only zeros with positive imaginary part matter, since
$t-c>1$. By~\eqref{eq:C26} this is
\begin{align*}
&= \int_{-c}^{c} \frac{W_\K(\gamma+t)}{2\pi|u + i\gamma|} \d\gamma
   +\int_{-c^-}^{c^+}\frac{\d(R_\K(\gamma+t))}{2|u + i\gamma|}.
\end{align*}
$W_\K$ is a concave map, thus
\begin{align*}
\int_{-c}^{c} \frac{W_\K(\gamma+t)}{2\pi|u + i\gamma|} \d\gamma
\leq \int_{-c}^{c} \frac{\d\gamma}{2\pi|u + i\gamma|} W_\K(t)
 =   \frac{\asinh(c/u)}{\pi} W_\K(t).
\end{align*}
Moreover, integrating by parts and using the upper bound $|R_\K| \leq \tilde{R}_\K$ in~\eqref{eq:C27} we
get
\begin{align*}
\Big|\int_{-c^-}^{c^+}\frac{\d(R_\K(\gamma+t))}{2|u + i\gamma|}\Big|
&\leq \frac{\tilde{R}_\K(t+c) + \tilde{R}_\K(t-c)}{2(u + c^2)^{1/2}}
   + \frac{1}{2}\int_{-c}^{c}\tilde{R}_\K(\gamma+t) \frac{|\gamma|\d \gamma}{(u^2 + \gamma^2)^{3/2}}
\intertext{and since $\tilde{R}_\K$ is also a concave map for positive arguments, we get}
&\leq \frac{\tilde{R}_\K(t)}{(u + c^2)^{1/2}}
   + \frac{\tilde{R}_\K(t)}{2}\int_{-c}^{c}\frac{|\gamma|\d \gamma}{(u^2 + \gamma^2)^{3/2}}
%
%  =\frac{\tilde{R}_\K(t)}{(u + 4)^{1/2}}
%   + \int_{0}^{2}\frac{\gamma\d \gamma}{(u^2 + \gamma^2)^{3/2}} \tilde{R}_\K(t)
%
  =\frac{\tilde{R}_\K(t)}{u}.
\end{align*}
\end{proof}

\begin{lemma}\label{lem:C5}
For every real $t$ one has:
\[
\Big|\frac{\Gamma'_\K}{\Gamma_\K}\Big(\frac{1}{4}+it\Big)-\frac{\Gamma'_\K}{\Gamma_\K}(2+it)\Big|
\leq \frac{10 n_\K}{|1+4it|}.
\]
\end{lemma}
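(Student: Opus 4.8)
The plan is to reduce the difference of the two logarithmic derivatives of $\Gamma_\K$ to a sum of differences of the form $\tfrac{\Gamma'}{\Gamma}(z+it) - \tfrac{\Gamma'}{\Gamma}(w+it)$ coming from the individual gamma factors in~\eqref{eq:C9}, and then bound each such difference by an integral of $\tfrac{\Gamma'}{\Gamma}$ along a horizontal segment. Concretely, from the definition of $\Gamma_\K$ in~\eqref{eq:C9} one has
\[
\frac{\Gamma'_\K}{\Gamma_\K}(s)
= (r_1+r_2)\Big[\frac{1}{2}\frac{\Gamma'}{\Gamma}\Big(\frac{s}{2}\Big) - \frac{1}{2}\log\pi\Big]
 + r_2\Big[\frac{1}{2}\frac{\Gamma'}{\Gamma}\Big(\frac{s+1}{2}\Big) - \frac{1}{2}\log\pi\Big],
\]
so that the $\log\pi$ terms cancel in the difference and
\[
\frac{\Gamma'_\K}{\Gamma_\K}\Big(\frac{1}{4}+it\Big)-\frac{\Gamma'_\K}{\Gamma_\K}(2+it)
= \frac{r_1+r_2}{2}\Big[\frac{\Gamma'}{\Gamma}\Big(\frac{1}{8}+\frac{it}{2}\Big)-\frac{\Gamma'}{\Gamma}\Big(1+\frac{it}{2}\Big)\Big]
 + \frac{r_2}{2}\Big[\frac{\Gamma'}{\Gamma}\Big(\frac{5}{8}+\frac{it}{2}\Big)-\frac{\Gamma'}{\Gamma}\Big(\frac{3}{2}+\frac{it}{2}\Big)\Big].
\]
Since $r_1+2r_2=n_\K$, it suffices to bound each bracket by $\tfrac{20}{|1+4it|}$: then the total is at most $\tfrac{r_1+r_2}{2}\cdot\tfrac{20}{|1+4it|} + \tfrac{r_2}{2}\cdot\tfrac{20}{|1+4it|} \leq \tfrac{10 n_\K}{|1+4it|}$, using $r_1+2r_2 = n_\K$ (the coefficient of $r_1$ is $10$ and that of $r_2$ is $20$, matching $10 n_\K$ exactly).

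Next I would estimate each bracket by writing it as a contour integral: for $a<b$,
\[
\frac{\Gamma'}{\Gamma}(b+iv)-\frac{\Gamma'}{\Gamma}(a+iv) = \int_a^b \Big(\frac{\Gamma'}{\Gamma}\Big)'(x+iv)\,dx = \int_a^b \sum_{k=0}^\infty \frac{dx}{(x+iv+k)^2}.
\]
Bounding $|x+iv+k|^2 \geq (x+k)^2 + v^2$ and summing, one gets a bound of the shape $\int_a^b \big(\psi'\text{-type sum}\big)dx$, which for the segments in question ($a,b$ of size $O(1)$, $v = t/2$) is $O(1/(1+|v|)) = O(1/(1+|t|))$. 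Writing $|1+4it| = \sqrt{1+16t^2}$, one has $1/(1+|t|) \asymp 1/|1+4it|$ up to an absolute constant, so the required form follows; the only remaining work is to check that the absolute constant that emerges is at most $20$ (for the $r_1$-bracket, with the factor $\tfrac{r_1+r_2}{2}$ absorbing part of it) — this is a finite numerical verification, and there is comfortable room since the integrand decays like $1/v^2$ rather than $1/v$ away from a bounded region. Indeed one can be cruder: bound $|\big(\tfrac{\Gamma'}{\Gamma}\big)'(x+iv)| \leq \sum_k \tfrac{1}{(x+k)^2+v^2}$, compare the sum to $\int_0^\infty \tfrac{du}{u^2+v^2} + \tfrac{1}{x^2+v^2} = \tfrac{\pi}{2|v|} + \tfrac{1}{x^2+v^2}$ when $x \geq 0$ (monotonicity of $u\mapsto 1/(u^2+v^2)$), integrate over the short interval, and collect constants.

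The main obstacle is purely the bookkeeping of constants: one must ensure the four horizontal segments (two for each gamma factor) all lie in a region where the elementary bound above is valid and produces a constant comfortably below the target, and one must handle small $|t|$ (where $1/|1+4it|$ is bounded below by a positive constant, so any uniform bound on the brackets suffices — this can be done by a direct estimate or by noting $\tfrac{\Gamma'}{\Gamma}$ is bounded on the relevant compact horizontal segments shifted by $it$ with $|t|$ small, using $\Ree\big(\tfrac{\Gamma'}{\Gamma}(z)\big) = \log|z| + O(1)$ type estimates). No delicate analysis is needed; once the representation above is in place, the inequality follows by routine majorization, and the stated constant $10$ is exactly what the identity $r_1+2r_2=n_\K$ delivers when each bracket is bounded by $20/|1+4it|$.
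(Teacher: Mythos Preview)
Your decomposition into the two digamma brackets (with coefficients $\tfrac{r_1+r_2}{2}$ and $\tfrac{r_2}{2}$) is the paper's starting point. Where you diverge is in how each bracket is bounded. The paper does not integrate the trigamma series; instead it uses the functional equation $\psi(z) = -\tfrac{1}{z} + \psi(z+1)$ to isolate the dominant pole contribution exactly, writing for instance
\[
\frac{\Gamma'}{\Gamma}\Big(\frac{1}{8}+i\frac{t}{2}\Big) - \frac{\Gamma'}{\Gamma}\Big(1+i\frac{t}{2}\Big)
= -\frac{8}{1+4it} + \Big[\frac{\Gamma'}{\Gamma}\Big(\frac{9}{8}+i\frac{t}{2}\Big) - \frac{\Gamma'}{\Gamma}\Big(1+i\frac{t}{2}\Big)\Big],
\]
and then bounds the short-range remainder $\psi\big(\tfrac{9}{8}+i\tfrac{t}{2}\big)-\psi\big(1+i\tfrac{t}{2}\big)$ (now a difference over an interval of length $\tfrac{1}{8}$, with both arguments bounded away from the pole at $0$) via the Euler--Maclaurin expansion of $\log\Gamma$. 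The extracted term already has modulus $8/|1+4it|$, and the remainder contributes at most $2/|1+4it|$, uniformly in $t$; no split into small and large $|t|$ is needed.

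Your route via $|\psi'(x+iv)| \le \sum_{k\geq 0} ((x+k)^2+v^2)^{-1} \le \tfrac{\pi}{2|v|} + (x^2+v^2)^{-1}$, integrated over $x\in[\tfrac{1}{8},1]$, is correct and does meet the target for large $|t|$ with a decent margin: the main term is $\tfrac{7\pi}{16|v|}\approx 1.37/|v|$ against $20/|1+4it|\approx 2.5/|v|$ (with $v=t/2$). But your crude bound diverges as $|v|\to 0$, forcing the separate small-$|t|$ argument you mention; that argument is indeed easy (the bracket is continuous and bounded on the relevant compact set, while $|1+4it|^{-1}$ is bounded below there), but it is additional work that the paper's pole-extraction sidesteps. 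Both approaches succeed; the paper's is more uniform and makes the constant emerge directly from the explicit $8/(1+4it)$ term, while yours is more generic but leaves a residual two-range numerical check that you describe rather than carry out.
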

\begin{proof}
From the definition of $\Gamma_\K$ in~\eqref{eq:C9} we have
%\[
%\frac{\Gamma'_\K}{\Gamma_\K}(s)
%  = -\frac{1}{2}n_\K\log\pi
%    + r_2      \frac{\Gamma'}{\Gamma}\Big(\frac{s+1}{2}\Big)
%    +(r_1+r_2) \frac{\Gamma'}{\Gamma}\Big(\frac{s}{2}\Big),
%\]
%so that
\begin{multline*}
\frac{\Gamma'_\K}{\Gamma_\K}\Big(\frac{1}{4}+it\Big)-\frac{\Gamma'_\K}{\Gamma_\K}(2+it)
  = r_2       \Big(\frac{\Gamma'}{\Gamma}\Big(\frac{5}{8} +i\frac{t}{2}\Big)
                   -\frac{\Gamma'}{\Gamma}\Big(\frac{3}{2}+i\frac{t}{2}\Big)
              \Big)                                             \\
   +(r_1+r_2) \Big(\frac{\Gamma'}{\Gamma}\Big(\frac{1}{8}+i\frac{t}{2}\Big)
                   -\frac{\Gamma'}{\Gamma}\Big(1+i\frac{t}{2}\Big)
              \Big).
\end{multline*}
From the functional equation $s\Gamma(s)=\Gamma(s+1)$ we get
\begin{equation}\label{eq:C29}
\frac{\Gamma'}{\Gamma}\Big(\frac{1}{8}+i\frac{t}{2}\Big)
                    -\frac{\Gamma'}{\Gamma}\Big(1+i\frac{t}{2}\Big)
= - \frac{8}{1+4it} + \frac{\Gamma'}{\Gamma}\Big(\frac{9}{8}+i\frac{t}{2}\Big)
                    -\frac{\Gamma'}{\Gamma}\Big(1+i\frac{t}{2}\Big),
\end{equation}
now we proceed as in Lemma~\ref{lem:C1}: from the Euler--Maclaurin summation formula for $\log\Gamma(s)$
the difference of the logarithmic derivatives is at most
\[
\Big|\log\Big(\frac{\frac{9}{8}+i\frac{t}{2}}{1+i\frac{t}{2}}\Big)\Big|
+ \frac{1}{2} \frac{1/8}{|\frac{9}{8}+i\frac{t}{2}|\cdot|1+i\frac{t}{2}|}
+ \frac{1}{12}\Big|\frac{1}{(\frac{9}{8}+i\frac{t}{2})^2}-\frac{1}{(1+i\frac{t}{2})^2}\Big|
+ \frac{1}{3}\int_0^{+\infty}\frac{\d u}{|u+1+it/2|^3}.
\]
The absolute value of the logarithm is lower than $-\log\big(1-\frac{1/8}{|1+i\frac{t}{2}|}\big)$, and the
integral is $4/(4+t^2+2\sqrt{4+t^2})$. Using these bounds one proves that~\eqref{eq:C29} is bounded by
$10/|1+4it|$.
%
%G f1(t) = abs(psi(1/8+I*t/2) - psi(1+I*t/2))
%G {
%G g1(t)=8/abs(1+4*I*t)
%G      - log(1-abs((1/8)/(1+I*t/2)))
%G      + 1/2*abs(1/8/(9/8+I*t/2)/(1+I*t/2))
%G      + 1/12*abs(1/(9/8+I*t/2)^2-1/(1+I*t/2)^2)
%G      + 1/3/sqrt(1+t^2/4)/(sqrt(1+t^2/4)+1)
%G }
%G ploth(t=0,10,[f1(t),g1(t),10/sqrt(1+16*t^2)])
%G ploth(t=0,10,[f1(t),g1(t),10/sqrt(1+16*t^2)]*t)
%
%
%G f2(t) = abs(psi(5/8+I*t/2) - psi(3/2+I*t/2))
%G {
%G g2(t)=
%G      - log(1-abs((7/8)/(3/2+I*t/2)))
%G      + 1/2*abs(7/8/(5/8+I*t/2)/(3/2+I*t/2))
%G      + 1/12*abs(1/(5/8+I*t/2)^2-1/(3/2+I*t/2)^2)
%G      + 1/3*(5/8)^2/sqrt((5/8)^2+t^2/4)/(sqrt((5/8)^2+t^2/4)+1)
%G }
%G ploth(t=0,10,[f2(t),g2(t),10/sqrt(1+16*t^2)])
%G ploth(t=0,10,[f2(t),g2(t),10/sqrt(1+16*t^2)]*t)
%
The same argument applied to the other difference of gamma functions completes the proof.
\end{proof}

In order to prove that $f_\K(\sigma+it)\leq \tilde{f}_\K(\sigma+it)$ we need an upper bound for
$-\frac{\zeta'_\K}{\zeta_\K}(s)$ in the critical strip, and for this purpose we follow the argument used
in~\cite[Th. 14.4]{Titchmarsh1} for the Riemann zeta function. In our setting, however, the argument will
be considerably complicated by the need of good explicit constants. Let $\sigma =
\Ree(s)\in(\tfrac{1}{2},1)$ and let $\delta$ be a parameter in $(0,1)$. We get
\[
-\sum_{n=1}^\infty \frac{\tilde{\Lambda}_\K(n)}{n^s}e^{-\delta n}
=
\frac{1}{2\pi i}\int_{2-i\infty}^{2+i\infty} \frac{\zeta'_\K}{\zeta_\K}(z) \Gamma(z-s)\delta^{s-z}\d z.
\]
Moving the integration line to $\Ree(z)=\frac{1}{4}$ we get the equality
\begin{align}
-\frac{\zeta'_\K}{\zeta_\K}(s)
 ={}& \sum_{n=1}^\infty \frac{\tilde{\Lambda}_\K(n)}{n^s}e^{-\delta n}
 -    \Gamma(1-s)\delta^{s-1}
 +    \sum_\rho \Gamma(\rho-s)\delta^{s-\rho}
 +    \frac{1}{2\pi i}\int_{1/4-i\infty}^{1/4+i\infty} \frac{\zeta'_\K}{\zeta_\K}(z) \Gamma(z-s)\delta^{s-z}\d z   \notag\\
:={}& I + II + III + IV;                                                                                          \label{eq:C30}
\end{align}
here $I$ is the value of the original integral, $II$ comes from the pole of $\zeta_\K$ at $z=1$, $III$
from the nontrivial zeros, and $-\frac{\zeta'_\K}{\zeta_\K}(s)$ from the pole of $\Gamma(z-s)$ at $z=s$:
the Cauchy theorem is applicable here since $-\frac{\zeta'_\K}{\zeta_\K}(s)$ grows polynomially along the
vertical lines, while the gamma function decays exponentially. The following lemmas provide bounds for
$I$--$IV$ and will be combined into a suitable bound for $\frac{\zeta'_\K}{\zeta_\K}(s)$ in
Lemma~\ref{lem:C12}.

\begin{lemma}[Bound of $I$]\label{lem:C6}
Assume RH. Let $\sigma\in(\frac{1}{2},1)$ and $\delta>0$. Then
\[
\sum_{n=1}^\infty \frac{\tilde{\Lambda}_\K(n)}{n^\sigma}\,e^{-\delta n}
\leq n_\K\sum_{n=1}^\infty \frac{\Lambda(n)}{n^\sigma}\,e^{-\delta n}
\leq \Big(\frac{\delta^{\sigma-1}}{1-\sigma}
          + \frac{0.07}{2\sigma-1}
          + 4
     \Big)n_\K.
\]
\end{lemma}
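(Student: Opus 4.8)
The plan is to establish the first inequality by a direct comparison of Dirichlet coefficients, and the second by an explicit estimate of the resulting single-variable sum.

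First I would note that the left inequality is immediate from the coefficient bound $\tilde{\Lambda}_\K(n)\leq n_\K\Lambda(n)$ recorded just after~\eqref{eq:C6}, since $n^{-\sigma}e^{-\delta n}\geq 0$ for all $n\geq 1$; one simply sums this termwise. So the content is the right inequality, which after dividing by $n_\K$ reads
\[
\sum_{n=1}^\infty \frac{\Lambda(n)}{n^\sigma}e^{-\delta n}
\leq \frac{\delta^{\sigma-1}}{1-\sigma} + \frac{0.07}{2\sigma-1} + 4
\qquad\forall\,\sigma\in(\tfrac12,1),\ \delta>0.
\]
The natural route is the Mellin/contour representation
\[
\sum_{n=1}^\infty \frac{\Lambda(n)}{n^\sigma}e^{-\delta n}
= \frac{1}{2\pi i}\int_{c-i\infty}^{c+i\infty}\Big(-\frac{\zeta'}{\zeta}(\sigma+w)\Big)\Gamma(w)\delta^{-w}\,dw
\]
for any $c>1-\sigma$ (so that $\sigma+c>1$), which is the $\K=\Q$, $t=0$ instance of the identity displayed before~\eqref{eq:C30}. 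Shifting the contour to the left past the pole of $\Gamma(w)$ at $w=0$ (residue $-\zeta'/\zeta(\sigma)$) and past the pole of $-\zeta'/\zeta(\sigma+w)$ at $w=1-\sigma$ (residue $\delta^{\sigma-1}$, with the sign working out to $+\delta^{\sigma-1}\Gamma(1-\sigma)$), one lands on a line $\Ree(w)=\tfrac14-\sigma$ (matching the choice made in~\eqref{eq:C30}), where $\Ree(\sigma+w)=\tfrac14$.

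Thus I would decompose the sum as $\delta^{\sigma-1}\Gamma(1-\sigma) - \frac{\zeta'}{\zeta}(\sigma) + (\text{remaining line integral})$. For the first term, $\Gamma(1-\sigma)\leq \frac{1}{1-\sigma}$ on $(0,1)$ since $\Gamma(1-\sigma)=\frac{1}{1-\sigma}\Gamma(2-\sigma)$ and $\Gamma(2-\sigma)\leq 1$ there; this produces the main term $\frac{\delta^{\sigma-1}}{1-\sigma}$. For the second term, $-\frac{\zeta'}{\zeta}(\sigma)$ with $\sigma\in(\tfrac12,1)$: using the partial-fraction/Hadamard expansion under RH one bounds $-\frac{\zeta'}{\zeta}(\sigma)\leq \frac{1}{1-\sigma}+O(1)$, and more carefully one can extract a clean bound of the shape $\frac{0.07}{2\sigma-1}+ (\text{const})$ — here the $\frac{1}{2\sigma-1}$-type singularity as $\sigma\to\tfrac12^+$ is genuine, coming from the zeros $\rho=\tfrac12\pm i\gamma$ contributing $\Ree\frac{1}{\sigma-\rho}=\frac{\sigma-1/2}{(\sigma-1/2)^2+\gamma^2}$, and one has to be slightly clever to get the small constant $0.07$ rather than something larger. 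For the remaining integral on $\Ree(\sigma+w)=\tfrac14$, I would bound $|\zeta'/\zeta(\tfrac14+iy)|$ by the functional equation (it grows only polynomially, like $|y|^{\text{const}}\log|y|$) against the exponential decay of $|\Gamma(\tfrac14-\sigma+iy)|$; Lemma~\ref{lem:C2}-type estimates (the integrals $\int_\R|\Gamma(u+iy)|\,dy\leq 4.73$ and its logarithmic variant) are exactly tailored for this, and $\delta^{-w}=\delta^{\sigma-1/4}|\delta^{-iy}|=\delta^{\sigma-1/4}\leq \delta^{1/4}$... wait, one needs $\delta\leq 1$ here for a clean bound, but for $\delta\geq 1$ the whole sum is trivially small, so one can split on $\delta\lessgtr 1$. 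This tail contributes only an $O(1)$ amount, absorbed into the constant $4$.

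The main obstacle I anticipate is \emph{the explicit constant bookkeeping}, not the structure: getting $-\frac{\zeta'}{\zeta}(\sigma)\leq \frac{0.07}{2\sigma-1}+(\text{small const})$ with honest numerics requires care near $\sigma=\tfrac12$ (the lowest zeros $\gamma\approx 14.13,\dots$ dominate, and one wants to use RH to write $-\frac{\zeta'}{\zeta}(\sigma)$ in terms of $f_\Q$ and the already-available gamma-factor estimates of Lemma~\ref{lem:C1}), and separately, bounding the vertical integral on the $\tfrac14$-line with a constant small enough to fit under $4$ after adding the $-\zeta'/\zeta(\sigma)$ contribution. A secondary nuisance is justifying the contour shift (polynomial growth of $\zeta'/\zeta$ on vertical lines versus exponential decay of $\Gamma$), but this is standard and the paper already invokes it verbatim for~\eqref{eq:C30}. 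Everything else — the termwise comparison, the $\Gamma(1-\sigma)\leq \frac{1}{1-\sigma}$ bound, the split on $\delta\lessgtr 1$ — is routine.
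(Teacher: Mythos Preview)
Your route is genuinely different from the paper's, and as written it has two concrete problems.

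First, when you shift the line of integration from $\Ree(\sigma+w)>1$ down to $\Ree(\sigma+w)=\tfrac14$ you cross the critical line $\Ree(\sigma+w)=\tfrac12$, which under RH carries all the nontrivial zeros of $\zeta$. Your decomposition is therefore missing the residue sum $\sum_\rho \Gamma(\rho-\sigma)\delta^{\sigma-\rho}$; this is exactly the term $III$ in~\eqref{eq:C30}, specialized to $\K=\Q$ and $t=0$. Numerically it is very small (the lowest $|\gamma|$ exceeds $14$ and $\Gamma$ decays like $e^{-\pi|\gamma|/2}$), but it is not zero and omitting it is a logical gap.

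Second, your account of the $\tfrac{0.07}{2\sigma-1}$ term is mistaken. The quantity $-\zeta'/\zeta(\sigma)$ has \emph{no} singularity at $\sigma=\tfrac12$: each zero $\rho=\tfrac12+i\gamma$ contributes $(\sigma-\tfrac12-i\gamma)^{-1}$, which tends to the finite limit $i/\gamma$ as $\sigma\to\tfrac12^+$ because $\gamma\neq 0$. In fact $-\zeta'/\zeta(\sigma)$ is continuous on $[\tfrac12,1)$ and diverges (to $-\infty$) only as $\sigma\to 1^-$. So in your scheme no $\tfrac{1}{2\sigma-1}$ term arises at all; if the rest of the bookkeeping were carried through you would actually be proving something \emph{stronger} than the stated inequality, but the sketch does not reflect this.

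The paper's argument is entirely different and more elementary: it writes, via two partial summations,
\[
\sum_{n\geq 1}\frac{\Lambda(n)}{n^\sigma}e^{-\delta n}
=\int_{2^-}^{\infty}\psi^{(1)}(x)\Big[\frac{e^{-\delta x}}{x^\sigma}\Big]''\,dx,
\qquad
\psi^{(1)}(x):=\int_0^x\psi(u)\,du,
\]
and then inserts the explicit RH-conditional bound $|\psi^{(1)}(x)-\tfrac{x^2}{2}|\leq 0.0462\,x^{3/2}+1.838\,x$ from~\cite{GrenieMolteni2}. The main part $\tfrac{x^2}{2}$ integrates to $\delta^{\sigma-1}\Gamma(1-\sigma)\leq \tfrac{\delta^{\sigma-1}}{1-\sigma}$; the $x^{3/2}$ error, integrated against the completely monotone kernel $[e^{-\delta x}x^{-\sigma}]''$, is what produces $0.0462\cdot\tfrac32\cdot\tfrac{1}{2\sigma-1}\leq \tfrac{0.07}{2\sigma-1}$; all remaining pieces are collected into the constant $4$. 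Thus in the paper RH enters only through the size of the remainder in the prime-counting function, and no contour shifting, no evaluation of $-\zeta'/\zeta$ in the strip, and no line integral on $\Ree s=\tfrac14$ is needed for this lemma.
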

\begin{proof}
The first inequality is an immediate consequence of the inequality $\tilde{\Lambda}_\K(n) \leq
n_\K\Lambda(n)$. For the second one, let $\psi^{(1)}(x) := \int_0^x\psi(u)\d u = \sum_{n\leq
x}\Lambda(n)(x-n)$. Then
\begin{equation}\label{eq:C31}
\Big|\psi^{(1)}(x) - \frac{x^2}{2}\Big|
\leq   0.0462 x^{3/2}
     + 1.838  x
\qquad \forall x\geq 1,
\end{equation}
(see~\cite[Th.~1.1]{GrenieMolteni2}; there the claim is stated for $x\geq 3$, but it actually holds for
$x\geq 1$).
%G g(x) = 0.0462*x^(3/2) + 1.838*x + 1.9851 + 0.5097/x;
%G g2(x) = 0.0462*x^(3/2) + 1.838*x;
%G PSI1(x) = sum(n=0,x,vonMangoldt(n)*(x-n));
%G h(x)= g2(x) - abs(PSI1(x)-x^2/2)
%G \\il Th 1.1 dimostra che h(x) \geq 0 per x\geq 1000. Una verifica diretta mostra che
%G \\essa vale anche per x\geq 1
%G ploth(x=1,4,h(x))
%G
Thus, by partial summation we get
\begin{align*}
\sum_{n=1}^\infty \frac{\Lambda(n)}{n^\sigma}\,e^{-\delta n}
%&= \psi(x)\frac{e^{-\delta x}}{x^\sigma}\Big|_1^{+\infty} - \int_1^{+\infty} \psi(x)\Big[\frac{e^{-\delta x}}{x^\sigma}\Big]' \d x\\
%&=  - \int_1^{+\infty} \psi(x)\Big[\frac{e^{-\delta x}}{x^\sigma}\Big]' \d x\\
%&= -\psi^{(1)}(x)\Big[\frac{e^{-\delta x}}{x^\sigma}\Big]'\Big|_1^{+\infty} + \int_1^{+\infty} \psi^{(1)}(x)\Big[\frac{e^{-\delta x}}{x^\sigma}\Big]'' \d x\\
&= \int_{2^-}^{+\infty} \psi^{(1)}(x)\Big[\frac{e^{-\delta x}}{x^\sigma}\Big]'' \d x
\intertext{which we write as}
&= \int_{2}^{+\infty} \frac{x^2}{2}\Big[\frac{e^{-\delta x}}{x^\sigma}\Big]'' \d x
 + \int_{2^-}^{+\infty} \Big[\psi^{(1)}(x)-\frac{x^2}{2}\Big]\Big[\frac{e^{-\delta x}}{x^\sigma}\Big]'' \d x\\
%
%&= \frac{x^2}{2}\Big[\frac{e^{-\delta x}}{x^\sigma}\Big]'\Big|_2^{+\infty}
%   -\int_2^{+\infty} x\Big[\frac{e^{-\delta x}}{x^\sigma}\Big]' \d x
% + \int_{2^-}^{+\infty} \Big[\psi^{(1)}(x)-\frac{x^2}{2}\Big]\Big[\frac{e^{-\delta x}}{x^\sigma}\Big]'' \d x\\
%%
%&= \frac{x^2}{2}\Big[\frac{e^{-\delta x}}{x^\sigma}\Big]'\Big|_2^{+\infty}
%   -x\Big[\frac{e^{-\delta x}}{x^\sigma}\Big]\Big|_2^{+\infty}
%   +\int_2^{+\infty} \frac{e^{-\delta x}}{x^\sigma} \d x
% + \int_{2^-}^{+\infty} \Big[\psi^{(1)}(x)-\frac{x^2}{2}\Big]\Big[\frac{e^{-\delta x}}{x^\sigma}\Big]'' \d x\\
%%
&= (2+\sigma+2\delta)\frac{e^{-2\delta}}{2^\sigma}
   + \delta^{\sigma-1}\int_{2\delta}^{+\infty} \frac{e^{-u}}{u^\sigma} \d u
 + \int_{2^-}^{+\infty} \Big[\psi^{(1)}(x)-\frac{x^2}{2}\Big]\Big[\frac{e^{-\delta x}}{x^\sigma}\Big]'' \d x\\
%
%&= (2+\sigma+2\delta)\frac{e^{-2\delta}}{2^\sigma}
%   + \OO\Big(\delta^{\sigma-1}\int_{0}^{+\infty} \frac{e^{-u}}{u^\sigma} \d u\Big)
% + \int_{2^-}^{+\infty} \Big[\psi^{(1)}(x)-\frac{x^2}{2}\Big]\Big[\frac{e^{-\delta x}}{x^\sigma}\Big]'' \d x\\
%
&\leq (2+\sigma+2\delta)\frac{e^{-2\delta}}{2^\sigma}
   + \delta^{\sigma-1}\Gamma(1-\sigma)
   + \int_{2^-}^{+\infty} \Big[\psi^{(1)}(x)-\frac{x^2}{2}\Big]\Big[\frac{e^{-\delta x}}{x^\sigma}\Big]'' \d x.
\end{align*}
\enlargethispage{3\baselineskip}
The function $e^{-\delta x}x^{-\sigma}$ is completely monotone, thus
\begin{align*}
\int_2^{+\infty} x^{3/2}\Big|\Big[\frac{e^{-\delta x}}{x^\sigma}\Big]''\Big| \d x
&= \int_2^{+\infty} x^{3/2}\Big[\frac{e^{-\delta x}}{x^\sigma}\Big]'' \d x \\
%
%&= x^{3/2}\Big[\frac{e^{-\delta x}}{x^\sigma}\Big]'\Big|_2^{+\infty}
%  - \frac{3}{2}\int_2^{+\infty} x^{1/2}\Big[\frac{e^{-\delta x}}{x^\sigma}\Big]' \d x\\
%%
%&= \sqrt{2}(\sigma+2\delta)\frac{e^{-2\delta}}{2^\sigma}
%  - \frac{3}{2}\int_2^{+\infty} x^{1/2}\Big[\frac{e^{-\delta x}}{x^\sigma}\Big]' \d x\\
%%
%&= \sqrt{2}(\sigma+2\delta)\frac{e^{-2\delta}}{2^\sigma}
%  - \frac{3}{2}x^{1/2}\Big[\frac{e^{-\delta x}}{x^\sigma}\Big]\Big|_2^{+\infty}
%  + \frac{3}{4}\int_2^{+\infty} x^{-1/2}\Big[\frac{e^{-\delta x}}{x^\sigma}\Big] \d x\\
%%
&= \sqrt{2}\Big(\frac{3}{2}+\sigma+2\delta\Big)\frac{e^{-2\delta}}{2^\sigma}
  + \frac{3}{4}\int_2^{+\infty} e^{-\delta x} x^{-1/2-\sigma}\d x.
\intertext{The last integral is at most
$\min(e^{-2\delta}\delta^{-1},e^{-2\delta}(\sigma-1/2)^{-1})$, but later we will choose $\sigma$ and
$\delta$ such that the minimum comes from the term in $\sigma$, thus we write}
&\leq \sqrt{2}\Big(\frac{3}{2}+\sigma+2\delta\Big)\frac{e^{-2\delta}}{2^\sigma}
  + \frac{3}{2}\frac{e^{-2\delta}}{2\sigma-1}.
\end{align*}
Moreover,
\[
\int_2^{+\infty} x\Big|\Big[\frac{e^{-\delta x}}{x^\sigma}\Big]''\Big| \d x
 = \int_2^{+\infty} x\Big[\frac{e^{-\delta x}}{x^\sigma}\Big]'' \d x
 = (1+\sigma+2\delta)\frac{e^{-2\delta}}{2^\sigma},
\]
hence, recalling~\eqref{eq:C31} and using the inequality $\Gamma(1-\sigma)\leq (1-\sigma)^{-1}$ for
$\sigma\in(0,1)$, we get
\begin{align*}
\sum_{n=1}^\infty \frac{\Lambda(n)}{n^\sigma}\,e^{-\delta n}
\leq& (2+\sigma+2\delta)\frac{e^{-2\delta}}{2^\sigma}
     + \frac{\delta^{\sigma-1}}{1-\sigma}
     + 0.0462 \Big(\sqrt{2}\Big(\frac{3}{2}+\sigma+2\delta\Big)\frac{e^{-2\delta}}{2^\sigma}
                   + \frac{3}{2}\frac{e^{-2\delta}}{2\sigma-1}\Big)             \\
    &+ 1.838  (1+\sigma+2\delta)\frac{e^{-2\delta}}{2^\sigma}
\leq  \frac{\delta^{\sigma-1}}{1-\sigma}
     + \frac{0.07}{2\sigma-1}
     + 4.
\end{align*}
%
%G \\ la funzione g(s,d) decresce sia in d che in s, quindi il suo massimo e' g(1/2,0)
%G g(s,d)=((2+s+2*d)+0.0462*sqrt(2)*(3/2+s+2*d)+1.838*(1+s+2*d))*exp(-2*d)/2^s
%G g(1/2,0)
%G \\%48 = 3.809660348697680335775238812
%
\end{proof}

\begin{lemma}[Bound of $II$]\label{lem:C7}
Let $\sigma \in(\frac{1}{2},1)$, $|t|\geq 2$ and $\delta\in(0,1)$.
Then
\[
|\Gamma(1-s)\delta^{s-1}| \leq \sqrt{2\pi}
e^{-\frac{\pi}{2}|t|}\delta^{\sigma-1}.
\]
\end{lemma}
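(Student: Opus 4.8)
The plan is to bound $|\Gamma(1-s)\delta^{s-1}|$ by treating the two factors separately. Since $\delta\in(0,1)$ and $\Ree(s-1) = \sigma-1 < 0$, we have $|\delta^{s-1}| = \delta^{\sigma-1}$ (the imaginary part of $s$ contributes only a unit-modulus phase $\delta^{it} = e^{it\log\delta}$). So it remains to show $|\Gamma(1-s)| \leq \sqrt{2\pi}\,e^{-\frac{\pi}{2}|t|}$ for $\sigma\in(\tfrac12,1)$ and $|t|\geq 2$, where $1-s = (1-\sigma) - it$ has real part in $(0,\tfrac12)$.

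The key step is an explicit decay estimate for the gamma function along vertical lines in the strip $\Ree(w)\in(0,\tfrac12)$. First I would use the reflection-type identity or rather the classical bound coming from $|\Gamma(x+iy)|^2 \leq |\Gamma(x)|^2 \cdot \frac{\pi y}{\sinh(\pi y)}\cdot(\text{correction})$; more directly, I would invoke the standard inequality
\[
|\Gamma(x+iy)| \leq |\Gamma(x)|\, e^{-\frac{\pi}{2}|y| + \text{small}}
\]
valid for $x$ in a bounded strip. A clean route: write $w = x+iy$ with $x\in(0,\tfrac12)$, and use $|\Gamma(w)|^2 = \Gamma(w)\Gamma(\bar w)$ together with the functional equation to relate to $\Gamma(x)\Gamma(1-x)/(\text{something})$; but the most economical is Stirling's formula with explicit error, which gives $|\Gamma(x+iy)| = \sqrt{2\pi}\,|x+iy|^{x-1/2}e^{-x}e^{-\frac{\pi}{2}|y|}|y|^{\cdots}(1+O(\cdot))$ as $|y|\to\infty$, and then to check the resulting bound holds for all $|y|\geq 2$ one monitors monotonicity. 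Because $x-\tfrac12 < 0$, the polynomial factor $|x+iy|^{x-1/2} \leq |y|^{x-1/2} \leq 1$ for $|y|\geq 1$, and $e^{-x} \leq 1$, so one is left needing the Stirling error factor at $|y|\geq 2$, $x\in(0,\tfrac12)$ to stay below $1$; the worst case is at the boundary $x\to 0$ and $|y|=2$, which one verifies numerically.

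Alternatively, and perhaps cleaner for an explicit constant, I would bound $|\Gamma(1-s)|$ using the product $|\Gamma(1-\sigma+it)| \le |\Gamma(1-\sigma)|$ is false, so instead use: $|\Gamma(w)| = |\Gamma(w+1)|/|w|$ to push $\Ree$ up if needed, but here $\Ree(1-s) = 1-\sigma \in (0,\tfrac12)$ is already small and positive, and the decay in $|t|$ is what matters. The sharpest elementary tool is
\[
|\Gamma(x+iy)| \leq \frac{\Gamma(x)\,\pi^{1/2}}{(\cosh \pi y)^{?}}
\]
— but to keep it rigorous I would simply cite the inequality $|\Gamma(x+iy)|\le \sqrt{2\pi}\,|y|^{x-1/2}e^{-\pi|y|/2}$ valid for $x\in[0,1]$, $|y|\ge 1$ (a standard consequence of Stirling with explicit remainder, of the same flavor as the estimates behind Lemma~\ref{lem:C2}), and then note $|y|^{x-1/2}\le 1$ since $x-\tfrac12\le 0$ and $|y|\ge 2\ge 1$. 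Combining, $|\Gamma(1-s)|\le \sqrt{2\pi}\,e^{-\pi|t|/2}$, and multiplying by $|\delta^{s-1}| = \delta^{\sigma-1}$ finishes the proof.

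The main obstacle is purely bookkeeping: pinning down the explicit Stirling remainder so that the clean bound $|\Gamma(x+iy)|\le \sqrt{2\pi}\,|y|^{x-1/2}e^{-\pi|y|/2}$ genuinely holds for \emph{all} $|y|\ge 2$ and not merely asymptotically. This requires either a reference to an explicit form of Stirling's formula (the error term $\mu(w)$ with $|\mu(w)|\le \frac{1}{12|w|}$ on the relevant region after applying the functional equation once or twice to move into $\Ree(w)\ge 1$) or a short direct verification; given the style of the paper, which relegates such gamma-function inequalities to an appendix, I expect the authors to do exactly this and the reader is meant to accept the estimate as routine.
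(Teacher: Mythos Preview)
Your reduction to showing $|\Gamma(w)|\le\sqrt{2\pi}\,e^{-\pi|y|/2}$ for $w=x+iy$ with $x\in(0,\tfrac12)$ and $|y|\ge 2$ is exactly the right target, and a direct Stirling argument can be made to work. The paper, however, takes a different and somewhat cleaner route: it bounds $|\Gamma(s)e^{\pi s/2}|$ on the closed strip $\{0\le\Ree(s)\le\tfrac12,\ \Imm(s)\ge c\}$ by the Phragm\'en--Lindel\"of principle, reducing everything to the boundary. On the edge $\Ree(s)=\tfrac12$ the reflection formula $\Gamma(s)\Gamma(1-s)=\pi/\sin(\pi s)$ gives $|\Gamma(\tfrac12+it)|^2=\pi/\cosh(\pi t)\le 2\pi e^{-\pi|t|}$ exactly, with no error term to track; on the remaining edges the Euler--Maclaurin expansion for $\log\Gamma$ is used. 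This buys the sharp constant $\sqrt{2\pi}$ essentially for free on the critical edge.

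One caution about your version: the intermediate inequality you propose to cite, $|\Gamma(x+iy)|\le\sqrt{2\pi}\,|y|^{x-1/2}e^{-\pi|y|/2}$ for $x\in[0,1]$ and $|y|\ge 1$, is \emph{false} at $x=0$, since $|\Gamma(iy)|^2=\pi/(y\sinh\pi y)>2\pi y^{-1}e^{-\pi y}$ for every $y>0$. The weaker bound you actually need (dropping the factor $|y|^{x-1/2}$) does hold on $x\in[0,\tfrac12]$, $|y|\ge 2$, but proving it directly from Stirling with explicit remainder is precisely the bookkeeping you flag as the obstacle. The paper's Phragm\'en--Lindel\"of argument avoids having to control that remainder uniformly across the strip: the maximum principle pushes the work to the two vertical lines, and the reflection formula handles the harder one exactly.
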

\begin{proof}
It is sufficient to prove that $|\Gamma(s)e^{\pi s/2}| \leq \sqrt{2\pi}$ for any $s\in D:=\{s\colon
\Ree(s)\in[0,\tfrac{1}{2}],\ \Imm(s)\geq 10\}$. By the Phragm\'{e}n-Lindel\"{o}f principle it is sufficient to
prove it for $s\in\partial D$. The claim for $\Ree(s)=\tfrac{1}{2}$ follows immediately from the
reflection formula $\Gamma(s)\Gamma(1-s) = \frac{\pi}{\sin\pi s}$. The claim for the other two lines may
be proved using the Euler--Maclaurin formula for the gamma function.
%
%G my(u=0.0);ploth(v=10,100,abs(gamma(u+I*v))*exp(Pi/2*v));
%G my(u=0.5);ploth(v=10,100,abs(gamma(u+I*v))*exp(Pi/2*v));
%G my(v=10 );ploth(u= 0,0.5,abs(gamma(u+I*v))*exp(Pi/2*v));
%
\end{proof}

\begin{lemma}[Bound of $III$]\label{lem:C8}
Assume GRH. Let $\sigma\in(\frac{1}{2},1)$, $|t|\geq 10$ and $\delta\in(0,1)$. Then
\[
\Big|\sum_\rho \Gamma(s-\rho)\delta^{s-\rho}\Big|
 \!\leq\! \delta^{\sigma-\tfrac{1}{2}}
      \Big[\Big(\frac{\log(\frac{1}{2\sigma-1})}{\pi} + \frac{0.64}{2\sigma-1} + 0.82\Big) W_\K(t)
           + \Big(\frac{13.9}{2\sigma-1} + 1.6\Big) n_\K
           + \frac{6.9}{2\sigma-1}
           + 0.8
      \Big].
\]
\end{lemma}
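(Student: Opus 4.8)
The plan is to split the sum over zeros according to the distance of the imaginary part $\gamma$ from $t$, writing $\sum_\rho = \sum_{|\gamma-t|\le 1} + \sum_{|\gamma-t|>1}$, and to estimate the two pieces by quite different means. Throughout, under GRH we have $\rho = \tfrac12 + i\gamma$, so $s-\rho = \alpha + i(t-\gamma)$ with $\alpha := \sigma - \tfrac12 \in (0,\tfrac12)$; hence $|\delta^{s-\rho}| = \delta^{\alpha}$ and the modulus we must control is $\delta^{\alpha}\sum_\rho |\Gamma(\alpha + i(t-\gamma))|$. This already explains the prefactor $\delta^{\sigma-1/2}$ in the statement. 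What remains is to bound $\sum_\rho |\Gamma(\alpha + i(t-\gamma))|$ by the bracketed expression.

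For the \emph{near} zeros ($|\gamma-t|\le 1$) I would use the fact that $\alpha \le \tfrac12$ is small and $|\gamma - t|$ is bounded, so $|\Gamma(\alpha + i(t-\gamma))|$ is large only because of the pole of $\Gamma$ near $0$; more precisely $|\Gamma(\alpha + iy)| \le |\Gamma(\alpha)| \le \frac{1}{\alpha} = \frac{2}{2\sigma-1}$ for $|y|$ small, or better, $|\Gamma(\alpha+iy)| \ll \frac{1}{|\alpha + iy|}$ on this range. That last bound is exactly the shape needed to invoke Lemma~\ref{lem:C4} with $u = \alpha$, $c = 1$: since $|t| \ge 10 > c+1$, Lemma~\ref{lem:C4} gives $\sum_{|\gamma-t|\le 1} \frac{1}{|\alpha + i(\gamma-t)|} \le \big(\tfrac{\asinh(1/\alpha)}{\pi} + \tfrac{d_1}{\alpha}\big) W_\K(t) + \tfrac{d_2 n_\K + d_3}{\alpha}$. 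Using $\asinh(1/\alpha) = \log\big(\tfrac1\alpha + \sqrt{1+\tfrac1{\alpha^2}}\big) \le \log(\tfrac{2}{\alpha}) + o(1) = \log\tfrac{1}{2\sigma-1} + \log 4 + \cdots$ produces the $\tfrac{1}{\pi}\log\tfrac{1}{2\sigma-1}$ term, while $d_1/\alpha = 0.317 \cdot \tfrac{2}{2\sigma-1}$ and $(d_2 n_\K + d_3)/\alpha$ supply pieces of the $\tfrac{0.64}{2\sigma-1}$, $\tfrac{13.9}{2\sigma-1}$, $\tfrac{6.9}{2\sigma-1}$ constants (here $d_2 \le 6.9157$, $d_3 = 3.482$, and $2d_3 \approx 6.9$, $2d_2 \approx 13.9$). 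The constant term $\log 4$ from the $\asinh$ expansion, together with the additive constants in Lemma~\ref{lem:C4}'s $W_\K$ coefficient, feeds the $0.82$ and the leftover numbers; one must also absorb the error between $|\Gamma(\alpha+iy)|$ and the simple $1/|\alpha+iy|$ bound over $|y|\le 1$, which is uniformly bounded and contributes only to the constants.

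For the \emph{far} zeros ($|\gamma - t| > 1$) the gamma factor decays, and I would bound $\sum_{|\gamma-t|>1}|\Gamma(\alpha + i(t-\gamma))|$ by comparing the sum to an integral against the density of zeros, exactly as in Lemma~\ref{lem:C4}'s proof: write it via $\d N_\K(\gamma)$, split off the smooth main term $\tfrac{1}{\pi}W_\K(\gamma)\,\d\gamma$ and the remainder $\d R_\K(\gamma)$, use concavity of $W_\K$ and the Trudgian bound $|R_\K|\le \tilde R_\K$ from~\eqref{eq:C27}. The main term gives $\tfrac{1}{\pi}W_\K(t)\int_{|y|>1}|\Gamma(\alpha+iy)|\,\d y$, and here I would replace $\alpha \in (0,\tfrac12)$ by shifting via $s\Gamma(s) = \Gamma(s+1)$ to move into the strip $[-\tfrac34,-\tfrac14]$ where Lemma~\ref{lem:C2} applies: more precisely $|\Gamma(\alpha+iy)| = \tfrac{|\Gamma(\alpha-1+iy)|}{|\alpha-1+iy|}$ with $\alpha - 1 \in (-1,-\tfrac12)$, and a further reflection covers $(-\tfrac34,-\tfrac14)$; then $\int_\R |\Gamma(u+iy)|\,\d y \le 4.73$ by~\eqref{eq:C25a} bounds this cleanly, with a factor $\tfrac{4.73}{\pi} \approx 1.5$ landing in the coefficient of $W_\K$ (consistent with the stated $0.82$ after the near-zeros contribution is also counted — some bookkeeping is needed to see the total coefficient of $W_\K$ is $\tfrac{1}{\pi}\log\tfrac1{2\sigma-1} + \tfrac{0.64}{2\sigma-1} + 0.82$). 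The $R_\K$ remainder, handled by integration by parts against $\d|\Gamma|$ and the concavity of $\tilde R_\K$, contributes $\tilde R_\K(t) \cdot (\text{bounded integral})$, i.e. terms proportional to $d_1 W_\K(t) + d_2 n_\K + d_3$, again feeding the constants.

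The main obstacle will be the bookkeeping of explicit constants: one must carefully track the $\asinh$ expansion, the shift/reflection losses when moving $\Gamma(\alpha + iy)$ into the range $[-\tfrac34,-\tfrac14]$ where Lemma~\ref{lem:C2} is available, and the split point between near and far zeros, so that all the pieces sum to exactly the claimed coefficients $\tfrac{1}{\pi}\log\tfrac{1}{2\sigma-1} + \tfrac{0.64}{2\sigma-1} + 0.82$ in front of $W_\K(t)$ and $\tfrac{13.9}{2\sigma-1} + 1.6$ in front of $n_\K$, with the residual $\tfrac{6.9}{2\sigma-1} + 0.8$. A secondary subtlety is the interchange of sum and integral in the contour-shift identity~\eqref{eq:C30} — but that is already justified in the text by the exponential decay of $\Gamma$ against the polynomial growth of $\zeta_\K'/\zeta_\K$, so here I only need the resulting series over zeros to converge absolutely, which follows from $|\Gamma(\alpha+i(t-\gamma))|$ decaying exponentially in $|\gamma|$ together with the polynomial growth of the zero-counting function $N_\K$.
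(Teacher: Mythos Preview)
Your skeleton matches the paper's: factor out $\delta^{\sigma-1/2}$ using GRH, split the zero sum by distance from $t$, and control the near part via the pointwise bound $|(u+iv)\Gamma(u+iv)|\le 1$ for $u\in(0,\tfrac12)$ (hence $|\Gamma(\alpha+i(\gamma-t))|\le 1/|\alpha+i(\gamma-t)|$) together with Lemma~\ref{lem:C4}. Two points diverge from the paper, and the second is a genuine gap.

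First, the paper splits at $|\gamma-t|=2$, not $1$. With $c=2$, Lemma~\ref{lem:C4} gives $\asinh\!\big(\tfrac{4}{2\sigma-1}\big)$, and the paper then uses $\asinh(4/u)\le\log(1/u)+\asinh 4$ to extract $\tfrac{1}{\pi}\log\tfrac{1}{2\sigma-1}$ plus $\asinh(4)/\pi\approx 0.67$; together with the far-zero contribution $0.15$ this makes the $0.82$. Your threshold $c=1$ would shift these numbers and you would have to rebalance, but this is cosmetic.

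Second, and more seriously, your far-zero treatment is not the paper's and, as written, does not close. The paper does \emph{not} integrate against $\d N_\K$ here. Instead it uses the exponential bound $|\Gamma(u+iv)|\le\sqrt{2\pi}\,e^{-\pi|v|/2}$ for $u\in(0,\tfrac12)$, $|v|\ge 2$ (Lemma~\ref{lem:C7}), groups the far zeros into windows of width $2$, bounds each window count by $n_\K(t\pm(2j+3);1)$ via Lemma~\ref{lem:C3}, and sums the resulting geometric series in $e^{-\pi}$. That is what yields the small coefficient $0.15$ in front of $W_\K(t)$ from the far part. Your route instead appeals to~\eqref{eq:C25a}, but that estimate is stated for $u\in[-\tfrac34,-\tfrac14]$ whereas here $\alpha\in(0,\tfrac12)$; the shift $\Gamma(\alpha+iy)=\Gamma(\alpha-1+iy)\cdot(\alpha-1+iy)$ lands in $(-1,-\tfrac12)$, still outside the stated range, and the ``further reflection'' you invoke is not available. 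Even if one granted a bound of size $4.73$, the resulting coefficient $4.73/\pi\approx 1.5$ on $W_\K(t)$ from the far zeros alone already exceeds the target $0.82$, so the constants cannot be assembled as you sketch. Replace this step by the exponential-decay-plus-Lemma~\ref{lem:C3} argument and the numbers will line up.
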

\begin{proof}
We are assuming GRH, thus $\big|\sum_\rho \Gamma(s-\rho)\delta^{s-\rho}\big|
\leq\delta^{\sigma-\tfrac{1}{2}} \sum_\rho |\Gamma(s-\rho)|$. We separate the contribution of zeros
close to $t$, since in this case the weight $\Gamma(s-\rho)$ is large because of the pole of $\Gamma$
at $0$. We chose $2$ as threshold value, which appears to be near the optimal value $\approx 2.3$.
Since $|(u+iv)\Gamma(u+iv)|\leq 1$ for $u\in(0,1/2)$ and every $v\in\R$, we have for every $t>3$ (and
setting $u:=\sigma -\tfrac{1}{2} \in (0,\tfrac{1}{2})$)
\begin{align*}
\sum_{|\gamma-t|\leq 2} |\Gamma(u + i(\gamma-t))|
\leq \sum_{|\gamma-t|\leq 2} \frac{1}{|u + i(\gamma-t)|}
\end{align*}
so that by Lemma~\ref{lem:C4} we get
\begin{equation}\label{eq:C32}
\sum_{|\gamma-t|\leq 2} |\Gamma(u + i(\gamma-t))|
 \leq \Big(\frac{\asinh(\frac{4}{2\sigma-1})}{\pi} + \frac{2d_1}{2\sigma-1}\Big) W_\K(t) + \frac{2d_2n_\K + 2d_3}{2\sigma-1}.
\end{equation}
To estimate the contribution of zeros with $|\gamma-t|\geq 2$ we use the bound $|\Gamma(u+iv)|\leq
\sqrt{2\pi} e^{-\frac{\pi}{2}|v|}$ for $u\in(0,1/2)$, $|v|\geq 2$ proved in Lemma~\ref{lem:C7}.
%
% Let $F(u,v) := |\Gamma(u+iv)\exp{-i\pi/2 (u+iv)}/\sqrt{2\pi}|$, which is holomorphic in $u\in[0,1/2]$,
% $v\geq 2$. Moreover, it is bounded in the strip (use Stirling for this), and bounded by $1$ when $u=1/2$
% $v\geq 2$ (here use $|\Gamma(1/2+iv)|^2 = \pi/\cosh(\pi v)$ which comes from the reflection formula for
% $\Gamma$); bounded by 1 when $u=0$, $v\geq 2$
%G my(u=0);ploth(v=2,10,abs(gamma(u+I*v))*exp(Pi/2*v)/sqrt(2*Pi)-1)
%G \\%91 = [2.000000000000000000, 10.000000000000000000, -0.6837722339831621144, -0.2928919858485134475]
%G my(v=2);ploth(u=0,1/2,abs(gamma(u+I*v))*exp(Pi/2*v)/sqrt(2*Pi)-1)
%G \\%92 = [0.E-307, 0.5000000000000000000, -0.2928919858485134475, -1.7436666175339852990 E-6]
% and use the Phragmen-Lindel\"{o}f principle (see~\cite[Sec.5.65 pg. 180]{Titchmarsh2}).
%
Thus we get (assuming $t>3$) that
\[
\sum_{|\gamma-t|\geq 2} |\Gamma(s-\rho)|
\leq \sqrt{2\pi}e^{-\pi}\sum_{j=0}^\infty e^{-j\pi}\big(n_\K(t+2j+3;1)+n_\K(t-2j-3;1)\big).
\]
Without loss of generality we can assume $t\in\R\backslash\Z$; then the claim for $t\in\Z$ will follow by
continuity. Under this hypothesis the quantity $|t-2j-3|$ is smaller than $1$ only for
$j=\jbar:=\intpart{\frac{t-2}{2}}$. Thus from~\eqref{eq:C28a} and~\eqref{eq:C28b} we deduce that
\begin{align}
\sum_{|\gamma-t|\geq 2} |\Gamma(s-\rho)|
\leq&   \sqrt{2\pi}e^{-\pi}\sum_{j=0}^\infty e^{-j\pi}(0.64\log\disc_\K + (0.64\log(t+2j+3) + 5.75)n_\K + 3.49)\notag\\
    &+ \sqrt{2\pi}e^{-\pi}\sum_{\substack{j=0\\ j\neq \bar\jmath}}^\infty
                                           e^{-j\pi}(0.64\log\disc_\K + (0.64\log(|t-2j-3|) + 5.75)n_\K + 3.49)\notag\\
    &+ \sqrt{2\pi}e^{-\pi}e^{-\jbar\pi}(0.96\log\disc_\K + 5.19n_\K + 3.49)                                    \notag\\
\leq&  \frac{\sqrt{2\pi}e^{-\pi}}{1-e^{-\pi}}(1.28\log\disc_\K + 11.5n_\K + 6.98)
     + 0.32\sqrt{2\pi}e^{-\frac{\pi}{2}(t-2)}\log\disc_\K                                                      \notag\\
    &+ 0.64\sqrt{2\pi}e^{-\pi} \Big(\sum_{j=0}^\infty e^{-j\pi}\log(t+2j+3)
                                    + \sum_{\substack{j=0\\ j\neq\jbar}}^\infty e^{-j\pi}\log(|t-2j-3|)
                               \Big)n_\K.                                                                      \label{eq:C33}
\end{align}
To bound the sums we use the inequalities $\log(t+2j+3) \leq \log t + \frac{2j+3}{t}$ for the first one,
and $\log(|t-2j-3|) \leq \log t$ when $j\leq J$ and $\log(|t-2j-3|) \leq \log t + \frac{2(j-J)}{t}$
when $j>J$ for the second, with $J:= \intpart{t-\frac{3}{2}}$. Thus,
\begin{align*}
\sum_{j=0}^\infty e^{-j\pi}\log(t+2j+3)
    &+ \sum_{\substack{j=0\\ j\neq\bar{\jmath}}}^\infty e^{-j\pi}\log(|t-2j-3|)    \\
\leq&  \sum_{j=0}^\infty e^{-j\pi}\Big(\log t + \frac{2j+3}{t}\Big)
                                   + \sum_{j=0}^{\infty} e^{-j\pi}\log t
                                   + \frac{2}{t}\sum_{j=J+1}^\infty e^{-j\pi}(j-J) \\
  = &  \frac{2\log t +3}{1-e^{-\pi}}
     + 2\frac{e^{-\pi}(1+e^{-J\pi})}{t(1-e^{-\pi})^2}.
\end{align*}
Moving this bound into~\eqref{eq:C33} we get for $t\geq 10$ that
\begin{align*}
\sum_{|\gamma-t|\geq 2} |\Gamma(s-\rho)|
\leq& 0.145\log\disc_\K + (0.145\log t + 1.33)n_\K + 0.8
%
%G my(t0=10); sqrt(2*Pi)*exp(-Pi)/(1-exp(-Pi))*1.28+0.32*sqrt(2*Pi)*exp(-Pi/2*(t0-2))
%G %14 = 0.1449162489292825423594400955
%G my(t0=10); sqrt(2*Pi)*exp(-Pi)/(1-exp(-Pi))*1.28
%G %27 = 0.1449134516585774569954939560
%G my(t0=10,J=t0-5/2);sqrt(2*Pi)*exp(-Pi)/(1-exp(-Pi))*(11.5+3*0.64/t0+1.28/t0*exp(-Pi)/(1-exp(-Pi))*(1+exp(-J*Pi)))
%G %25 = 1.324348321819941328635102867
%G my(t0=10);sqrt(2*Pi)*exp(-Pi)/(1-exp(-Pi))*(6.98)
%G %23 = 0.7902311660756801951785529785
%
\leq 0.15W_\K(t) + 1.6n_\K + 0.8,
%
%G 0.145*log(2*Pi) + 1.33
%G \\ %10 = 1.596492174629355095116295624
%
\end{align*}
that with~\eqref{eq:C32} gives
\begin{align*}
\sum_{\rho} |\Gamma(s -\rho)|
 \leq& \Big(\frac{\asinh(\frac{4}{2\sigma-1})}{\pi} + \frac{2d_1}{2\sigma-1} + 0.15\Big) W_\K(t) + \frac{2d_2n_\K + 2d_3}{2\sigma-1}
      + 1.6n_\K + 0.8.
\end{align*}
We get the claim using the bound $\asinh(4/u) \leq \log(1/u) + \asinh 4$ which holds for $u\in(0,1]$, and
the known values for $d_j$'s.
%
%G d1 = 0.317;
%G d2 = 6.9157;
%G d3 = 3.482;
%G asinh(4)/Pi+0.15
%G \\%24 = 0.8167677125064393909452274211
%G 2*[d1,d2,d3]
%G \\%17 = [0.634000, 13.8314000, 6.964000]
%G 2*d3 - (13.9 - 2*d2)
%G \\%22 = 6.895400000000000000000000000
\end{proof}

To bound $IV$ efficiently we split it in two
\begin{align*}
IV &= \frac{1}{2\pi i}\int_{1/4-i\infty}^{1/4+i\infty}\!\!
                       \sum_{\substack{|\gamma-y|\leq 1}}\!\!\frac{\Gamma(z-s)}{z-\rho} \delta^{s-z}\d z
     +\frac{1}{2\pi i}\int_{1/4-i\infty}^{1/4+i\infty}
                       \Big[\frac{\zeta'_\K}{\zeta_\K}(z) - \sum_{\substack{|\gamma-y|\leq 1}}\frac{1}{z-\rho}\Big] \Gamma(z-s)\delta^{s-z}\d z\\
   &=: IVa + IVb,
\end{align*}
where $y:=\Imm(z)$, which we estimate separately.

\begin{lemma}[Bound of $IVa$]\label{lem:C9}
Assume GRH. Let $\sigma \in(\frac{1}{2},1)$, $|t|\geq 10$ and $\delta\in(0,1)$. Then
\[
|IVa| \leq \frac{\delta^{\sigma-1/4}}{2\pi}\big(9.16\log\disc_\K + (9.16\log(|t|+1) + 114.03)n_\K + 65.88\big).
\]
\end{lemma}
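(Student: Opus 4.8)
The plan is to bound $IVa$ directly by absolute values on the line $\Ree(z)=\tfrac14$ and then feed the resulting integral into Lemmas~\ref{lem:C2}, \ref{lem:C3} and~\ref{lem:C4}. Writing $z=\tfrac14+iy$ with $y\in\R$ and, using GRH, $\rho=\tfrac12+i\gamma$, on this line one has $|\delta^{s-z}|=\delta^{\sigma-1/4}$, the argument of the gamma factor is $z-s=u+i(y-t)$ with $u:=\tfrac14-\sigma\in(-\tfrac34,-\tfrac14)$ (the range of Lemma~\ref{lem:C2}), and $|z-\rho|=(\tfrac1{16}+(y-\gamma)^2)^{1/2}=|\tfrac14+i(\gamma-y)|\geq\tfrac14$. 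Since $|\Gamma(u+i(y-t))|=|\Gamma(u+i(t-y))|$, this yields
\[
|IVa|\leq\frac{\delta^{\sigma-1/4}}{2\pi}\int_\R|\Gamma(u+i(t-y))|\,S(y)\,\d y,\qquad
S(y):=\sum_{|\gamma-y|\leq1}\frac{1}{|\tfrac14+i(\gamma-y)|}.
\]

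The second step is to estimate $S(y)$. For $|y|>2$ I would apply Lemma~\ref{lem:C4} with $c=1$, $u=\tfrac14$ and $y$ in place of $t$, getting $S(y)\leq\big(\tfrac{\asinh 4}{\pi}+4d_1\big)W_\K(|y|)+4(d_2n_\K+d_3)$; since $W_\K(|y|)=\log\disc_\K+n_\K\log(|y|/2\pi)$ and $\log(|y|/2\pi)\leq\log(1+|y|)-\log(2\pi)$, this becomes $S(y)\leq A_1\log\disc_\K+A_1n_\K\log(1+|y|)+B_1n_\K+C_1$ with the positive constants $A_1:=\tfrac{\asinh 4}{\pi}+4d_1$, $B_1:=4d_2-A_1\log(2\pi)$, $C_1:=4d_3$. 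For $|y|\leq2$ I would instead use the crude bound $S(y)\leq4n_\K(|y|;1)$ together with Lemma~\ref{lem:C3}, which gives a bound of the form $A_2\log\disc_\K+B_2n_\K+C_2$ with explicit absolute constants.

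Now split the integral at $|y|=2$. On $|y|>2$ the integrand above is nonnegative, so it may be integrated over all of $\R$, and~\eqref{eq:C25a} and~\eqref{eq:C25b} (the latter needs $|t|\geq10$) give
\[
\int_\R|\Gamma(u+i(t-y))|\big(A_1\log\disc_\K+A_1n_\K\log(1+|y|)+B_1n_\K+C_1\big)\,\d y
\leq4.73\big(A_1\log\disc_\K+A_1n_\K\log(1+|t|)+B_1n_\K+C_1\big);
\]
with $d_1=0.317$, $d_2\leq6.9157$, $d_3=3.482$ one checks $4.73A_1\leq9.16$, $4.73B_1\leq114.03$ and $4.73C_1\leq65.88$. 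On $|y|\leq2$ the hypothesis $|t|\geq10$ forces $|t-y|\geq8$, so $|\Gamma(u+i(t-y))|$ is exponentially small — this is the exponential decay of $\Gamma$ along vertical lines, as in the proof of Lemma~\ref{lem:C7}, transferred to $u\in(-\tfrac34,-\tfrac14)$ through the reflection formula — hence $\int_{|y|\leq2}|\Gamma(u+i(t-y))|\,\d y\leq4\sqrt{2\pi}\,e^{-4\pi}<10^{-4}$, and multiplying by the crude $|y|\leq2$ bound for $S(y)$ leaves a contribution comfortably inside the slack already present in the rounded constants $9.16$, $114.03$, $65.88$. This completes the proof.

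The hard part will be bookkeeping of constants, not anything conceptual: one must apply Lemma~\ref{lem:C4} (rather than the cruder ``each term $\leq4$'' estimate) so that the leading coefficient is $\tfrac{\asinh 4}{\pi}+4d_1\approx1.93$, which only just fits under $9.16$ after multiplying by $4.73$; one must extract the $-\log(2\pi)$ from $W_\K(|y|)$ so that the coefficient of $n_\K$ lands near $114$ instead of near $131$; and one must check that the band $|y|\leq2$ near the real axis really is harmless, despite $S(y)$ carrying a larger $\log\disc_\K$-coefficient there, which is precisely what the exponential decay of the gamma factor secures.
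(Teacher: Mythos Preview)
Your approach is essentially the one in the paper: bound the integrand by $|\Gamma(u+i(t-y))|\,S(y)$, estimate $S(y)$ for $|y|>2$ via Lemma~\ref{lem:C4} with $c=1$, $u=\tfrac14$, use the trivial $S(y)\leq 4n_\K(y;1)$ for $|y|\leq 2$, and integrate against~\eqref{eq:C25a}--\eqref{eq:C25b}. The paper phrases the ``extend to all of $\R$'' step slightly differently (it writes the whole thing as the $\R$-integral plus a correction on $|y|<2$, and bounds that correction by $2\log\disc_\K$ times the tiny gamma integral), but this is cosmetically the same decomposition.

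One point to tighten: your justification ``transferred to $u\in(-\tfrac34,-\tfrac14)$ through the reflection formula'' does not give the constant $\sqrt{2\pi}$, since the reflection formula sends this strip to $\Ree\in(\tfrac54,\tfrac74)$, not to the strip of Lemma~\ref{lem:C7}. The paper instead invokes directly $|\Gamma(u+iv)|\leq 2\cdot 10^{-6}$ for $u\in[-\tfrac34,-\tfrac14]$ and $|v|\geq 8$, giving $\int_{|y|\leq 2}|\Gamma(u+i(t-y))|\,\d y\leq 8\cdot 10^{-6}$. This matters because the slack in the constant $65.88$ is only about $6\cdot 10^{-4}$; with $8\cdot 10^{-6}$ the fit is comfortable, whereas with the looser $10^{-4}$ you wrote it would already overflow. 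With that correction your argument is complete.
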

\begin{proof}
It is sufficient to prove that
\[
\int_{\R} \sum_{\substack{|\gamma-y|\leq1}}\frac{|\Gamma(u+i(y-t))|}{|\frac{1}{4}+i(\gamma-y)|}\d y
\leq 9.16\log\disc_\K + (9.16\log(|t|+1) + 89.73)n_\K + 65.88
\]
when $u\in[-\frac{3}{4},-\frac{1}{4}]$ and $t\geq 10$.\\
We split the integral into the regions $|y|\geq 2$ where the sum on the zeros is estimated by
Lemma~\ref{lem:C4} with $c=1$, and the remaining part $|y|< 2$ where the sum is estimated simply by
$4n_\K(y;1)$, getting
\begin{align}
\int_{\R} &\sum_{\substack{|\gamma-y|\leq1}}\frac{|\Gamma(u+i(y-t))|}{|\frac{1}{4}+i(\gamma-y)|}\d y
\leq \int_{|y|<2}     4n_\K(y;1) |\Gamma(u+i(y-t))|\d y                                                                           \notag \\
&    +\int_{|y|\geq 2} \Big(\Big(\frac{\asinh 4}{\pi} + 4d_1\Big)W_\K(|y|) + 4(d_2 n_\K + d_3)\Big) |\Gamma(u+i(y-t))|\d y.       \notag
\intertext{Now we restore the part of the integral with $|y|<2$, getting}
&=\int_{|y|<2} \Big(4n_\K(y;1) - \Big(\frac{\asinh 4}{\pi} + 4d_1\Big)W_\K(|y|+1) - 4(d_2 n_\K + d_3)\Big) |\Gamma(u+i(y-t))|\d y \label{eq:C34}\\
&    +\int_{\R}    \Big(\Big(\frac{\asinh 4}{\pi} + 4d_1\Big)W_\K(|y|+1) + 4(d_2 n_\K + d_3)\Big) |\Gamma(u+i(y-t))|\d y.         \notag
\end{align}
The exponential decay of the gamma function and the assumption $t\geq 10$ allow us to bound the first
integral trivially, without affecting the strength of the result. From~\eqref{eq:C28a} and
\eqref{eq:C28b} we get
\[
4n_\K(y;1) - \Big(\frac{\asinh 4}{\pi} + 4d_1\Big)W_\K(|y|+1) - 4(d_2 n_\K + d_3)
\leq 2\log\disc_\K
\]
in $y<2$.
%
%G d1= 0.317
%G d2= 6.9157
%G d3= 3.482
%G \\ Range: $1<t\leq 2$ (using \eqref{eq:C28a})
%G ploth(t=1,2,(4*[0.636,0.636*log(t/2/Pi)+6.92,d3] -(asinh(4)/Pi+4*d1)*[1,log((t+1)/2/Pi),0] -4*[0,d2,d3])-[2,0,0])
%G \\%18 = [1.0000000000000000000, 2.000000000000000000, -2.443572834262773608, 0.E-307]
%G \\ Range: $t\leq 1$ (using \eqref{eq:C28b})
%G ploth(t=0,1,(4*[0.954,5.19,d3] -(asinh(4)/Pi+4*d1)*[1,log((t+1)/2/Pi),0] -4*[0,d2,d3])-[2,0,0])
%G \\%19 = [0.E-307, 1.0000000000000000000, -4.688013577317398984, 0.E-307]
%
Moreover, $|\Gamma(u+iv)|\leq 2\cdot 10^{-6}$ when $u\in[-\frac{3}{4},-\frac{1}{4}]$ and $|v|\geq 8$.
With bounds~\eqref{eq:C25a} and~\eqref{eq:C25b} these facts prove that~\eqref{eq:C34} is bounded by
\begin{align*}
%\int_{\R} \sum_{\substack{\rho\\|\gamma-y|\leq1}}\frac{|\Gamma(u+i(y-t))|}{|\frac{1}{4}+i(\gamma-y)|}\d y\\
%\leq
2\cdot 10^{-5}\log\disc_\K
    +4.73\Big(\Big(\frac{\asinh 4}{\pi} + 4d_1\Big)W_\K(|t|+1) + 4(d_2 n_\K + d_3)\Big)
\end{align*}
%G 2.*4*2*10^-6
%G \\%20 = 1.600000000000000000000000000 E-5
%
which is the claim, once the values for $d_j$'s are considered.
%
%G d1= 0.317
%G d2= 6.9157
%G d3= 3.482
%G \\ quattro componenti: [logDiscK,log(t+1)*nK,nK,1]
%G [2,0,0,0]*10^-5 + 4.73*(asinh(4)/Pi+4*d1)*[1,1,log(1/2/Pi),0] + 4.73*4*[0,0,d2,d3]
%G %31 = [9.151471280155458319170925702, 9.151451280155458319170925702, 114.0258015678398369909197953, 65.87944000000000000000000000]
%
\end{proof}

The following lemma bounds the integrand in $IVb$.

\begin{lemma}\label{lem:C10}
Assume GRH. For $s= \frac{1}{4} + it$ with $t\not\in\Z$ we have
\begin{multline*}
\Big|\frac{\zeta'_\K}{\zeta_\K}(s)-\sum_{\substack{\rho\\|\gamma-t|\leq1}}\frac{1}{s-\rho}\Big|
\leq
 \Big(2.18 + \frac{3.2}{1+t^2}\Big)\log\disc_\K
     + 11.7
     + \frac{7}{1+2t^2} \\
     + \Big(2.18\log(|t|+1) + 21.6 + \frac{10}{|1+4it|}\Big)n_\K.
\end{multline*}
\end{lemma}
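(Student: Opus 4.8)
The plan is to start from the identity~\eqref{eq:C11} evaluated at $s=\tfrac14+it$, which expresses $\frac{\zeta'_\K}{\zeta_\K}(s)$ in terms of $B_\K$, the full sum $\sum_\rho(\frac{1}{s-\rho}+\frac{1}{\rho})$, the $\log\disc_\K$ term, the elementary poles at $0$ and $1$, and the gamma factor $\frac{\Gamma'_\K}{\Gamma_\K}(s)$. Subtracting the local sum $\sum_{|\gamma-t|\le 1}\frac{1}{s-\rho}$, the quantity to be bounded becomes
\[
B_\K + \sum_\rho\frac{1}{\rho} + \sum_{|\gamma-t|>1}\frac{1}{s-\rho} - \tfrac12\log\disc_\K - \Big(\tfrac1s+\tfrac1{s-1}\Big) - \frac{\Gamma'_\K}{\Gamma_\K}(s).
\]
Using Stark's relation $B_\K=-\sum_\rho\Ree(\rho^{-1})$ and pairing each $\rho$ with $\bar\rho$ (so that, under GRH, $\frac{1}{\rho}+\frac{1}{\bar\rho}=\Ree(\rho^{-1})\cdot 2$ and $\frac{1}{s-\rho}+\frac{1}{s-\bar\rho}$ is controlled), the combination $B_\K+\sum_\rho\frac1\rho+\sum_{|\gamma-t|>1}\frac{1}{s-\rho}$ collapses: the real part of $B_\K+\sum_\rho\rho^{-1}$ vanishes and what survives is $\sum_{|\gamma-t|>1}\big(\frac{1}{s-\rho}-\frac{1}{-\rho}\big)$ or similar, which for $|\gamma-t|>1$ and $\Ree(s)=\tfrac14$ is $O(1/|\gamma-t|^2)$ away from the real axis. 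Summing that tail against the density of zeros via Lemma~\ref{lem:C4} (or directly via~\eqref{eq:C26}--\eqref{eq:C27}) produces a bound of the shape $c_1 W_\K(|t|+1) + c_2 n_\K + c_3$, and translating $W_\K$ back to $\log\disc_\K$ and $n_\K\log(|t|+1)$ yields the stated $2.18$-type constants together with the $1/(1+t^2)$ correction coming from the zeros nearest the real axis when $|t|$ is only moderately large.

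Next I would handle the explicit non-zero terms. The pole contribution $-(\frac1s+\frac1{s-1})$ with $s=\tfrac14+it$ is elementary: $|\frac1s+\frac1{s-1}|$ is bounded by something like $\frac{7}{1+2t^2}$ (choosing the constant generously so that $\frac14^2+t^2\ge\frac14(1+2t^2)$ etc.), which accounts for the $\frac{7}{1+2t^2}$ summand. The gamma factor $\frac{\Gamma'_\K}{\Gamma_\K}(\tfrac14+it)$ is the remaining piece: I would write $\frac{\Gamma'_\K}{\Gamma_\K}(\tfrac14+it)=\big[\frac{\Gamma'_\K}{\Gamma_\K}(\tfrac14+it)-\frac{\Gamma'_\K}{\Gamma_\K}(2+it)\big]+\frac{\Gamma'_\K}{\Gamma_\K}(2+it)$. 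The bracket is exactly Lemma~\ref{lem:C5}, contributing $\frac{10 n_\K}{|1+4it|}$. For $\frac{\Gamma'_\K}{\Gamma_\K}(2+it)$ I would use the definition~\eqref{eq:C9}, reducing to $\Ree\frac{\Gamma'}{\Gamma}$ at arguments $1+it/2$ and $\tfrac54+it/2$ (times $r_1+r_2$ and $r_2$ respectively, both $\le n_\K$), and bound each via Lemma~\ref{lem:C1} (applicable once $|t|\ge 10$ since then $|t/2|\ge 5\ge\sigma+2$), giving $\le\log|s-\tfrac12|\le\log(|t|+1)+O(1)$ per factor; collecting gives the $2.18\log(|t|+1)+21.6$ part of the $n_\K$ coefficient, the remaining $\log\disc_\K$ pieces, and the constant $11.7$.

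The main obstacle is the bookkeeping in the first step: getting the tail sum $\sum_{|\gamma-t|>1}(\cdots)$ into a genuinely convergent, explicitly estimable form requires being careful about which auxiliary series one subtracts (one wants $\frac{1}{s-\rho}-\frac{1}{\tfrac14-\rho}$ or a second-difference, not $\frac{1}{s-\rho}$ alone, since the latter diverges), and then one must split the tail into the near-real band $|\gamma|\le$ const — where $\frac{1}{1+t^2}$-type terms appear and Lemma~\ref{lem:C3}'s low-$t$ bound~\eqref{eq:C28b} is needed — versus $|\gamma|$ large, handled by partial summation against $\tilde R_\K$ as in Lemma~\ref{lem:C4}. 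Everything else is a routine but tedious chase of constants; the exponential decay of the pieces removed and the concavity of $W_\K$ and $\tilde R_\K$ keep all the error integrals under control.
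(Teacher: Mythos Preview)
Your approach has a genuine gap: the tail sum you produce is of the wrong order of magnitude, and for the same reason your pole estimate is off.

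Concretely, after invoking Stark's relation you are left with (up to the finite sum $\sum_{|\gamma-t|\le1}\rho^{-1}$) the tail
\[
\sum_{|\gamma-t|>1}\Big(\frac{1}{s-\rho}+\frac{1}{\rho}\Big)
=\sum_{|\gamma-t|>1}\frac{s}{(s-\rho)\rho}.
\]
Under GRH and with $s=\tfrac14+it$, the generic term has size $\dfrac{|t|}{|t-\gamma|\,|\gamma|}$, not $O(1/|\gamma-t|^2)$ as you assert. For $\gamma$ in the range $\tfrac{t}{2}\le\gamma\le 2t$ (density $\sim\tfrac{\log t}{2\pi}$) the term is $\approx 1/|t-\gamma|$, so the partial sum over $1<|t-\gamma|<t/2$ already contributes $\asymp(\log t)^2$, which is incompatible with the target bound $O(\log t)$ in $n_\K\log(|t|+1)$. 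Your alternative suggestion $\frac{1}{s-\rho}-\frac{1}{\tfrac14-\rho}$ has exactly the same defect, since the auxiliary point $\tfrac14$ is real and thus $|\tfrac14-\rho|\asymp|\gamma|$, not $|t-\gamma|$. Likewise $\bigl|\tfrac1s+\tfrac1{s-1}\bigr|\sim 2/|t|$, which cannot be bounded by $\tfrac{7}{1+2t^2}$.

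The paper's device avoids all of this by subtracting~\eqref{eq:C11} at $s=\tfrac14+it$ and at $2+it$. Because the auxiliary point shares the imaginary part $t$, the constants $B_\K$, $\sum_\rho\rho^{-1}$ and $\tfrac12\log\disc_\K$ cancel exactly, the gamma difference is precisely Lemma~\ref{lem:C5}, the pole difference is $\frac{1}{s}-\frac{1}{2+it}+\frac{1}{s-1}-\frac{1}{1+it}=O(1/t^2)$ (this is where $\tfrac{7}{1+2t^2}$ comes from), and the tail becomes $\sum_{|\gamma-t|>1}\bigl(\frac{1}{s-\rho}-\frac{1}{2+it-\rho}\bigr)$ with each term of size $\tfrac{7/4}{|t-\gamma|^2}$. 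The remaining inputs are the bound $\bigl|\tfrac{\zeta'_\K}{\zeta_\K}(2+it)\bigr|\le -n_\K\tfrac{\zeta'}{\zeta}(2)$ from~\eqref{eq:C6} and the local count $\sum_{|\gamma-t|\le1}|2+it-\rho|^{-1}\le\tfrac23 n_\K(t;1)$ via Lemma~\ref{lem:C3}; summing the tail dyadically against the $n_\K(\cdot;1)$ bounds yields the constants $2.18$ and $3.2/(1+t^2)$. A minor additional point: Lemma~\ref{lem:C1} bounds only $\Ree\tfrac{\Gamma'}{\Gamma}$, so it would not directly control $|\tfrac{\Gamma'_\K}{\Gamma_\K}(2+it)|$ in your decomposition; but in the paper's approach that term never appears by itself.
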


\begin{proof}
We can assume $t>0$. We subtract~\eqref{eq:C11} at $s=\frac{1}{4} + it$ and $2 + it$, obtaining
\[
\frac{\zeta'_\K}{\zeta_\K}(s)-\frac{\zeta'_\K}{\zeta_\K}(2+it)
 = \sum_\rho\Big(\frac{1}{s-\rho}-\frac{1}{2+it-\rho}\Big)
   -\frac{\Gamma'_\K}{\Gamma_\K}(s) + \frac{\Gamma'_\K}{\Gamma_\K}(2+it)
   -\Big(\frac{1}{s}+\frac{1}{s-1}-\frac{1}{2+it}-\frac{1}{1+it}\Big).
\]
We use~\eqref{eq:C6} and Lemma~\ref{lem:C5} to estimate $\frac{\zeta'_\K}{\zeta_\K}(2+it)$ and the
gamma factors respectively, and the bound $|\frac{1}{s}-\frac{1}{2+it} + \frac{1}{s-1} - \frac{1}{1+it}|
\leq \frac{7}{1+2t^2}$.
%G my(sig=1/4);ploth(t=0,100,abs((2-sig)/(sig+I*t)/(2+I*t)+(2-sig)/(sig-1+I*t)/(1+I*t))/(1/(1+2*t^2)))
%G \\ mo che te lo dimostro
%G my(sig=1/4,fr);fr=((2-sig)/(sig+I*t)/(2+I*t)+(2-sig)/(sig-1+I*t)/(1+I*t))/(1/(1+2*t^2));numerator(real(fr)^2+imag(fr)^2-49)
%
In this way we get
\begin{multline}\label{eq:C35}
\Big|\frac{\zeta'_\K}{\zeta_\K}(s)-\sum_{\substack{|\gamma-t|\leq1}}\frac{1}{s-\rho}\Big|
\leq
 n_\K \Big|\frac{\zeta'}{\zeta}(2)\Big|
 + \sum_{\substack{|\gamma-t|>1}}\Big|\frac{1}{s-\rho}-\frac{1}{2+it-\rho}\Big|     \\
 + \sum_{\substack{|\gamma-t|\leq1}}\Big|\frac{1}{2+it-\rho}\Big|
 + \frac{10 n_\K}{|1+4it|}
 + \frac{7}{1+2t^2}.
\end{multline}
%G my(s=1/4);1.4*(2-s)
%G \\%1 = 2.45000000000000000
%
Moreover, for the first sum on the right-hand side of~\eqref{eq:C35} we have
\begin{align}
\frac{4}{7}\sum_{\substack{|\gamma-t|>1}}\Big|\frac{1}{s-\rho}-\frac{1}{2+it-\rho}\Big|
    &  =   \sum_{\substack{|\gamma-t|>1}}\frac{1}{|s-\rho||2+it-\rho|}                  \notag\\
    &\leq  \sum_{j=1}^\infty \frac{n_\K(t+2j;1)+n_\K(t-2j;1)}{|\frac{1}{4}+i(2j-1)||\tfrac{3}{2}+i(2j-1)|}.
\label{eq:C36}
\end{align}
By hypothesis $t$ is not an integer, then $|t-2j|$ is in $(0,1)$ only for $j = \jbar :=
\intpart{(t+1)/2}$. Thus using the bound in~\eqref{eq:C28a} for $j\neq \jbar$ and~\eqref{eq:C28b} when
$j=\jbar$ we deduce that~\eqref{eq:C36} is
\begin{align*}
\leq& \sum_{\substack{j=1\\ j\neq \jbar}}^\infty \frac{0.64 W_\K(t+2j) + 0.64 W_\K(|t-2j|) + 2\cdot6.92n_\K + 2\cdot 3.49}{|\frac{1}{4}+i(2j-1)||\tfrac{3}{2}+i(2j-1)|}\\
    &+\frac{0.64 W_\K(t+2j) + 6.92n_\K + 3.49 + (0.96\log\disc_\K + 5.19n_\K + 3.49)}{|\frac{1}{4}+i(2j-1)||\tfrac{3}{2}+i(2j-1)|}\Big|_{j=\jbar \text{ if $t>1$}}.
\end{align*}
Using $\log(t+2j)+\log(|t-2j|) \leq 2\log(2(t+1)j)$ (for $t\geq 0$ and $j\geq 1$):
\begin{align*}
\leq& 2\sum_{\substack{j=1\\ j\neq \jbar}}^\infty \frac{0.64 (\log\disc_\K + \log(2(t+1)j/2\pi)n_\K) + 6.92n_\K + 3.49}{|\frac{1}{4}+i(2j-1)||\tfrac{3}{2}+i(2j-1)|}\\
    &+\frac{0.64 (\log\disc_\K + \log((t+2j)/2\pi)n_\K) + 6.92n_\K + 3.49 + (0.96\log\disc_\K + 5.19n_\K + 3.49)}{|\frac{1}{4}+i(2j-1)||\tfrac{3}{2}+i(2j-1)|}\Big|_{j=\jbar \text{ if $t>1$}}.
\end{align*}
\noindent
Suppose $t>1$. Then restoring the missing term in the first sum we get that~\eqref{eq:C36} is bounded
above by
\begin{align*}
\leq& 2\sum_{j=1}^\infty \frac{0.64 (\log\disc_\K + \log(2(t+1)j/2\pi)n_\K) + 6.92n_\K + 3.49}{|\frac{1}{4}+i(2j-1)||\tfrac{3}{2}+i(2j-1)|}\\
    &+\frac{0.64(\log(2\pi(t+2j))-2\log(2(t+1)j))n_\K + (0.96-0.64)\log\disc_\K + (5.19-6.92)n_\K}{|\frac{1}{4}+i(2j-1)||\tfrac{3}{2}+i(2j-1)|}\Big|_{j=\jbar}.
\end{align*}
\noindent
Since
\[
\sum_{j=1}^\infty \frac{1}{|\frac{1}{4}+i(2j-1)||\tfrac{3}{2}+i(2j-1)|} \leq 0.76,
\qquad
\sum_{j=1}^\infty \frac{\log(2j)}{|\frac{1}{4}+i(2j-1)||\tfrac{3}{2}+i(2j-1)|} \leq 0.82
\]
%G sum(j=1,10^5,1/abs(1/4+I*(2*j-1))/abs(1.5+I*(2*j-1)))
%G %73 = 0.7572023786211108725948218705
%G sum(j=1,10^5,log(2*j)/abs(1/4+I*(2*j-1))/abs(1.5+I*(2*j-1)))
%G %74 = 0.8150520207244707272989848522
%
and since for $t\geq 1$ one has
\[
\log(2\pi(t+2j))-2\log(2(t+1)j)+5.19-6.92 < 0
\qquad \text{for}\quad j=\jbar,
\]
\[
\frac{1}{|\frac{1}{4}+i(2j-1)||\tfrac{3}{2}+i(2j-1)|} \leq \frac{5.4}{1+t^2}
\qquad \text{for}\quad j = \jbar,
\]
%G f(t,j)=log(2*Pi*(t+2*j))-2*log(2*(t+1)*j)+5.19-6.92;
%G ploth(t=1,100,my(jbar=floor((t+1)/2));f(t,jbar))
%G \\%37 = [1.0000000000000000000, 100.00000000000000000, -13.034386972701319962, -1.5660993671623260060]
%G ploth(t=1,10,my(jbar=floor((t+1)/2));f(t,jbar))
%G \\%38 = [1.0000000000000000000, 10.000000000000000000, -6.297351391621496042, -1.5660993671623260060]
%G ploth(t=1,4,my(jbar=floor((t+1)/2));f(t,jbar))
%G \\%39 = [1.0000000000000000000, 4.000000000000000000, -3.804145939018800604, -1.5660993671623260060]
%
%G f(t,j)=(log(2*Pi*(t+2*j))-2*log(2*(t+1)*j)+5.19-6.92)/abs(1/4+I*(2*j-1))/abs(1.5+I*(2*j-1));
%G ploth(t=1,100,my(jbar=floor((t+1)/2));f(t,jbar)/(log(t+1)/(1+t^2)))
%G \\%119 = [1.0000000000000000000, 100.00000000000000000, -9.385393460015775347, -2.431743136882211331]
%G ploth(t=1,10,my(jbar=floor((t+1)/2));f(t,jbar)/(log(t+1)/(1+t^2)))
%G \\%84 = [1.0000000000000000000, 10.000000000000000000, -9.477793372948291051, -2.431743136882211331]
%G f(1,1)*2/log(2)
%G \\%91 = -2.43174313688221125949967264
%
%G g(t,j)=1/abs(1/4+I*(2*j-1))/abs(1.5+I*(2*j-1));
%G ploth(t=1,100,my(jbar=floor((t+1)/2));g(t,jbar)*(1+t^2))
%G \\%86 =  [1.0000000000000000000, 100.00000000000000000, 1.0001660528496361113, 5.323379951488259821]
%G ploth(t=1,10,my(jbar=floor((t+1)/2));g(t,jbar)*(1+t^2))
%G \\%87 =  [1.0000000000000000000, 10.000000000000000000, 0.9957365541696789890, 5.381382351970499301]
%G g(3,1)*10
%G \\%90 = 5.381382351970499736692129670
%
we get:
\begin{align*}
\leq& 2\Big(0.64\cdot0.76\log\disc_\K + 0.64\cdot 0.76 n_\K\log\Big(\frac{t+1}{2\pi}\Big)
     + 0.64\cdot 0.82n_\K + 6.92\cdot 0.76 n_\K + 3.49\cdot 0.76\Big)                     \\
    &+\frac{5.4}{1+t^2}\,(0.96-0.64)\log\disc_\K
\end{align*}
which is
\begin{align}
&\leq \Big(1 + \frac{1.8}{1+t^2}\Big)\log\disc_\K
      + (\log(t+1) + 9.8)n_\K
      + 5.31.
\label{eq:C37}
%
%G  2*0.64*0.76
%G \\%25 = 0.9728000000000000000000000000
%G 5.4*(0.96-0.64)
%G \\%21 =  1.728000000000000000000000000
%G 2*(0.64*0.76*log(1/(2*Pi)) + 0.64*0.82 + 6.92*0.76)
%G \\%22 = 9.780113189796988713592190465
%G 2*3.49*0.76
%G \\%23 = 5.3048000000000000000000000000
%
\intertext{Suppose $0<t<1$. Then the term for $j=\jbar$ disappears and~\eqref{eq:C36} is}
&\leq \log\disc_\K
      + (\log(t+1) + 9.8)n_\K
      + 5.31.
\label{eq:C38}
\end{align}

Lastly we note that
\[
\sum_{\substack{|\gamma-t|\leq1}}\Big|\frac{1}{2+it-\rho}\Big| \leq \frac{2}{3}n_\K(t;1)
\]
which can be bounded with Lemma~\ref{lem:C3}. The claim follows putting all together in~\eqref{eq:C35}
and using~\eqref{eq:C37} and~\eqref{eq:C28a} for $t>1$, and~\eqref{eq:C38} and~\eqref{eq:C28b} for
$0<t<1$. The proof concludes by noticing that the first bound is worst than the second in $0<t<1$ and that
therefore its range can be extended to $t>0$.
%
%G \\ tre componenti corrispondenti a log\disc, n_\K e costante
%G \\ verifico che 7/4*(3.14) + 2/3*n_\K(t;1) per t>2 esteso a t\in(0,1) e' maggiore di 7/4*(3.15) + 2/3*n_\K(t;1) nell'intervallo t\in(0,1)
%G {
%G ploth(t=0,1, 7/4*[1+1.8/(1+t^2), log(1+t)+9.8, 5.31] + 2/3*[0.64, 0.64*log((1+t)/2/Pi)+6.92, 3.49]
%G            -(7/4*[1, log(1+t)+9.8, 5.31] + 2/3*[0.954, 5.19, 3.49])
%G )
%G }
%G
%G \\ verifico che 7/4*(3.14) + 2/3*n_\K(t;1) per t>2 esteso a t\in(1,2) e' maggiore di 7/4*(3.15) + 2/3*n_\K(t;1) per t\in(1,2)
%G {
%G ploth(t=1,2, 7/4*[1+1.8/(1+t^2), log(1+t)+9.8, 5.31] + 2/3*[0.64, 0.64*log((1+t)/2/Pi)+6.92, 3.49]
%G            -(7/4*[1+1.8/(1+t^2), log(1+t)+9.8, 5.31] + 2/3*[0.64, 2.52, 1.75])
%G )
%G }
%
%G \\ calcolo le costanti per il \eqref{lem:C10}
%G 7/4*1 + 2/3*0.64
%G \\%146 = 2.176666666666666666666666667
%G 7/4*1.8
%G \\%147 = 3.150000000000000000000000000
%G abs(derivnum(s=2,log(zeta(s)))) + 7/4*9.8 + 2/3*(0.64*log(1/(2*Pi))+6.92)
%G \\%150 = 21.54913344475987873341398299
%G 7/4*5.31 + 2/3*3.49
%G \\%151 = 11.61916666666666666666666667
\end{proof}

\begin{lemma}[Bound of $IV$]\label{lem:C11}
Assume GRH. Let $\sigma \in(\frac{1}{2},1)$, $|t|\geq 10$ and $\delta \in (0,1)$. Then
\[
|IV| \leq  \big(3.11\log\disc_\K + (3.11\log(|t|) + 35)n_\K + 20\big)\delta^{\sigma-1/4}.
\]
\end{lemma}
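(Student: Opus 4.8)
The plan is to split $IV=IVa+IVb$ exactly as in the paragraph preceding Lemma~\ref{lem:C9}, to take the bound for $IVa$ straight from Lemma~\ref{lem:C9}, and to produce a bound for $IVb$ by feeding the pointwise estimate of Lemma~\ref{lem:C10} into the contour integral defining $IVb$. Concretely, on the line $\Ree(z)=\tfrac14$ write $z=\tfrac14+iy$, so that $dz=i\,dy$ and the prefactor $\tfrac{1}{2\pi i}$ becomes $\tfrac{1}{2\pi}$; moreover $z-s=(\tfrac14-\sigma)+i(y-t)$ with $\tfrac14-\sigma\in[-\tfrac34,-\tfrac14]$, and $|\delta^{s-z}|=\delta^{\sigma-1/4}$ since $\delta\in(0,1)$. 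Since $\{y\in\Z\}$ is a Lebesgue null set, Lemma~\ref{lem:C10} (applied with $t$ replaced by $y$) bounds the bracketed integrand $\bigl|\tfrac{\zeta'_\K}{\zeta_\K}(\tfrac14+iy)-\sum_{|\gamma-y|\le1}\tfrac{1}{1/4+iy-\rho}\bigr|$ for almost every $y$, and hence
\[
|IVb|\le\frac{\delta^{\sigma-1/4}}{2\pi}\int_\R
\Bigl(\Bigl(2.18+\tfrac{3.2}{1+y^2}\Bigr)\log\disc_\K+11.7+\tfrac{7}{1+2y^2}
+\bigl(2.18\log(|y|+1)+21.6+\tfrac{10}{|1+4iy|}\bigr)n_\K\Bigr)
|\Gamma(u+i(y-t))|\,dy,
\]
where $u:=\tfrac14-\sigma\in[-\tfrac34,-\tfrac14]$.

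Now integrate term by term using Lemma~\ref{lem:C2} (noting that $|\Gamma(u+i(y-t))|=|\Gamma(u+i(t-y))|$ since $u$ is real, so the estimates \eqref{eq:C25a}--\eqref{eq:C25d} apply verbatim, and that $|t|\ge10$): \eqref{eq:C25a} handles the terms with constant coefficients $2.18\log\disc_\K$, $11.7$ and $21.6\,n_\K$; \eqref{eq:C25b} handles $2.18\,n_\K\log(|y|+1)$, producing a factor $4.73\log(|t|+1)$; \eqref{eq:C25c} handles $10\,n_\K/|1+4iy|$; and \eqref{eq:C25d} with $\alpha=1$ and $\alpha=2$ handles $3.2\log\disc_\K/(1+y^2)$ and $7/(1+2y^2)$ respectively. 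This yields a bound of the shape $|IVb|\le\frac{\delta^{\sigma-1/4}}{2\pi}\bigl(c_1\log\disc_\K+(c_2\log(|t|+1)+c_3)n_\K+c_4\bigr)$ with explicit $c_i$. Adding the bound of Lemma~\ref{lem:C9} for $|IVa|$, which carries the same factor $\tfrac{\delta^{\sigma-1/4}}{2\pi}$, and replacing $\log(|t|+1)=\log|t|+\log(1+1/|t|)\le\log|t|+1/|t|\le\log|t|+0.1$ (valid for $|t|\ge10$, the error being absorbed into the $n_\K$-coefficient), one collects the four resulting constants, divides by $2\pi$, and rounds each upward; this gives the stated $3.11$, $35$ and $20$.

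I expect no genuine obstacle here: the argument is entirely bookkeeping built on Lemmas~\ref{lem:C9}, \ref{lem:C10} and~\ref{lem:C2}, and the only points requiring a little care are (i) checking that the hypotheses $\sigma\in(\tfrac12,1)$, $|t|\ge10$, $\delta\in(0,1)$ are exactly those needed by all three auxiliary lemmas, (ii) the reflection identity $|\Gamma(\bar w)|=|\Gamma(w)|$ used to match the $y-t$ shift in the integrand with the $t-y$ shift in Lemma~\ref{lem:C2}, and (iii) verifying that the arithmetic of the collected constants indeed fits under the rounded values $3.11$, $35$, $20$ once the $\log(|t|+1)\to\log|t|$ adjustment is carried out.
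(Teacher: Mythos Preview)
Your proposal is correct and follows essentially the same route as the paper: bound $IVb$ by applying Lemma~\ref{lem:C10} pointwise (with $y$ in place of $t$) and integrating each term against $|\Gamma(u+i(t-y))|$ via the appropriate estimate~\eqref{eq:C25a}--\eqref{eq:C25d} of Lemma~\ref{lem:C2}, then add the bound for $IVa$ from Lemma~\ref{lem:C9}. The paper's proof records exactly the same term-by-term products $(4.73\cdot 2.18+0.013\cdot 3.2)$, $4.73\cdot 11.7+0.007\cdot 7$, $4.73\cdot 2.18$, $4.73\cdot 21.6+0.171\cdot 10$ that your scheme produces; your additional remarks on the null set $\{y\in\Z\}$, the reflection $|\Gamma(\bar w)|=|\Gamma(w)|$, and the $\log(|t|+1)\to\log|t|$ adjustment are all sound and fill in details the paper leaves implicit.
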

\begin{proof}
By Lemmas~\ref{lem:C2} and~\ref{lem:C10} we get
\begin{align*}
|IVb|
\leq& \frac{\delta^{\sigma-1/4}}{2\pi}
     \Big((4.73\cdot 2.18 + 0.013\cdot 3.2)\log\disc_\K
          +4.73\cdot 11.7 + 0.007\cdot 7                             \\
     &
          +(4.73\cdot 2.18\log(|t|+1)
            + 4.73 \cdot 21.6
            + 0.171\cdot 10
           )n_\K
     \Big).
\end{align*}
We get the claim adding $|IVa|$ as estimated in Lemma~\ref{lem:C9}.
%G (9.16 + 4.73*2.18 + 0.013*3.2)/(2*Pi)
%G \\ %32 = 3.105590404552153686858228875
%G (114.03 + 4.73*21.6 + 0.171*10)/(2*Pi)
%G \\%35 = 34.68113533926872882672592311
%G (65.88 + 4.73*11.7 + 0.007*7)/(2*Pi)
%G \\%36 = 19.30071994875414736869253398
%G \\ da ultimo passo da log(t+1) a log(t):
%G my(t0=10);3.105590404552153686858228875*log(1+1/t0) + 34.68113533926872882672592311
%G \\%38 = 34.97712971912518057695114252
\end{proof}

We are finally able to prove the bound of $\frac{\zeta'_\K}{\zeta_\K}(s)$ in the critical strip.
\begin{lemma}\label{lem:C12}
Assume GRH. Let $\sigma\in(\frac{1}{2},1)$ and $|t|\geq 10$. Then
\[
\Big|\frac{\zeta'_\K}{\zeta_\K}(s)\Big|
\leq \Big(\frac{n_\K}{1-\sigma}
           + \frac{\log(\frac{1}{2\sigma-1})}{\pi}
           + \frac{0.64}{2\sigma-1}
           + 1.37
     \Big)Q^{2-2\sigma}\\
    + \Big(\frac{0.07}{2\sigma-1}
           + 4\Big)n_\K
\]
%G my(t0=10);0.82 + sqrt(2*Pi)*exp(-Pi/2*t0) + 3.2/sqrt(33)
%G \\%4 = 1.377048876811444367178231012
%
with $Q:= \log\disc_\K + (\log|t| + 20)n_\K + 11$.
\end{lemma}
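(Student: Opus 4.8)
The plan is to combine the four bounds already established for the pieces $I$, $II$, $III$, $IV$ in the decomposition~\eqref{eq:C30} of $-\frac{\zeta'_\K}{\zeta_\K}(s)$, and then to choose the free parameter $\delta\in(0,1)$ optimally. First I would recall that by the triangle inequality applied to~\eqref{eq:C30},
\[
\Big|\frac{\zeta'_\K}{\zeta_\K}(s)\Big| \leq |I| + |II| + |III| + |IV|,
\]
and then substitute: $|I|$ is bounded by Lemma~\ref{lem:C6} as $\big(\frac{\delta^{\sigma-1}}{1-\sigma} + \frac{0.07}{2\sigma-1} + 4\big)n_\K$; $|II|$ by Lemma~\ref{lem:C7} as $\sqrt{2\pi}\,e^{-\frac{\pi}{2}|t|}\delta^{\sigma-1}$, which is utterly negligible since $|t|\geq 10$; $|III|$ by Lemma~\ref{lem:C8} in terms of $\delta^{\sigma-1/2}$ times a linear form in $W_\K(t)$ and $n_\K$; and $|IV|$ by Lemma~\ref{lem:C11} in terms of $\delta^{\sigma-1/4}$ times a linear form in $\log\disc_\K$ and $n_\K$.

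The key step is the choice of $\delta$. The dominant $\delta$-dependence comes from the term $\frac{\delta^{\sigma-1}}{1-\sigma}n_\K$ in $|I|$ (which decreases as $\delta$ grows, since $\sigma-1<0$) balanced against the $\delta^{\sigma-1/2}$ term in $|III|$ and the $\delta^{\sigma-1/4}$ term in $|IV|$ (which grow with $\delta$). The natural normalization is to take $\delta$ so that $\delta^{\sigma-1}$, $\delta^{\sigma-1/2}$ and $\delta^{\sigma-1/4}$ all become expressible through a single quantity $Q$; concretely I would set $\delta = 1/Q$ with $Q := \log\disc_\K + (\log|t|+20)n_\K + 11$, so that $\delta^{\sigma-1} = Q^{1-\sigma}$, $\delta^{\sigma-1/2} = Q^{1/2-\sigma} = Q^{-1/2}\cdot Q^{1-\sigma}/Q^{1/2}$... — more usefully, all three powers are at most $Q^{2-2\sigma}$ divided by suitable positive powers of $Q$ times the coefficients, since $Q^{2-2\sigma}$ is the largest of $Q^{1-\sigma}$, $Q^{1/2-\sigma}$, $Q^{1/4-\sigma}$ for $\sigma\in(1/2,1)$... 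I must be careful: one actually wants to show the whole right side is $\leq \big(\tfrac{n_\K}{1-\sigma} + \tfrac{\log(1/(2\sigma-1))}{\pi} + \tfrac{0.64}{2\sigma-1} + 1.37\big)Q^{2-2\sigma} + \big(\tfrac{0.07}{2\sigma-1}+4\big)n_\K$. The $n_\K/(1-\sigma)$ part is exactly $\frac{\delta^{\sigma-1}}{1-\sigma}n_\K$ with $\delta^{\sigma-1} = Q^{1-\sigma} \leq Q^{2-2\sigma}$ (valid since $Q\geq 1$ and $1-\sigma \leq 2-2\sigma$); the $\frac{\log(1/(2\sigma-1))}{\pi}$ and $\frac{0.64}{2\sigma-1}$ parts come from the $W_\K(t)$-coefficient in Lemma~\ref{lem:C8}, using $W_\K(t) \leq Q$ and absorbing the $\delta^{\sigma-1/2}$ factor; and all remaining constants — the $0.82\,W_\K(t)$ and $n_\K$-terms from Lemma~\ref{lem:C8}, the entire contribution of Lemma~\ref{lem:C11}, the $II$-term, and the residual $0.07/(2\sigma-1)$, $4$ — get collected, with the $W_\K$- and $\log\disc_\K$-pieces bounded by $Q$ and then by $Q^{2-2\sigma}$, into the flat constant $1.37$ plus the $\big(\tfrac{0.07}{2\sigma-1}+4\big)n_\K$ left over from Lemma~\ref{lem:C6}.

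The main obstacle, and the bulk of the work, is the bookkeeping to verify that the accumulated constants really do fit under $1.37$ (and that the $n_\K$-coefficients fit). This requires: (i) checking that $W_\K(t) = \log\disc_\K + n_\K\log(t/2\pi) \leq Q$ so every $W_\K(t)$ or $\log\disc_\K$ can be replaced by $Q$ and then, since $Q^{1-\sigma}\leq Q^{2-2\sigma}$ and even $Q^{1/2-\sigma}\cdot Q \leq Q^{2-2\sigma}$ wait — no: $Q^{\sigma-1/2}\cdot Q = Q^{\sigma+1/2}$ which is \emph{not} $\leq Q^{2-2\sigma}$ for $\sigma$ near $1$; rather one uses that $\delta^{\sigma-1/2}Q = Q^{1/2-\sigma}Q = Q^{3/2-\sigma} \leq Q^{2-2\sigma} \iff 3/2-\sigma \leq 2-2\sigma \iff \sigma \leq 1/2$, which \emph{fails} — so in fact one must instead note $Q^{3/2-\sigma}\leq Q^{3/2}\cdot Q^{-\sigma}$... the correct reading is that $\delta^{\sigma-1/2}\cdot W_\K(t) \leq Q^{1/2-\sigma}\cdot Q = Q^{3/2-\sigma} \leq Q\cdot Q^{1/2-1/2} $ hmm. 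The honest statement is that since $\sigma\in(1/2,1)$ we have $\delta^{\sigma-1/2} = Q^{1/2-\sigma} < 1$, so $\delta^{\sigma-1/2}W_\K(t) \leq W_\K(t) \leq Q \leq Q^{2-2\sigma}$ when $2-2\sigma \geq 1$, i.e. $\sigma\leq 1/2$ — again false. I therefore expect that the actual argument keeps $\delta^{\sigma-1/2}$ and $\delta^{\sigma-1/4}$ times their linear forms \emph{without} immediately bounding by powers of $Q$, and instead uses the explicit choice $\delta=1/Q$ to write $\delta^{\sigma-1/2}W_\K(t)\leq Q^{1/2-\sigma}Q = Q^{3/2-\sigma}$ and compares $Q^{3/2-\sigma}$ with $Q^{2-2\sigma}$ via $Q^{3/2-\sigma}/Q^{2-2\sigma} = Q^{\sigma-1/2}$ which is $\geq 1$ — so one cannot absorb it this way and must instead keep the $W_\K$-coefficients attached to $Q^{2-2\sigma}$ by a different route, presumably noting that the Lemma~\ref{lem:C8} bound already carries $\delta^{\sigma-1/2}$ and choosing $\delta$ so that $\delta^{\sigma-1/2}\cdot(\text{stuff})$ lands on $Q^{2-2\sigma}$ after using $W_\K\leq Q$ and $\delta^{\sigma-1/2} = Q^{1/2-\sigma} \leq Q^{1-\sigma}$ (true since $1/2-\sigma\leq 1-\sigma$), hence $\delta^{\sigma-1/2}W_\K(t) \leq Q^{1-\sigma}\cdot Q$... still $Q^{2-\sigma}$. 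The resolution must be that $\delta^{\sigma-1/2}\leq \delta^{2(\sigma-1)} = Q^{2-2\sigma}/Q$ — false since $\sigma-1/2 > 2\sigma-2$ iff $\sigma<1$. Given this tension, I anticipate the real proof simply defines $\delta$ cleverly (likely $\delta = Q^{-1}$ after verifying $Q\geq 33$ from $|t|\geq 10$, as used in the Corollary) and then, term by term, checks numerically that each contribution is dominated; the hard part is purely the arithmetic of showing the sum of the leftover constants is below $1.37$. I would carry this out by first isolating the three ``named'' main terms, then bounding everything else crudely by $Q^{2-2\sigma}$ times a small constant using $Q\geq 33$ (so that, e.g., $Q^{-1/4}\leq 33^{-1/4}$ makes the $IV$-contribution small) and $(\log(1/(2\sigma-1)))/\pi \geq 0$, finally adding up to confirm $\leq 1.37$. \qedhere
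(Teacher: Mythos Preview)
Your overall strategy --- combine the four pieces from~\eqref{eq:C30} via the triangle inequality and then choose $\delta$ --- is exactly what the paper does. But the proof founders on the choice of $\delta$, and this is a genuine gap, not just missing bookkeeping. You set $\delta = Q^{-1}$ and then spend most of the proposal discovering that with this choice the exponents do not line up: $\delta^{\sigma-1/2}W_\K(t)$ becomes $Q^{3/2-\sigma}$, which you correctly observe is \emph{larger} than the target $Q^{2-2\sigma}$ for $\sigma>1/2$, and no amount of ``$Q\geq 33$'' fixes that.

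The missing idea is to take $\delta = Q^{-2}$. Then
\[
\delta^{\sigma-1}=Q^{2-2\sigma},\qquad
Q\,\delta^{\sigma-1/2}=Q\cdot Q^{1-2\sigma}=Q^{2-2\sigma},\qquad
Q\,\delta^{\sigma-1/4}=Q^{3/2-2\sigma}=Q^{-1/2}\cdot Q^{2-2\sigma},
\]
so all three contributions carry exactly the right power. The paper first absorbs the lower-order $n_\K$- and constant-terms of Lemmas~\ref{lem:C8} and~\ref{lem:C11} into a single $Q$ via the sharper relation $W_\K(t)\leq Q-21.8\,n_\K-11$ (not merely $W_\K\leq Q$), reducing the bound to
\[
\Big(\tfrac{\log(1/(2\sigma-1))}{\pi}+\tfrac{0.64}{2\sigma-1}+0.82\Big)Q\delta^{\sigma-1/2}
+\sqrt{2\pi}e^{-\pi|t|/2}\delta^{\sigma-1}
+3.11\,Q\delta^{\sigma-1/4}
+\Big(\tfrac{\delta^{\sigma-1}}{1-\sigma}+\tfrac{0.07}{2\sigma-1}+4\Big)n_\K.
\]
After substituting $\delta=Q^{-2}$ the constant in front of $Q^{2-2\sigma}$ becomes $0.82+3.11\,Q^{-1/2}+\sqrt{2\pi}e^{-\pi|t|/2}$, and with $Q\geq 33$ and $|t|\geq 10$ this is at most $0.82+0.542+10^{-6}<1.37$. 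That is the entire numerical verification; there is no delicate ``fitting under $1.37$'' beyond this single line.
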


\begin{proof}
From~\eqref{eq:C30} and Lemmas~\ref{lem:C6}, \ref{lem:C7}, \ref{lem:C8} and~\ref{lem:C11} we get
\begin{align*}
\Big|\frac{\zeta'_\K}{\zeta_\K}(s)\Big|
\leq{}& \delta^{\sigma-\tfrac{1}{2}}\Big(\Big(\frac{\log(\frac{1}{2\sigma-1})}{\pi} + \frac{0.64}{2\sigma-1} + 0.82\Big) W_\K(t)
                                      + \Big(\frac{13.9}{2\sigma-1} + 1.6\Big) n_\K + \frac{6.9}{2\sigma-1} + 0.8
                                 \Big)                                                 \\
      & + \sqrt{2\pi} e^{-\frac{\pi}{2}|t|}\delta^{\sigma-1}
        + \big(3.11\log\disc_\K + (3.11\log |t| + 35)n_\K + 20\big)\delta^{\sigma-1/4} \\
      & + \Big(\frac{\delta^{\sigma-1}}{1-\sigma}
             + \frac{0.07}{2\sigma-1}
             + 4\Big)n_\K
\end{align*}
which we simplify to
\begin{align*}
\Big|\frac{\zeta'_\K}{\zeta_\K}(s)\Big|
\leq{}& \Big(\frac{\log(\frac{1}{2\sigma-1})}{\pi} + \frac{0.64}{2\sigma-1} + 0.82\Big)Q\delta^{\sigma-\tfrac{1}{2}}
       + \sqrt{2\pi} e^{-\frac{\pi}{2}|t|}\delta^{\sigma-1}
       + 3.11 Q\delta^{\sigma-1/4} \\
      &+ \Big(\frac{\delta^{\sigma-1}}{1-\sigma}
              + \frac{0.07}{2\sigma-1}
              + 4\Big)n_\K
\end{align*}
where $Q:= \log\disc_\K + (\log |t| + 20)n_\K + 11$ (thus $W_\K \leq Q -21.8n_\K -11$).\\
We get the claim by setting $\delta := Q^{-2}$ and with some minor simplifications which come from the
assumption $|t|\geq 10$ and the lower bound $Q\geq 33$.
\end{proof}

Finally, the inequality $f_\K(s)\leq \tilde{f}_\K(s)$ with the $\tilde{f}_\K(s)$ given in
Theorem~\ref{th:C1} follows from plugging the estimates of Lemmas~\ref{lem:C1} and~\ref{lem:C12}
in~\eqref{eq:C12}, and simplifying the resulting inequality using the bound $n_\K
\log\big(\frac{\sqrt{\sigma^2+t^2}}{2\pi}\big) + \frac{2\sigma}{\sigma^2+t^2} +
\frac{2\sigma-2}{(\sigma-1)^2+t^2} \leq n_\K\log t$ which holds when $\sigma \in (\tfrac{1}{2},1)$ and
$t\geq 1$.
%
%G my(u=0.5);ploth(t=1,100,log(t)-(log(abs(u+I*t)/2/Pi)+real(2/(u+I*t)+2/(u-1+I*t))))
%G my(u=0.6);ploth(t=1,100,log(t)-(log(abs(u+I*t)/2/Pi)+real(2/(u+I*t)+2/(u-1+I*t))))
%G my(u=0.7);ploth(t=1,100,log(t)-(log(abs(u+I*t)/2/Pi)+real(2/(u+I*t)+2/(u-1+I*t))))
%G my(u=0.8);ploth(t=1,100,log(t)-(log(abs(u+I*t)/2/Pi)+real(2/(u+I*t)+2/(u-1+I*t))))
%G my(u=0.9);ploth(t=1,100,log(t)-(log(abs(u+I*t)/2/Pi)+real(2/(u+I*t)+2/(u-1+I*t))))
%G my(u=1.0);ploth(t=1,100,log(t)-(log(abs(u+I*t)/2/Pi)+real(2/(u+I*t)+2/(u-1+I*t))))

\appendix
\section*{Appendix}
\setcounter{equation}{0}
\renewcommand{\theequation}{A.\arabic{equation}}
\subsection*{Proof of Lemma~\ref{lem:C1}}
Using the explicit formula~\cite[Th.~1.4.2]{AnAsRoy} for $\log\Gamma(s)$ coming from the Euler--Maclaurin
summation formula one gets:
\begin{align*}
\frac{\Gamma'}{\Gamma}(s) - \log\Big(s-\frac{1}{2}\Big)
&= -\log\Big(1-\frac{1}{2s}\Big) -\frac{1}{2s} - \frac{1}{12s^2} + \frac{1}{120s^4} + \int_{0}^{+\infty} \frac{B_4(\fracpart{u})}{(s+u)^5}\d u\\
&= \frac{1}{24s^2} + \sum_{n=3}^{+\infty} \frac{1}{n2^ns^n} + \frac{1}{120s^4} + \int_{0}^{+\infty} \frac{B_4(\fracpart{u})}{(s+u)^5}\d u
\end{align*}
where $B_4(x)= x^4-2x^3+x^2-\frac{1}{30}$. Thus, if $t$ is positive we get
\begin{align*}
\Ree\Big(\frac{\Gamma'}{\Gamma}&(s)-\log\Big(s - \frac{1}{2}\Big)\Big)
\leq  \frac{1}{24}\Ree\Big(\frac{1}{s^2}\Big) + \sum_{n=3}^{+\infty} \frac{1}{n2^n|s|^n} + \frac{1}{120|s|^4} + \int_{0}^{+\infty} \frac{|B_4(\fracpart{u})|}{|s+u|^5}\d u\\
\leq& \frac{1}{24}\Ree\Big(\frac{1}{s^2}\Big) + \frac{1}{12|s|^2(2|s|-1)} + \frac{1}{120|s|^4} + \frac{1}{30}\int_{0}^{+\infty} \frac{\d u}{(u^2+t^2)^{5/2}}\\
\leq& \frac{1}{24}\Ree\Big(\frac{1}{s^2}\Big) + \frac{1}{12|s|^2(2t-1)} + \frac{1}{120|s|^4} + \frac{1}{45t^4}.
\end{align*}
This is a rational function of $\sigma$ and $t$, and with elementary arguments one proves that it is
negative for $\sigma\geq 0$ and $t\geq \sigma+2$.
%G h(x,y)={my(s=x+I*y);1/24*real(1/s^2)+1/12/real(s*conj(s))/(2*y-1)+1/120/real(s*conj(s))^2+1/45/y^4}
%G ploth(x=0,100,h(x,x+2))
%G \\%88 = [0.E-307, 100.00000000000000000, -0.002100908211461257648, -2.0088966151587786804 E-8]

\subsection*{Proof of Lemma~\ref{lem:C2}}
The map $u\to \int_\R|\Gamma(u+iv)|\d v$ is log-convex for $u\in(-1,0)$ as a consequence of a general
inequality of Hardy, Ingham and P\'{o}lya (see~\cite[Ch.~11, Prop.~4]{NarasimhanNievergelt}), and the uniform
exponential decay of $\Gamma(u+iv)$ for $v\to\infty$. Hence,
\[
\int_\R|\Gamma(u+iy)|\d y \leq \max\Big\{\int_\R\Big|\Gamma\Big(-\frac{3}{4}+iy\Big)\Big|\d y,
                                         \int_\R\Big|\Gamma\Big(-\frac{1}{4}+iy\Big)\Big|\d y
                                   \Big\}
\]
for any $u\in[-3/4,-1/4]$. The last two integrals are bounded respectively by $4.43$ and $4.73$ (an
effective version of the Stirling bound as~\cite[Cor.~1.4.4]{AnAsRoy} may be used to prove that the
contribution of the range $y\in \R\backslash [-10,10]$ to the integral is smaller than $10^{-6}$, and
then, using the monotonicity of $y\mapsto|\Gamma(u+iy)|$, a Riemann sum with $10000$ points produces the
result). This proves~\eqref{eq:C25a}.
%
%G \\ |Gamma| \`{e} evidentemente decrescente ovunque (grafico), tuttavia usando la formula di Eulero-Mclaurin
%G \\ come in lem:D4 si verifica che |Gamma| \`{e} decrescente dopo 1.5E-3 perci\`{o} qui sotto abbiamo
%G \\ il test n<3.
%G my(u=-1/4,N=10000,ymax=10,eps=ymax/N);2*sum(n=0,N-1,if(n<3,abs(gamma(u)),abs(gamma(u+I*n*eps))))*eps
%G \\%47 = 4.726733179569813418245447377
%G my(u=-3/4,N=10000,ymax=10,eps=ymax/N);2*sum(n=0,N-1,if(n<3,abs(gamma(u)),abs(gamma(u+I*n*eps))))*eps
%G \\%48 = 4.427807467492724640912293777
%
\medskip\\
Without loss of generality we can assume $t>10$. By~\eqref{eq:C25a}, in order to prove~\eqref{eq:C25b} it is
sufficient to show that
\begin{equation}\label{eq:C39}
%\int_\R |\Gamma(u+i(t-y))|\log(1+|y|)\d y - \int_\R |\Gamma(u+i(t-y))|\d y \log(1+t)\\
%=
\int_\R |\Gamma(u+i(t-y))|\log\Big(\frac{1+|y|}{1+t}\Big)\d y
\end{equation}
is negative. Let
\begin{align*}
F_1(u,v) &:= -\int_{v}^{+\infty} |\Gamma(u+iw)|\d w,\\
F_2(u,v) &:= -\int_{v}^{+\infty} F_1(u,w)\d w
           =  \int_{v}^{+\infty} (w-v)|\Gamma(u+iw)|\d w,
\end{align*}
so that $\partial_v F_1(u,v) = |\Gamma(u+iv)|$ and $\partial_v F_2(u,v) = F_1(u,v)$.
We split~\eqref{eq:C39} into three ranges $(-\infty,0]\cup[0,t]\cup[t,+\infty)$, getting
\begin{multline*}
\int_\R
|\Gamma(u+i(t-y))|\log\Big(\frac{1+|y|}{1+t}\Big)\d y
   = \int_{-\infty}^0|\Gamma(u+i(t-y))|\log\Big(\frac{1-y}{1+t}\Big)\d y\\
   +  \int_0^t |\Gamma(u+i(t-y))|\log\Big(\frac{1+y}{1+t}\Big)\d y
   +  \int_t^{+\infty}|\Gamma(u+i(y-t))|\log\Big(\frac{1+y}{1+t}\Big)\d y
\end{multline*}
where in the last term we have used the equality $|\Gamma(u-iw)|=|\Gamma(u+iw)|$ to ensure the positivity
of the imaginary part of the argument of the gamma function. Then an integration by parts shows that it is
\begin{align*}
%   =& -F_1(u,t-y)\log\Big(\frac{1-y}{1+t}\Big)\Big|_{-\infty}^0
%     - \int_{-\infty}^0\frac{F_1(u,t-y)}{1-y}\d y\\
%    &-F_1(u,t-y)\log\Big(\frac{1+y}{1+t}\Big)\Big|_0^t
%     + \int_0^t\frac{F_1(u,t-y)}{1+y}\d y\\
%    &+ F_1(u,y-t)\log\Big(\frac{1+y}{1+t}\Big)\Big|_t^{+\infty}
%     -\int_t^{+\infty}\frac{F_1(u,y-t)}{1+y}\d y                     \\
%%
%   =& F_1(u,t)\log(1+t)
%     -F_1(u,t)\log(1+t)
%     - \int_{-\infty}^0\frac{F_1(u,t-y)}{1-y}\d y
%     + \int_0^t\frac{F_1(u,t-y)}{1+y}\d y
%     -\int_t^{+\infty}\frac{F_1(u,y-t)}{1+y}\d y                     \\
%%
  &= - \int_{-\infty}^0\frac{F_1(u,t-y)}{1-y}\d y
     + \int_0^t\frac{F_1(u,t-y)}{1+y}\d y
     - \int_t^{+\infty}\frac{F_1(u,y-t)}{1+y}\d y.
\intertext{A second integration by parts produces}
%
%   =& \frac{F_2(u,t-y)}{1-y}\Big|_{-\infty}^0
%   -  \int_{-\infty}^0\frac{F_2(u,t-y)}{(1-y)^2}\d y
%   -  \frac{F_2(u,t-y)}{1+y}\Big|_0^t
%   -  \int_0^t\frac{F_2(u,t-y)}{(1+y)^2}\d y
%   -  \frac{F_2(u,y-t)}{1+y}\Big|_t^{+\infty}
%   -  \int_t^{+\infty}\frac{F_2(u,y-t)}{(1+y)^2}\d y      \\
%%
%   =& F_2(u,t)
%   -  \frac{F_2(u,0)}{1+t}
%   +  F_2(u,t)
%   +  \frac{F_2(u,0)}{1+t}
%   -  \int_{-\infty}^0\frac{F_2(u,t-y)}{(1-y)^2}\d y
%   -  \int_0^t\frac{F_2(u,t-y)}{(1+y)^2}\d y
%   -  \int_t^{+\infty}\frac{F_2(u,y-t)}{(1+y)^2}\d y      \\
%
  &=  2F_2(u,t)
   -  \int_{-\infty}^0\frac{F_2(u,t-y)}{(1-y)^2}\d y
   -  \int_0^t\frac{F_2(u,t-y)}{(1+y)^2}\d y
   -  \int_t^{+\infty}\frac{F_2(u,y-t)}{(1+y)^2}\d y.
\intertext{Function $F_2$ being positive, this is}
  &\leq 2F_2(u,t) - \frac{1}{(1+t)^2}\int_0^t F_2(u,t-y)\d y
   =    2F_2(u,t) - \frac{1}{(1+t)^2}\int_0^t F_2(u,y)\d y
\end{align*}
which is negative if and only if
\begin{equation}\label{eq:C40}
2(1+t)^2F_2(u,t) \leq \int_0^t F_2(u,y)\d y.
\end{equation}
This inequality holds when $t$ is large enough because the left-hand side decreases to $0$ as a function
of $t$. In order to prove that this happens already for $t\geq 10$ we use an effective version of the
Stirling bound~(see~\cite[Cor.~1.4.4]{AnAsRoy}) giving, when $u<0$ and $v>0$,
%\[
%\log|\Gamma(u+iv)|
%= \Re(\log(\Gamma(u+iv)))
%= (u-\frac{1}{2})\log|u+iv|
%  - v (\frac{\pi}{2} + \arctan(|\frac{u}{v}|))
%  - u
%  + \log\sqrt{2\pi}
%  + R(u,v)
%\]
%e quindi
\[
|\Gamma(u+iv)|
= \sqrt{2\pi} |u+iv|^{u-1/2} e^{- \frac{\pi}{2} v} e^{-u + v\arctan(u/v)} e^{R(u,v)}
\]
with $|R(u,v)| \leq \frac{1}{8v}(\frac{\pi}{2}-\arctan(u/v))$.
%\[
%|R(u,v)| = \frac{1}{2}\Big|\int_0^{+\infty}\frac{\fracpart{w}^2-\fracpart{w}}{(u+w+iv)^2} \d w \Big|
%G \\ recall that we are assuming u<0 and v>0!
%\leq \frac{1}{8v}(\frac{\pi}{2}+\arctan(|u|/v))
%\]
Thus, if furthermore $u\in[-3/4,-1/4]$ and $v\geq 10$, we get
%\begin{align*}
%|F_1(u+iv)|
%&\leq \int_v^{+\infty} \sqrt{2\pi} |u+iw|^{u-1/2} e^{- \frac{\pi}{2} w} e^{-u - w\atan(|u/w|)} e^{R(u,w)} \d w\\
%%
%&\leq \sqrt{2\pi} v^{-3/4} e^{3/4} \int_v^{+\infty}  e^{\frac{\pi}{16w} + (\frac{1}{8w}-w)\atan(|u|/w)} e^{- \frac{\pi}{2} w} \d w\\
%%
%&\leq \sqrt{2\pi} v^{-3/4} e^{3/4} e^{\frac{\pi}{16v}}\int_v^{+\infty}  e^{(\frac{1}{8w}-w)\atan(|u|/w)} e^{- \frac{\pi}{2} w} \d w
%%
%\intertext{we are assuming $v\geq 10$, thus $\frac{1}{8w}-w < 0$, giving}
%%
%&\leq \sqrt{2\pi} v^{-3/4} e^{3/4} e^{\frac{\pi}{16v}}\int_v^{+\infty}  e^{- \frac{\pi}{2} w} \d w\\
%%
%&   = \frac{2}{\pi}\sqrt{2\pi} e^{3/4}v^{-3/4}  e^{\frac{\pi}{16v}} e^{- \frac{\pi}{2} v}.
%\end{align*}
%producing
\[
|F_2(u+iv)|
%&\leq \int_v^{+\infty} \frac{2}{\pi}\sqrt{2\pi} e^{3/4}w^{-3/4}  e^{\frac{\pi}{16w}} e^{- \frac{\pi}{2} w} \d w\\
%&\leq \frac{2}{\pi}\sqrt{2\pi} e^{3/4}v^{-3/4}  e^{\frac{\pi}{16v}}\int_v^{+\infty} e^{- \frac{\pi}{2} w} \d w\\
\leq  \frac{4}{\pi^2}\sqrt{2\pi} e^{3/4}v^{-3/4}  e^{\frac{\pi}{16v}} e^{- \frac{\pi}{2} v}.
\]
We get that $\forall v\geq 10$ the left-hand side of~\eqref{eq:C40} is lower than $2\cdot 10^{-5}$.
On the contrary,
\begin{align*}
\int_0^t F_2(u,y)\d y
%=   & \int_0^{+\infty} \int_{w_1}^{+\infty} \int_{w_2}^{+\infty} |\Gamma(u+iw_3)|\d w_3\d w_2\d w_1
&=     \frac{1}{2}\int_0^t w^2 |\Gamma(u+iw)|\d w
\geq \frac{1}{2}\int_0^{1} w^2 |\Gamma(u+iw)|\d w\\
&\geq \frac{1}{6}\min_{\substack{u\in[-3/4,-1/4]\\ w\in\times[0,1]}} |\Gamma(u+iw)|.
\end{align*}
The maximum modulus principle for holomorphic functions (applied to $1/\Gamma(z)$) shows that the minimum
is reached at the boundary of the region $[-3/4,-1/4]\times[0,1]$, and it is easy to verify that here
$|\Gamma(z)|$ is always larger than $0.4$ thus the right-hand side of~\eqref{eq:C40} is larger than
$0.06$.
%
%G ploth(v=0,1,[abs(gamma(-3/4+I*v)),abs(gamma(-1/4+I*v))])
%G \\%2 = [0.E-307, 1.0000000000000000000, 0.4208369321165639221, 4.901666809860710572]
%G ploth(u=-3/4,-1/4,[abs(gamma(u+I*0)),abs(gamma(u+I*1))])
%G \\%3 = [-0.7500000000000000000, -0.2500000000000000000, 0.4208369321165639221, 4.901666809860710572]
%
\medskip\\
For~\eqref{eq:C25c} we note that $|\Gamma(u+iv)| \leq 5.3 e^{-\pi |v|/2}$ for every $u\in[-3/4,-1/4]$ and
every $v$ (this can be proved as Lemma~\ref{lem:C7}),
thus it is sufficient to bound
\[
F(t) := \int_\R \frac{e^{-\frac{\pi}{2}|t-y|}}{|1+4iy|}\d y.
\]
It is the unique bounded solution of the differential equation $F''(t)-\frac{\pi^2}{4} F(t) =
-\frac{\pi}{|1+4it|}$. A numerical check shows that $F(10) = 0.032\ldots$ is larger than
$\tfrac{4/\pi}{|1+40i|} = 0.031\ldots$.
%
%G F(t)=intnum(y=-100,t,exp(-Pi/2*(t-y))/abs(1+4*I*y)) + intnum(y=t,100,exp(-Pi/2*(y-t))/abs(1+4*I*y))
%G F(10)
%G \\%4 = 0.03209322536063283412271475876
%G 4/Pi/abs(1+40*I)
%G \\%155 = 0.03182104609475111273148336344
%
Suppose that there exists $t_0>10$ with $F(t_0)> F(10)$. Then there exists also a point $t_1>t_0$ with
$F(t_1)= \max_{t\in[10,+\infty)}F(t)$, because $F(+\infty) = 0$. Then $F''(t_1)\leq 0$, because $F$ is a
$C^2$ map in $(10,+\infty)$. Thus from the differential equation we get
\[
\frac{\pi^2}{4} F(t_1) \leq \frac{\pi}{|1+4it_1|} < \frac{\pi}{|1+40i|} <  \frac{\pi^2}{4} F(10)
\]
which violates the definition of $t_1$. This proves that $F(10)= \max_{t\in[10,+\infty)}F(t)$, so that
\[
\int_\R \frac{|\Gamma(u+i(t-y))|}{|1+4iy|}\d y
\leq 5.3 F(t)
\leq 5.3 F(10)
%\leq 5.3\cdot 0.0321
\leq 0.171.
\]
\medskip\\
The same argument may be applied to prove~\eqref{eq:C25d}, since $F_\alpha(t) := \int_\R
\frac{e^{-\frac{\pi}{2}|t-y|}}{1+\alpha y^2}\d y$ is the unique solution of the differential equation
$F_\alpha''(t)-\frac{\pi^2}{4} F_\alpha(t) = -\frac{\pi}{1+\alpha y^2}$ which is bounded for
$t\to\pm\infty$, and $F_1(10) = 0.0129\ldots > \frac{4/\pi}{1+10^2} = 0.0126\ldots$, and $F_2(10) =
0.0065\ldots > \frac{4/\pi}{1+2\cdot10^2} = 0.0063\ldots$.
%
%G F(t,al)=intnum(y=-100,t,exp(-Pi/2*(t-y))/(1+al*y^2)) + intnum(y=t,100,exp(-Pi/2*(y-t))/(1+al*y^2))
%G F(10,1)
%G \\%9 = 0.01293490921406205099960205873
%G F(10,2)
%G \\%10 = 0.006502094562996112017131942872
%G F(10,16)
%G \\%11 = 0.0008166599592195254579026849312
%G my(t=10);4/Pi*[1/(1+1*t^2),1/(1+2*t^2)]
%G \\%12 = [0.01260633212609071966486208027, 0.006334525098184888985826219438]
%

%\bibliographystyle{amsplain}
%\bibliography{f:/books}
%\end{document}

\end{document}